\def\thetitle{A generalised Ramsey--Tur\'an problem for matchings}
\definecolor{CombinatoricaAqua}{HTML}{00698C}
\definecolor{CombinatoricaBlue}{HTML}{3A3293}
\definecolor{CombinatoricaBrown}{HTML}{66220C}
\definecolor{CombinatoricaRed}{HTML}{DF2A27}
\definecolor{HarvardCrimson}{rgb}{0.6471, 0.1098, 0.1882}
\definecolor{DAGreen}{HTML}{339900}
\let\reftagform@=\tagform@
\def\tagform@#1{\maketag@@@
	{(\ignorespaces\textcolor{CombinatoricaBrown}{#1}\unskip\@@italiccorr)}}
\renewcommand{\eqref}[1]{\textup{\reftagform@{\ref{#1}}}}
\Crefname{mainthm}{Theorem}{Theorems}
\Crefname{fact}{Fact}{Facts}
\Crefname{claim}{Claim}{Claims}
\Crefname{observation}{Observation}{Observations}
\Crefname{definition}{Definition}{Definitions}
\declaretheoremstyle[
spaceabove=\topsep, spacebelow=\topsep,
headfont=\color{CombinatoricaBrown}\normalfont\bfseries,
bodyfont=\itshape,
]{thm}
\declaretheoremstyle[
spaceabove=\topsep, spacebelow=\topsep,
headfont=\color{CombinatoricaBrown}\normalfont\bfseries,
bodyfont=\normalfont,
]{dfn}
\declaretheoremstyle[
spaceabove=0.5\topsep, spacebelow=0.5\topsep,
headfont=\color{CombinatoricaBrown}\normalfont\bfseries,
bodyfont=\normalfont,
]{rmk}
\declaretheorem[style=thm,parent=section]{theorem}
\declaretheorem[style=thm,sibling=theorem]{lemma}
\declaretheorem[style=thm,sibling=theorem]{corollary}
\declaretheorem[style=thm,sibling=theorem]{claim}
\declaretheorem[style=definition,sibling=theorem]{definition}
\renewcommand{\eprint}[1]{\href{https://arxiv.org/abs/#1}{arXiv:#1}}
\renewcommand{\PrintNames@a}[4]{%
	\PrintSeries{\name}
	{#1}
	{}{ and \set@othername}
	{,}{ \set@othername}
	{}{ and \set@othername}
	{#2}{#4}{#3}%
}
\def\mathcolor#1#{\@mathcolor{#1}}
\def\@mathcolor#1#2#3{%
	\protect\leavevmode
	\begingroup
	\color#1{#2}#3%
	\endgroup
}
\definecolor{Red}{rgb}{0.618,0,0}
\definecolor{Blue}{rgb}{0,0,1}
\definecolor{Green}{rgb}{0,0.298,0}
\title{\thetitle}
\author{
  Peter Keevash\thanks{
    Mathematical Institute,
    University of Oxford,
    Oxford, UK.
    Email: \href{mailto:keevash@maths.ox.ac.uk}
                {\tt keevash@maths.ox.ac.uk}.
    Supported by ERC Advanced Grant 883810.
  }
  \and
  Peleg Michaeli\thanks{
    Mathematical Institute,
    University of Oxford,
    Oxford, UK.
    Email: \href{mailto:michaeli@maths.ox.ac.uk}
                {\tt michaeli@maths.ox.ac.uk}.
    Supported by ERC Advanced Grant 883810.
  }
}
\def\namedlabel#1#2{\begingroup
  #2%
  \def\@currentlabel{#2}%
  \phantomsection\label{#1}\endgroup
}
\newcommand{\defn}[1]{{\bfseries #1}}
\renewcommand{\phi}{\varphi}
\newcommand{\NN}{\mathbb{N}}
\newcommand{\ZZ}{\mathbb{Z}}
\newcommand{\cA}{\mathcal{A}}
\newcommand{\cG}{\mathcal{G}}
\newcommand{\cH}{\mathcal{H}}
\newcommand{\cK}{\mathcal{K}}
\newcommand{\cP}{\mathcal{P}}
\newcommand{\cX}{\mathcal{X}}
\newcommand{\cY}{\mathcal{Y}}
\newcommand{\sm}{\smallsetminus}
\newcommand{\es}{\varnothing}
\newcommand{\floor}[1]{\left\lfloor{#1}\right\rfloor}
\newcommand{\vect}{\mathbf}
\DeclareMathOperator{\ex}{ex}
\newcommand{\rt}{\mathbf{RT}}
\newcommand{\grt}{\mathbf{GRT}}
\newcommand{\nto}{\not\to}
\DeclareMathOperator{\link}{link}
\DeclareMathOperator{\unclr}{uncolour}
\newcommand{\vt}{\mathbf{t}}
\newcommand{\tmax}{\|\vt\|_\infty}
\newcommand{\one}{\mathbf{1}}
\pgfplotsset{compat=1.16}
\newcommand{\GE}{\mathsf{GE}}
\newcommand{\mns}{\nu_\Sigma} 
\begin{document}

\maketitle

\begin{abstract}
We prove a generalised Ramsey--Tur\'an theorem for matchings, which 
(a) simultaneously generalises the Cockayne--Lorimer Theorem (Ramsey for matchings)
and the Erd\H{o}s--Gallai Theorem (Tur\'an for matchings),
and (b) is a generalised Tur\'an theorem in the sense that we can optimise
the count of any clique (Tur\'an-type theorems optimise the count of edges).
More precisely, for integers $q \ge 1$, $n \ge \ell \ge 2$, and $t_1,\dots,t_q \ge 1$
we determine the maximum number of $\ell$-vertex complete subgraphs
in an $n$-vertex graph that admits a $q$-edge-colouring in which,
for each $j=1,\dots,q$, the $j$-coloured subgraph has no matching of size $t_j$.
We achieve this by identifying two explicit constructions 
and applying a compression argument to show that 
one of them achieves the maximum. Our compression algorithm
is quite intricate and introduces methods that have not previously 
been applied to these types of problems:
it employs an optimisation problem
defined by the Gallai--Edmonds decompositions of each colour.
\end{abstract}

\section{Introduction}
Extremal Combinatorics is a pillar of Discrete Mathematics with applications to many other fields, 
particularly Theoretical Computer Science, Geometry and Number Theory,
see~\cite{sudakov2010recent}.
Many results of the field can be classified as Ramsey or Tur\'an Theorems, 
which have a rich history dating to the early 20th century,
see~\cites{conlon2015recent,mubayi2020survey,keevash2011hypergraph,mubayi2016survey}.
A hybrid of these two research directions proposed by Erd\H{o}s and S\'os in the 1960s
is now known as Ramsey--Tur\'an Theory, see~\cite{SS01}. Another generalisation of Tur\'an problems,
known simply as Generalised Tur\'an Problems~\cite{GP25+}, developed from a generalisation
of Tur\'an's Theorem proved in the 1960s by  Erd\H{o}s~\cite{erdos1962number}.
Our paper concerns a common generalisation
of Ramsey--Tur\'an Theory and Generalised Tur\'an Problems,
which we will now define.

Given graphs $T$ and $G$, we write $m_T(G)$ for 
the number of unlabelled copies of $T$ in $G$.
Given graphs $H_1,\dots,H_q$, we write $G\to (H_1,\dots,H_q)$
if every $q$-edge-colouring of $G$ has a $j$-coloured copy of $H_j$ for some $j\in[q]:=\{1,\dots,q\}$.
We define the \defn{Generalised Ramsey--Tur\'an number} by
\[
  \grt_T(n\to(H_1,\dots,H_q))
  = \max\{m_T(G)\ :\ G\nto(H_1,\dots,H_q),\ |V(G)|=n\}.
\]
We also write $m_\ell=m_{K_\ell}$, $\grt_\ell=\grt_{K_\ell}$
and $tK_2$ for a matching of size $t$.

Our first result is structural, showing that
$\grt_\ell(n\to(t_1K_2,\dots,t_qK_2))$ is achieved by a graph
from a certain concrete family, which we will now define.
Let $G_{n,x,y}$ be an $n$-vertex graph on $X\cup Y\cup Z$,
where $X,Y,Z$ are pairwise disjoint, $|X|=x$ and $|Y|=y$,
and in which $\{u,v\}$ is an edge if and only if
either $u,v\in X$ or $\{u,v\}\cap Y\ne\es$.
In other words, $G_{n,x,y}$ is the join of $K_y$
with the disjoint union of $K_x$ and $n-x-y$ isolated vertices.
We call such $G$ a \defn{clique-cone graph},
with \defn{clique set} $X$ and  \defn{cone set} $Y$.
Write $\NN$ for the set of nonnegative integers
and $\NN_+ = \NN \sm \{0\}$.
For $\vt=(t_1,\dots,t_q)\in\NN_+^q$,
we use $\vt K_2$ as an abbreviation for $(t_1K_2,\dots,t_q K_2)$.
\begin{theorem}\label{thm:max}
  For any integers $n\ge\ell\ge 2$, $q \ge 1$ and $\vt\in\NN_+^q$, 
  there exist $1\le x\le 2\tmax-1$ and $0\le y\le n-x$
  such that $G_{n,x,y}\nto\vt K_2$
  and $m_\ell(G_{n,x,y})=\grt_\ell(n\to\vt K_2)$.
\end{theorem}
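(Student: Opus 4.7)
The plan is to prove this via a compression argument, following the approach sketched in the abstract. Let $G^*$ be an extremal graph on $n$ vertices together with a $q$-edge-colouring $c^*$ witnessing $G^* \nto \vt K_2$ and maximising $m_\ell(G^*)$. I would define a family of local modifications --- edge swaps, vertex-role changes, and recolourings --- each producing a new pair $(G,c)$ with $G \nto \vt K_2$ and $m_\ell(G) \ge m_\ell(G^*)$, while strictly decreasing a carefully chosen potential. Extremality and the strict potential decrease then force $G^*$ to already be a fixed point of the procedure, and the main task is to show that any such fixed point must be a clique-cone graph $G_{n,x,y}$ for some $1 \le x \le 2\tmax - 1$ and $0 \le y \le n-x$.

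The key technical device is to track, for each colour $j \in [q]$, the Gallai--Edmonds decomposition $V(G) = D_j \sqcup A_j \sqcup C_j$ of the $j$-coloured subgraph $G_j$. This encodes the Tutte--Berge identity $\nu(G_j) = \tfrac{1}{2}\bigl(n - c(G_j[D_j]) + |A_j|\bigr) \le t_j - 1$, where $c(\cdot)$ counts components, and tells us precisely which edges of colour $j$ may be added, removed, or rerouted without violating the matching bound; in particular, edges may typically be shifted from $C_j$-vertices toward $A_j$-vertices, which lie in more $\ell$-cliques. Such shifts drive the compression.

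To combine these $q$ local rules, I would encode the data --- a labelling of each vertex by its role in each of the $q$ decompositions, together with an edge set compatible with all of these roles --- as feasible solutions of an explicit optimisation problem whose objective is $m_\ell$ and whose constraints are the $q$ Tutte--Berge inequalities. The crux is to analyse the extrema of this optimisation and show that any optimum concentrates the vertices into just three combined roles: one yielding a clique $X$, one yielding a universal cone $Y$, and one yielding an independent remainder $Z$. This is precisely the structure of a clique-cone graph $G_{n,x,y}$. The bound $x \le 2\tmax - 1$ is then forced by pigeonhole, since any clique on $2\tmax$ vertices admits a monochromatic $\tmax$-matching in some colour.

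The main obstacle is the coupling between the $q$ Gallai--Edmonds decompositions: a compression that is harmless for colour $j$ may shift a vertex of $G_{j'}$ from $A_{j'}$ into $D_{j'}$ and inflate $\nu(G_{j'})$ past $t_{j'} - 1$. A naive greedy approach cannot control all $q$ decompositions simultaneously, which is precisely why the formal optimisation framework --- in which all $q$ matching constraints appear as linear inequalities on the joint role-assignment --- is needed, and why the authors describe their compression as intricate.
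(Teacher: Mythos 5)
Your proposal is an outline of the right kind of strategy---compression guided by the Gallai--Edmonds decompositions of the colour classes, coupled through an optimisation problem---and in spirit this matches what the paper does. But as written it is a plan, not a proof: none of the load-bearing steps is carried out. The local modifications, the potential function, and the optimisation problem are never defined, and the central claim---that any fixed point of the (unspecified) procedure concentrates the vertices into the three roles $X$, $Y$, $Z$ of a clique-cone graph---is precisely the theorem to be proved and is asserted without argument. The assertion that edges "may typically be shifted from $C_j$-vertices toward $A_j$-vertices, which lie in more $\ell$-cliques" is neither made precise nor justified. Most importantly, you identify the real obstacle (a modification harmless for colour $j$ can increase $\nu(G_{j'})$ past $t_{j'}-1$) but offer only the phrase "formal optimisation framework" in place of a mechanism for overcoming it. The paper's resolution is quite specific: it first completes each colour without changing its matching number, then \emph{uncolours} all edges at a $\sigma$-maximal set $T$, proves that the sum of the matching numbers drops by at least $|T|$, peels the remaining hyperforest of cliques by dissolution/merging (with delicate binomial inequalities certifying that $m_\ell$ does not decrease), and only at the very end recolours the uncoloured edges using the accumulated budget to restore $\nu(G_j)\le t_j-1$ for every $j$. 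Nothing in your outline plays the role of this uncolouring/budget/recolouring accounting, and without it the simultaneous control of all $q$ matching constraints is exactly the part left unproved.

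The one concrete step you do justify is wrong: you claim $x\le 2\tmax-1$ follows by pigeonhole because "any clique on $2\tmax$ vertices admits a monochromatic $\tmax$-matching in some colour". This is false for $q\ge 2$: by the Cockayne--Lorimer Theorem, $R(\vt K_2)=\tmax+\Lambda_\vt+1$, which exceeds $2\tmax$ as soon as some other $t_i\ge 2$; concretely, for $t_1=t_2=2$ one can $2$-colour $K_4$ as a triangle plus the star at the fourth vertex, and neither colour contains $2K_2$. More to the point, clique-cone graphs with clique set much larger than $2\tmax-1$ (e.g.\ $K_n$ itself, i.e.\ $x=n$, for $n\le\tmax+\Lambda_\vt$) do admit $\vt K_2$-free colourings, so the bound on $x$ cannot be deduced from "fixed point plus valid colouring" alone. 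In the paper it comes from the structure produced by the algorithm: after distilling, the surviving clique carries edges of a single colour $j$, so $x=2\kappa+1$ with $\kappa\le\nu(G_j)\le t_j-1\le\tmax-1$. So the proposal, while pointing in the right direction, has genuine gaps both in the main structural argument and in the one step it attempts to argue explicitly.
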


Our second theorem gives an explicit formula for $\grt_\ell(n\to\vt K_2)$. We write
\begin{equation}\label{eq:phi}
  m_\ell(G_{n,x,y})=\phi_{\ell,n}(x,y)=\binom{x+y}{\ell}+\binom{y}{\ell-1}(n-x-y).
\end{equation}
We also write $\one_q=(1,\dots,1)\in\NN_+^q$
and $\Lambda_\vt=\|\vt-\one_q\|_1 = \sum_{i=1}^q (t_i-1)$ for $\vt\in\NN_+^q$.
\begin{theorem}\label{thm:grt}
  For all $\vt\in\NN_+^q$ and $n\ge\max\{\ell,\tmax+\Lambda_\vt\}$ we have
  \[
    \grt_\ell(n\to\vt K_2)
    = \max\{
        \phi_{\ell,n}(1,\Lambda_\vt),
        \phi_{\ell,n}(2\tmax-1,\Lambda_\vt-\tmax+1)
      \}.
  \]
  Also, $\grt_\ell(n\to \vt K_2)=0$ for $n<\ell$
  and $\grt_\ell(n\to \vt K_2)=\binom{n}{\ell}$ for $\ell \le n \le \tmax+\Lambda_\vt$.
\end{theorem}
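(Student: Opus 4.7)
The boundary cases are immediate: for $n<\ell$ no graph on $n$ vertices contains $K_\ell$, so $\grt_\ell(n\to\vt K_2)=0$; and for $\ell\le n\le \tmax+\Lambda_\vt$, the classical Cockayne--Lorimer theorem gives $K_n\nto\vt K_2$, so taking $G=K_n$ attains $\grt_\ell(n\to\vt K_2)=\binom{n}{\ell}$.

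For the main case $n\ge\tmax+\Lambda_\vt$, I would first exhibit explicit colourings of the two advertised configurations $G_{n,1,\Lambda_\vt}$ and $G_{n,2\tmax-1,\Lambda_\vt-\tmax+1}$ showing that both are $\nto\vt K_2$; this provides the lower bound. Both colourings are variants of the Cockayne--Lorimer construction. For the second, fix $j^\star$ with $t_{j^\star}=\tmax$, identify $X$ with the ``big'' block of size $2\tmax-1$, partition $Y$ into blocks $\{V_j\}_{j\ne j^\star}$ of sizes $t_j-1$, colour $X\cup Y$-edges by the Cockayne--Lorimer rule with $X$ placed first, and assign each $Z$-to-$V_j$ edge the colour $j$. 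For the first, partition $Y$ into blocks $\{Y_j\}$ of sizes $t_j-1$, colour every edge incident to $Y_j$ with colour $j$ (tiebreaking arbitrarily inside $Y$), and let the unique $X$-vertex inherit the colour from its $Y$-neighbour. In both cases, each colour-$j$ subgraph admits a vertex cover of size less than $t_j$, hence has matching less than $t_j$.

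For the upper bound, invoke \cref{thm:max} to write $\grt_\ell(n\to\vt K_2)=\phi_{\ell,n}(x^\star,y^\star)$ for some $(x^\star,y^\star)$ with $1\le x^\star\le 2\tmax-1$, $0\le y^\star\le n-x^\star$, and $G_{n,x^\star,y^\star}\nto\vt K_2$. Feasibility yields two necessary conditions: the \emph{clique bound} $x^\star+y^\star\le\tmax+\Lambda_\vt$ (since $K_{x^\star+y^\star}\nto\vt K_2$) and the \emph{matching bound} $\nu(G_{n,x^\star,y^\star})\le\Lambda_\vt$ (by partitioning any maximum matching among the colour classes, each of which contributes fewer than $t_j$ edges). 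A short computation gives $\nu(G_{n,x,y})=\min\{y+\lfloor x/2\rfloor,\lfloor n/2\rfloor\}$, so in the main regime $n\ge 2\Lambda_\vt+2$ the matching bound forces $y^\star+\lfloor x^\star/2\rfloor\le\Lambda_\vt$. Since $\phi_{\ell,n}(x,y)$ is non-decreasing in both coordinates (verified by a direct Pascal identity), the optimum lies on the boundary $y=\Lambda_\vt-\lfloor x/2\rfloor$ with $x$ odd (an even $x$ is weakly dominated by $x+1$). Parameterising $x=2k+1$, $y=\Lambda_\vt-k$ for $k\in\{0,\ldots,\tmax-1\}$ and setting $f(k):=\phi_{\ell,n}(2k+1,\Lambda_\vt-k)$, a direct computation of the second discrete difference yields
\[
  \Delta^2 f(k)=\binom{\Lambda_\vt+k+1}{\ell-2}+2\binom{\Lambda_\vt-k-2}{\ell-2}+(n-1-\Lambda_\vt-k)\binom{\Lambda_\vt-k-2}{\ell-3},
\]
a sum of non-negative binomial coefficients. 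Hence $f$ is discretely convex on $\{0,\ldots,\tmax-1\}$ and attains its maximum at $k=0$ (giving $\phi_{\ell,n}(1,\Lambda_\vt)$) or $k=\tmax-1$ (giving $\phi_{\ell,n}(2\tmax-1,\Lambda_\vt-\tmax+1)$).

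The step I expect to be the main obstacle is the narrow small-$n$ window $\tmax+\Lambda_\vt\le n\le 2\Lambda_\vt+1$, where the matching bound degenerates to the automatic $\lfloor n/2\rfloor\le\Lambda_\vt$ and does not by itself imply $y^\star+\lfloor x^\star/2\rfloor\le\Lambda_\vt$. Here I would supplement the matching argument with an Erd\H{o}s--Gallai edge-count bound, namely that each colour-$j$ class has at most $\max\bigl(\binom{2t_j-1}{2},\,\binom{t_j-1}{2}+(t_j-1)(n-t_j+1)\bigr)$ edges; comparing the sum over $j$ with the total edge count $\binom{x}{2}+\binom{y}{2}+y(n-y)$ of $G_{n,x,y}$ excludes the ``large-$y$'' configurations (such as $G_{n,1,\tmax+\Lambda_\vt-1}$) that evade the coarser matching bound, and the finitely many surviving cases are then verified by direct comparison with the two advertised corners.
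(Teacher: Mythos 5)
Your route is essentially the paper's: boundary cases via Cockayne--Lorimer, the two constructions for the lower bound, and for the upper bound an appeal to \cref{thm:max} followed by restriction to the boundary $y=\Lambda_\vt-\floor{x/2}$ and a discrete convexity argument for $f(k)=\phi_{\ell,n}(2k+1,\Lambda_\vt-k)$, maximised at $k=0$ or $k=\tmax-1$; your second-difference formula is correct and is equivalent to \cref{cl:g:convex}, which shows the first difference is increasing. (One slip in the lower bound: in the dense construction the block $X$ must come \emph{last} in the Cockayne--Lorimer ordering, so that colour $j^\star$ consists only of edges inside $X$; with $X$ ``placed first'' the colour-$j^\star$ class contains the join of $X$ with $Y$, whose matching number is $\floor{(\tmax+\Lambda_\vt)/2}\ge\tmax$ whenever $\Lambda_\vt\ge\tmax$, so that colouring is not $\vt K_2$-free.)

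The genuine gap is exactly where you predicted it, and your proposed patch does not close it. In the window $\tmax+\Lambda_\vt\le n\le 2\Lambda_\vt+1$ you suggest per-colour Erd\H{o}s--Gallai edge bounds and then ``direct comparison with the two advertised corners''. Take $\vt=(2,2)$, $\ell=2$, $n=5$ (so $\tmax=2$, $\Lambda_\vt=2$, $n=2\Lambda_\vt+1>\tmax+\Lambda_\vt$) and $(x,y)=(2,2)$. This point passes the clique bound ($x+y=4$), the matching bound ($\nu(G_{5,2,2})=\floor{5/2}=2=\Lambda_\vt$), and the edge-count test ($e(G_{5,2,2})=8$, while each colour class may have up to $4$ edges, so $8\le 4+4$). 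Yet $\phi_{2,5}(2,2)=8$ exceeds both corners $\phi_{2,5}(1,2)=\phi_{2,5}(3,1)=7$, so comparison with the corners cannot dispose of it; the only way out is to prove $G_{5,2,2}\to(2K_2,2K_2)$, which needs a structural argument (a graph with matching number $1$ is a star or a triangle, and two such graphs cannot cover $G_{5,2,2}$ because the edge inside $X$ meets neither vertex of $Y$), not a counting one. In general, in this window you must show directly that every clique-cone graph with $y+\floor{x/2}>\Lambda_\vt$ arrows $\vt K_2$, and neither the matching bound nor edge counts yield this; also ``finitely many surviving cases'' is not a uniform statement, since the window and the surviving configurations grow with $\vt$. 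To your credit, you have put your finger on a point the paper treats very lightly: \cref{cl:A} uses $\nu(G_{n,x,y})=y+\floor{x/2}$ without the $\floor{n/2}$ truncation, which is only valid when $n\ge x+2y$ (and at $n=\tmax+\Lambda_\vt$ the inclusion $\cA\subseteq\cA_0$ even fails, harmlessly, at $(1,n-1)$). So the subtlety is real, but as it stands this part of your argument is missing.
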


As discussed below, the $\ell=2$ case of this theorem simultaneously generalises
the Erd\H{o}s--Gallai Theorem (Tur\'an for matchings)~\cite{EG61}
and the Cockayne--Lorimer Theorem (Ramsey for matchings)~\cite{CL75}.
In the non-trivial case $n\ge\max\{\ell,\tmax+\Lambda_\vt\}$,
it identifies the extremal graph for $\grt_\ell(n\to\vt K_2)$
as $G_{n,1,\Lambda_\vt}$ or $G_{n,2\tmax-1,\Lambda_\vt-\tmax+1}$,
whichever yields a larger count of $K_\ell$'s.

For these to be candidate graphs $G$ for $\grt_\ell(n\to\vt K_2)$,
we must also provide colourings
demonstrating $G \nto\vt K_2$, which we do below, after some further definitions.
A \defn{$q$-multi-colouring} of a graph $G=(V,E)$
is a sequence $\cG=(G_1,\dots,G_q)$
where $G_j=(V,E_j)$ is a simple graph for all $j\in[q]$,
and $\bigcup_j E_j=E$.
We call $G$ the \defn{underlying graph} 
of the \defn{coloured graph} $\cG$.
An edge $e$ is said to have colour $j$ if $e\in E_j$.
Note that we allow an edge to possess multiple colours
and that in the definition of $\grt$ it is equivalent 
to consider multi-colourings or usual colourings,
where each edge has exactly one colour.
We will therefore abuse terminology and also
refer to $q$-multi-colourings as \defn{$q$-colourings}.
We say that $\cG$ is $(H_1,\dots,H_q)$-free if $G_j$ is $H_j$-free for all $j\in[q]$.
The \defn{matching number} $\nu(G)$ of $G$
is the size of a maximum matching in $G$.
Write $\nu(\cG)=(\nu(G_1),\dots,\nu(G_q))$,
and observe that $G\nto\vt K_2$
if and only if there exists a $q$-colouring $\cG$ of $G$
which is $\vt K_2$-free, i.e.~$\nu(\cG)\le\vt-\one_q$,
with inequalities between vectors understood pointwise.

\begin{figure}[t]
\captionsetup{width=0.879\textwidth,font=small}
    \centering
    \begin{subfigure}[b]{0.45\textwidth}
        \centering
        \includegraphics[width=\textwidth]{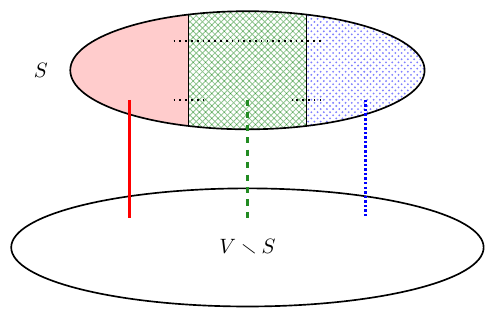}
        \subcaption{Sparse construction.}
        \label{fig:csparse}
    \end{subfigure}
    \hfill
    \begin{subfigure}[b]{0.45\textwidth}
        \centering
        \includegraphics[width=\textwidth]{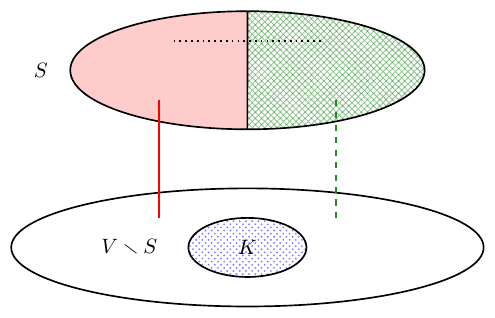}
        \subcaption{Dense construction.}
        \label{fig:cdense}
    \end{subfigure}
    \caption{Extremal constructions ($q=3$).
      The thin dotted lines correspond to multi-coloured edges.}
    \label{fig:const}
\end{figure}

\paragraph{Sparse construction}
Let $\vt=(t_1,\dots,t_q)\in\NN_+^q$
and $n\ge\max\{\ell,\tmax+\Lambda_\vt\}$.
Consider the following $q$-colouring $\cG=(G_1,\dots,G_q)$
of $G = G_{n,1,\Lambda_\vt}$. Partition the cone set $Y$
into sets $Y_i$ of size $|Y_i|=t_i-1$ for $1 \le i \le q$.
We let each $G_i$ consist of all edges incident to $Y_i$; see \cref{fig:csparse}.
Then $m_\ell(G)=\phi_{\ell,n}(1,\Lambda_\vt)$
and $\nu(\cG)\le\vt-\one_q$, so $G \nto \vt K_2$.

\paragraph{Dense construction}
Let $\vt=(t_1,\dots,t_q)\in\NN_+^q$
and $n\ge\max\{\ell,\tmax+\Lambda_\vt\}$.
Consider the following $q$-colouring $\cG=(G_1,\dots,G_q)$
of $G = G_{n,2\tmax-1,\Lambda_\vt-\tmax+1}$.
Let $X$ be the clique set and $Y$ be the cone set.
We assume without loss of generality that $t_q=\tmax$
and partition $Y$ into sets $Y_j$ of size $|Y_j|=t_j-1$ for $1 \le j \le q-1$.
For $1 \le j \le q-1$ we let $G_j$ consist of all edges incident to $Y_j$,
and we let $G_q$ consist of all edges contained in $X$; see \cref{fig:cdense}.
Then $m_\ell(G)=\phi_{\ell,n}(2\tmax-1,\Lambda_\vt-\tmax+1)$
and $\nu(\cG)\le\vt-\one_q$, so $G \nto \vt K_2$.

~

Note that the above two constructions do not depend on $\ell$,
although which of them achieves $\grt_\ell(n\to\vt K_2)$ does depend on $\ell$.
When there is a single colour ($q=1$) we obtain the two extremal constructions
for the Erd\H{o}s--Gallai Theorem~\cite{EG61}.
When $n=\tmax+\Lambda_\vt$ we obtain a $\vt K_2$-free $q$-colouring of $K_n$
that is extremal for the Cockayne--Lorimer Theorem~\cite{CL75};
this construction establishes the last statement in \Cref{thm:grt}.

\subsection{Discussion and related results}
Let us place our result within the wider context of 
Tur\'an, Ramsey, and Ramsey--Tur\'an problems for matchings.
The \defn{Tur\'an number} $\ex(n,H)$
is the maximum number of edges in an $H$-free graph on $n$ vertices.
Thus $\ex(n,H)=\grt_2(n\to (H))$. 
The Erd\H{o}s--Gallai Theorem~\cite{EG61} states that
\begin{equation}\label{eq:h:T}
  \grt_2(n\to (tK_2)) =\ex(n,tK_2)
  = \max\left\{
    \phi_{2,n}(1,t-1),
    \phi_{2,n}(2t-1,0)
  \right\}.
\end{equation}
This is the case  $q=1$, $\ell=2$, $\vt=(t)$ of
our formula in \cref{thm:grt}.

The \defn{generalised Tur\'an number} $\ex(n,T,H)$ 
is the maximum number of unlabelled copies of $T$
in an $H$-free graph on $n$ vertices.
Thus $\ex(n,T,H)=\grt_T(n\to (H))$. 
When $T$ and $H$ are cliques, this number
was determined by Erd\H{o}s~\cite{erdos1962number},
thus generalising Tur\'an's Theorem.
The general study of $\ex(n,T,H)$ was initiated by
Alon and Shikhelman~\cite{AS16},
and it now has a substantial literature, surveyed in~\cite{GP25+}.
For matchings, Wang et al.~\cite{Wan20,DNPWY20}
generalised \eqref{eq:h:T} to
\begin{equation}\label{eq:h:GT}
   \grt_\ell(n\to (tK_2)) = \ex(n,K_\ell,tK_2)
  = \max\left\{
    \phi_{\ell,n}(2t-1,0),
    \phi_{\ell,n}(1,t-1)
  \right\}.
\end{equation}
Gerbner~\cite{Ger24+} recently obtained further results
replacing $T=K_\ell$ by more general graphs $T$.

The \defn{Ramsey number} of $H_1,\dots,H_q$, denoted $R(H_1,\dots,H_q)$,
is the smallest integer $n$ such that for every $q$-edge-colouring of $K_n$
there is a copy of $H_j$ in colour $j$ for some $j \in [q]$.
Thus $R(H_1,\dots,H_q)$ is the smallest integer $n$
such that $\grt_T(n\to(H_1,\dots,H_q))<m_T(K_n)$,
for any nonempty graph $T$.
For matchings, where $H_j=t_jK_2$ for $\vt=(t_1,\dots,t_q)\in\NN_+^q$,
the Cockayne--Lorimer Theorem~\cite{CL75} determines
$R(\vt K_2) = \tmax+\Lambda_\vt+1$ for all $\vt$.
Equivalently, $n_0=\tmax+\Lambda_\vt+1$
is the smallest integer for which $\grt_2(n_0 \to \vt K_2) < \binom{n_0}{2}$,
which follows from the $\ell=2$ case of \cref{thm:grt};
indeed, for $n<n_0$ we have $\grt_2(n\to \vt K_2) = \binom{n}{2}$,
whereas $\grt_2(n\to \vt K_2)$ is achieved by
a non-complete graph for $n \ge n_0$.

The \defn{Ramsey--Tur\'an number} $\rt(n;H_1,\dots,H_q,\alpha)$
is the maximum number of edges in an $n$-vertex graph $G$ with 
$G\not\to(H_1,\dots,H_q)$ and independence number $\alpha(G) \le \alpha$.
Thus $\grt_2(n\to(H_1,\dots,H_q))=\rt(n;H_1,\dots,H_q,n+1)$;
one could consider a further generalisation of $\grt$ with a nontrivial
constraint on $\alpha$, but we will not pursue this here.
Starting with Erd\H{o}s and S\'os in the 1960s, this problem
was studied in many papers of Erd\H{o}s et al., surveyed in~\cite{SS01},
and more recently in~\cites{BL13,BHS15}. For matchings,
Omidi and Raeisi~\cite{OR24} showed that
\begin{equation}\label{eq:h:RT}
  \grt_2(n\to \vt K_2) = \max\left\{
      \phi_{2,n}(1,\Lambda_\vt),
      \phi_{2,n}(2\tmax-1,\Lambda_\vt-\tmax+1)
  \right\}.
\end{equation}

As one might expect given the above definitions, 
in {\em generalised Ramsey--Turán theory},
introduced by  Balogh, Liu, and Sharifzadeh~\cite{BLS17},
the objective shifts from counting edges to counting copies of some given graph.
Our result, \cref{thm:grt}, provides such a generalisation of  \cref{eq:h:GT,eq:h:RT},
showing that the maximum is determined by one of the same two extremal graphs as in the $\ell=2$ case,
although which is the maximiser depends on $\ell$.

We emphasise that our generalisation to $\ell > 2$ requires a fundamentally new approach,
as the proof technique used in~\cite{OR24} for $\ell=2$ does not extend.
The main difficulty arises from the trivial fact that in coloured graphs
all copies of $K_2$ are monochromatic but this fails for $K_\ell$ with $\ell>2$,
so one cannot count $K_\ell$'s by considering each colour separately.
The proof in~\cite{OR24} uses the Cockayne--Lorimer Theorem as a black box,
whereas our proof contains a proof for Cockayne--Lorimer as a special case
(a similar proof appears in~\cite{XYZ20}). The key to our proof is a novel compression algorithm
that is quite intricate and introduces methods that have not previously 
been applied to these types of problems:
it employs an optimisation problem
defined by the Gallai--Edmonds decompositions of each colour.

\subsection{Proof outline}
Here we outline our strategy for proving our theorems.
Given an arbitrary $\vt K_2$-free coloured graph,
we apply a sequence of {\em compressions},
each producing a new coloured graph, so that
(a) the count of $K_\ell$'s in the underlying graph does not decrease,
and (b) the matching number for each individual colour does not increase.
Thus throughout the process we maintain a $\vt K_2$-free coloured graph
with no fewer $K_\ell$'s than the original. We find a sequence of such compressions
that terminates with a clique-cone graph, thus showing that  an extremal graph 
can always be found within this specific family, as claimed by \cref{thm:max}.
There are three stages to the main compression algorithm.
\begin{enumerate}
\item
Given a coloured graph, we first simplify the structure of each colour class individually
by adding edges in a controlled manner determined by its
Gallai--Edmonds decomposition (see \cref{sec:GE}),
without changing its matching number (see \cref{alg:CAD:comp,alg:CA:trans,alg:AD:comp}).
\item
Next we select a certain subset $T$ of vertices, where for each $x \in T$ 
we remove all coloured edges incident to $x$ 
and add {\em uncoloured} edges from $x$ to all other vertices.
We choose $T$ to solve a certain optimisation problem
defined by the Gallai--Edmonds decompositions of the coloured graphs.
This can be intuitively understood as removing sets that are too dense;
after uncolouring edges at $T$, the hypergraph of cliques in the remaining coloured graphs
has a forest-like structure that can be exploited in the third stage.
Furthermore, the sum of matching numbers over all colours
decreases by at least $|T|$, which will later allow us to recolour
the uncoloured edges while still ensuring that the colouring remains $\vt K_2$-free.
\item
We iteratively simplify the forest-like structure of cliques
by ``peeling'' it from its leaves (see \cref{alg:peel}).
At each iteration, we can remove a leaf clique
by one of the following two methods.
If there exists a clique of a different colour which is no smaller,
then we can {\em dissolve} the leaf (see \cref{alg:dissolve}) 
by adding about half of its vertices to $T$
and isolating its remaining vertices
(which are chosen not in any other clique).
If no such clique exists, then we can {\em merge} the leaf
(see \cref{alg:D:merge}) into another clique of the same colour.
This peeling procedure preserves the forest-like structure,
allowing the process to continue iteratively
until we are left with at most one nontrivial clique.
This final clique is the clique set of the resulting clique-cone graph,
while the set of uncoloured vertices forms its cone set.
\end{enumerate}
The above algorithm shows that some clique-cone graph is extremal,
thus proving  \cref{thm:max}. To deduce \cref{thm:grt}, we consider
a certain necessary condition, which we call {\em admissibility},
on the sizes $x$ and $y$ of the clique set and cone set 
in a clique-cone graph that admits a $\vt K_2$-free colouring.
We then show that $\phi_{\ell,n}(x,y)$ is convex along the relevant boundary
of the admissible region, which implies that its maximum is achieved at one of two points, 
which correspond to the sparse and dense constructions described above, thus completing the proof.

\section{Single-colour compressions}\label{sec:manip}
This section describes various compressions 
that will be applied to the individual coloured graphs.
These are defined using the Gallai--Edmonds decompositions,
which are described in \cref{sec:GE}.
In \cref{sec:completion} we describe the {\em completion} 
compressions in Stage~1 of the main algorithm.
\Cref{sec:isolation,sec:merging} describe
the clique isolation and clique merging algorithms
used for peeling leaves in Stage~3 of the main algorithm, see \Cref{sec:peel}.

\subsection{Gallai--Edmonds decompositions}\label{sec:GE}
We start by defining the \defn{Gallai--Edmonds decomposition}
(for short, \defn{GE-decomposition})  $\GE(G)=(C,A,D)$
of a graph $G=(V,E)$. We call a vertex \defn{essential} in $G$
if it is covered by every maximum matching of $G$,
or otherwise \defn{inessential}.
We let $D\subseteq V$ be the set of inessential vertices,
let $A$ be the set of vertices in $V\sm D$ 
adjacent to at least one vertex of $D$,
and let $C=V\sm (D\cup A)$.

The utility of the GE-decomposition is demonstrated by the
following Gallai--Edmonds Structure theorem;
see~\cite{LP}*{Theorem~3.2.2}.
For $U\subseteq V$, we write
$k(U)=k_G(U)$ for the number of connected components 
in the induced subgraph $G[U]$ on $U$.
We say that a matching of a graph $H$ is \defn{near-perfect}
if it leaves exactly one vertex uncovered (so $|V(H)|$ is odd).
We say that $H$ is \defn{factor-critical} if $H \sm v$
has a perfect matching for every $v \in V(H)$.
\begin{theorem}[Gallai--Edmonds Structure Theorem]
  \label{thm:GE}
  Let $G=(V,E)$ be a graph with $\GE(G)=(C,A,D)$. 
  Then each component of $D$ is factor-critical.
  Also, any maximum matching of $G$ contains a perfect matching of $C$
  and a near-perfect matching of each component of $D$,
  and matches all vertices of $A$ with vertices in distinct components of $D$.
  In particular, $|V|-2\nu(G)=k(D)-|A|$ vertices are uncovered.
  \end{theorem}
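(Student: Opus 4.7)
The plan is the classical alternating-tree approach. I would fix a maximum matching $M$ and, for each $M$-unmatched root $r$, define $\cE_r$ and $\cO_r$ as the vertices reachable from $r$ by $M$-alternating paths of even and odd edge-length, respectively. Since $M$ admits no augmenting path, every vertex of $\cO_r$ is $M$-matched into $\cE_r$, and the sets $\cE_r$ are pairwise disjoint. The first substantive step is the $M$-independent characterisation $D = \bigcup_r \cE_r$. One inclusion is immediate: flipping $M$ along the alternating path from $r$ to $v$ produces a maximum matching missing $v$. For the converse, if $M'$ is a maximum matching missing $v$, then $M \triangle M'$ is a disjoint union of alternating paths and cycles, and the component of $v$ is an $M$-alternating path terminating at some $M$-unmatched vertex $r$ (if both endpoints were unmatched by the same matching, we would have an augmenting path), witnessing $v \in \cE_r$.

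With $D$ characterised, factor-criticality of each component $K$ of $D$ follows: for $v \in K$ and a maximum matching $M$ missing $v$, the alternating tree rooted at $v$ contains all of $K$, and no $M$-edge can leave $K$ at an $\cE_v$-vertex because such a vertex has all its neighbours in $\cO_v \subseteq D$; hence $M$ restricts to a near-perfect matching of $K$ missing only $v$. Next I would establish a Berge--Tutte lower bound $|V| - 2\nu(G) \ge k(D) - |A|$ via the barrier $S = A$: in $G \sm A$ every component of $D$ remains factor-critical, hence odd. Equality is realised by an explicit construction: a Hall-style argument on the auxiliary bipartite graph with $A$ on one side and components of $D$ on the other (joined when there is an edge between them in $G$) yields an $A$-saturating matching whose edges land in distinct $D$-components; extending by near-perfect matchings of each $D$-component and a matching within $C$ produces a maximum matching of deficiency $k(D) - |A|$.

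The structural statements then follow by counting. Each unmatched vertex in any maximum matching $M$ is inessential, hence lies in $D$; since every $D$-component is odd, at least $k(D) - m$ of its vertices are unmatched, where $m$ is the number of $D$-components receiving an $M$-edge from $A$. Equality in the Berge--Tutte bound forces $m = |A|$ with distinct components, no unmatched vertices in $C \cup A$, and exactly $k(D) - |A|$ unmatched vertices in $D$; since $C$-vertices have no $D$-neighbours and $A$ is fully booked into $D$, every $C$-vertex is matched within $C$, yielding a perfect matching of $C$. The main obstacle is the Hall-style argument for $A$-to-$D$ saturation, which requires correctly identifying the improved Berge--Tutte barrier produced from a Hall-deficient subset of $A$ and invoking factor-criticality to count odd components; once this is in place, the remaining structural assertions are bookkeeping.
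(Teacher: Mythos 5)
The paper does not prove \cref{thm:GE} at all: it is quoted as a known result from Lov\'asz--Plummer \cite{LP}*{Theorem~3.2.2}, so there is no in-paper argument to compare against, and your proposal must stand on its own. Its skeleton (characterise $D$ via even alternating reachability, a weak-duality count giving $|V|-2\nu(G)\ge k(D)-|A|$, then forcing) is a legitimate classical route, and the first step --- $D=\bigcup_r\cE_r$ via the symmetric-difference argument --- is fine. But the two steps that carry the real weight fail as written. In the factor-criticality step, the assertion $\cO_v\subseteq D$ is false: the vertices reachable only at odd distance are precisely those that end up in $A$, which is disjoint from $D$ (already in $P_3$ with the middle vertex matched to one end, the odd level of the exposed root is the $A$-vertex); likewise ``all neighbours of an $\cE_v$-vertex lie in $\cO_v$'' fails inside odd cycles, and the claimed pairwise disjointness of the $\cE_r$ is false (in $K_{1,3}$ with one edge matched, both exposed leaves reach the matched leaf at even distance). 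More seriously, the two claims you actually need --- that the alternating tree rooted at $v$ contains all of its component $K$ of $G[D]$, and that no $M$-edge leaves $K$ (i.e.\ no vertex of $K$ is matched into $A$ when $v\in K$ is exposed) --- are exactly the nontrivial content of the theorem and are asserted rather than proved; nothing in your argument rules out a vertex of $K$ being matched to $A$. The standard repairs are either the stability lemma (\cref{lem:stab}) applied repeatedly to reduce to $\GE=(C,\es,D)$ followed by Gallai's lemma, or a genuine blossom/alternating-forest analysis; a one-line reachability remark does not substitute for either.

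The second gap is the equality $|V|-2\nu(G)=k(D)-|A|$. Your construction requires (i) Hall's condition for matching $A$ into distinct components of $G[D]$, which you yourself flag as the main obstacle and only gesture at (it can be done by producing a better Tutte--Berge barrier from a Hall-deficient $A'\subseteq A$ and contradicting the easy direction, but that argument has to be carried out, and it uses the odd parity of $D$-components, i.e.\ the factor-criticality you have not yet secured), and (ii) a perfect matching of $G[C]$: the assembled matching has deficiency exactly $k(D)-|A|$ only if the ``matching within $C$'' is perfect. You obtain the perfect matchability of $C$ only in the final paragraph, as a consequence of the very equality the construction was meant to establish --- as written this is circular. (A non-circular route: after deleting $A$ via \cref{lem:stab}, every vertex of $C$ is essential in $G[C\cup D]$, and since there are no $C$--$D$ edges this forces a maximum matching of $G[C]$ to be perfect.) The concluding forcing argument is correct once odd parity of the $D$-components and the equality are in hand, but both inputs currently rest on the gaps above.
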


We also require the following {\em stability lemma} (see~\cite{LP}*{Lemma~3.2.2}).
\begin{lemma}[Stability]\label{lem:stab}
  Let $G=(V,E)$ be a graph with $\GE(G)=(C,A,D)$.
  Let $v \in A$ and $G'=G \sm v = G[V \sm \{v\}]$.
  Then $\GE(G')=(C,A\sm\{v\},D)$.
\end{lemma}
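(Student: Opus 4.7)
The plan is to prove the Stability Lemma by first pinning down $\nu(G')$, then establishing $D(G') = D$ (where $D(G')$ denotes the inessential set of $G'$), and finally reading off $A(G')$ and $C(G')$ from the definitions. Since $v \in A$ is essential in $G$ we have $\nu(G') < \nu(G)$, while $\nu(G') \ge \nu(G) - 1$ is achieved by deleting the edge at $v$ from any maximum matching of $G$; hence $\nu(G') = \nu(G) - 1$. The easy inclusion $D \subseteq D(G')$ then follows by taking, for each $u \in D$, a maximum matching $M$ of $G$ missing $u$ (supplied by \cref{thm:GE}) and observing that $M \sm \{e_v\}$ is a maximum matching of $G'$ still missing $u$, where $e_v \in M$ is the edge covering $v$.

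The harder inclusion $D(G') \subseteq D$ is the heart of the argument, and my plan is to identify $B := A \sm \{v\}$ as a tight Tutte--Berge barrier of $G'$. Note that $G' - B = G - A$, whose components are the $k = k(D)$ factor-critical (hence odd) $D$-components together with the components of $G[C]$, which are even because $G[C]$ admits a perfect matching by \cref{thm:GE}. The defect formula $|V| - 2\nu(G) = k - |A|$ then yields $q(G' - B) - |B| = k - (|A|-1) = |V'| - 2\nu(G')$, so $B$ is tight. The structural consequence I will exploit is that, in any maximum matching $M'$ of $G'$, the defect $k - |A| + 1$ is saturated exclusively by uncovered vertices in the $D$-components: each component of $G[C]$ must be perfectly matched by $M'$, and each factor-critical $D$-component contributes an uncovered vertex precisely when it is \emph{unused} by $B$ (i.e.\ has no endpoint matched to $B$). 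Since at most $|B| = |A| - 1$ components can be used by $B$, at least $k - |A| + 1$ are unused, which already saturates the defect. Hence every vertex of $B$ is matched into a distinct $D$-component, every vertex of $C$ is matched within its $G[C]$-component, and every uncovered vertex of $M'$ lies in $D$, giving $D(G') \subseteq D$.

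Combining the two inclusions yields $D(G') = D$. Since deleting $v$ does not change adjacencies among the remaining vertices and no vertex of $C$ is adjacent to $D$ in $G$, the definitions immediately give $A(G') = N_{G'}(D(G')) \sm D(G') = A \sm \{v\}$ and $C(G') = V' \sm (A(G') \cup D(G')) = C$. The main obstacle is the tight-barrier step: it succeeds only because the $D$-components are factor-critical and the $C$-components are perfectly matchable, which is exactly what rigidifies the structure of every maximum matching of $G'$ so that no vertex of $B \cup C$ can be left uncovered.
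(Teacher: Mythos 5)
Your proof is correct, but note that the paper does not prove \cref{lem:stab} at all: it is quoted from Lov\'asz--Plummer (\cite{LP}*{Lemma~3.2.2}) as a black box, so there is no in-paper argument to compare against. What you supply is a self-contained derivation of the stability lemma from \cref{thm:GE} alone, and all the steps check out: $\nu(G')=\nu(G)-1$ because $v\in A$ is essential; $D\subseteq D(G')$ by deleting the matching edge at $v$ from a maximum matching of $G$ missing a given $u\in D$; and the converse via the defect count, using that $G'-B=G[C\cup D]$ has exactly $k(D)$ odd components (the factor-critical $D$-components) and even $C$-components, so that $|V'|-2\nu(G')=k(D)-|A|+1$ forces, in \emph{every} maximum matching of $G'$, all uncovered vertices to lie in unused $D$-components, whence $D(G')\subseteq D$; the identification of $A(G')$ and $C(G')$ then follows from $D(G')=D$ and the fact that $C$ has no neighbours in $D$. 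Two cosmetic remarks: the assertion that a used $D$-component ``contributes an uncovered vertex precisely when it is unused'' is not an a priori fact but a consequence of the saturation count, which is exactly how you justify it, so no harm done; and the only property of factor-criticality you actually use is oddness of the $D$-components (together with the perfect matchability of $C$ from \cref{thm:GE}), so the appeal to the Tutte--Berge formula is likewise dispensable --- your direct counting already establishes the tightness you need. In short, a correct and arguably more elementary argument than consulting \cite{LP}, at the cost of a page of bookkeeping the paper avoids by citation.
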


For a set of edges $E$, we write $\tau(E)$
for the minimum size $|T|$ of a set $T$ of vertices
that is a \defn{cover} for $E$, meaning that
$T\cap e\ne\es$ for every $e\in E$.
For future reference, we note the following condition
for every vertex of a cover to be essential.

\begin{lemma}[Covers]\label{lem:cover}
  Let $G=(V,E)$ be a graph and let $F$ be a set of non-edges of $G$.
  Obtain $G^+$ from $G$ by adding $F$ as edges.
  Then $\nu(G^+)\le \nu(G)+\tau(F)$.
  Moreover, if $\nu(G^+)=\nu(G)+\tau(F)$
  and $T$ is a cover for $F$ with $|T|=\tau(F)$,
  then every maximum matching of $G^+$ covers $T$.
\end{lemma}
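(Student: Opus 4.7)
The plan is to prove both assertions by analysing how a maximum matching $M^+$ of $G^+$ interacts with any cover $T$ of $F$. First I would split $M^+$ into $M_E := M^+ \cap E$ and $M_F := M^+ \cap F$. The set $M_E$ is a matching of $G$, so $|M_E|\le \nu(G)$. For $M_F$, observe that every edge of $F$ (and in particular of $M_F$) meets $T$; because $M_F$ is a matching, distinct edges of $M_F$ meet $T$ at distinct vertices. Hence $|M_F|\le |T|=\tau(F)$, and combining
\[
  \nu(G^+)=|M_E|+|M_F|\le \nu(G)+\tau(F),
\]
which is the main inequality.

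For the ``moreover'' part, suppose $\nu(G^+)=\nu(G)+\tau(F)$ and fix any maximum matching $M^+$ of $G^+$ and any minimum cover $T$ of $F$. The two bounds $|M_E|\le\nu(G)$ and $|M_F|\le|T|$ must then both hold with equality; in particular $|M_F|=|T|$. The injection sending each edge $e\in M_F$ to an endpoint of $e$ lying in $T$ (which exists because $T$ covers $F$) is well-defined and, since $M_F$ is a matching, injective. As its domain and codomain both have size $|T|$, it is a bijection onto $T$, so every vertex of $T$ is an endpoint of some edge of $M_F\subseteq M^+$. Thus $T\subseteq V(M^+)$, i.e., $M^+$ covers $T$.

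I do not expect a genuine obstacle here: the argument is essentially a double-counting of matching edges against the cover, and the only thing to be careful about is that the two inequalities $|M_E|\le\nu(G)$ and $|M_F|\le\tau(F)$ must saturate \emph{simultaneously} to deduce that every minimum cover is covered by every maximum matching, which is exactly what the equality hypothesis delivers.
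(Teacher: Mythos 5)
Your proof is correct and follows essentially the same route as the paper: split a maximum matching of $G^+$ into edges of $E$ and edges of $F$, bound the two parts by $\nu(G)$ and $\tau(F)$ respectively, and note that equality forces the $F$-part to saturate every minimum cover, so that every maximum matching of $G^+$ covers $T$. Your explicit bijection between $M^+\cap F$ and $T$ is just a more detailed spelling-out of the paper's one-line observation that missing a cover vertex would lower the bound.
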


\begin{proof}
  Let $T$ be a minimum cover for $F$
  and $M^+$ be a maximum matching in $G^+$.
  Then $M^+$ has at most $\nu(G)$ edges of $E$
  and at most $\tau(F)$ edges of $F$,
  so $\nu(G^+)=|M^+|\le\nu(G)+\tau(F)$.
  If $M^+$ misses a vertex from a minimum cover for $F$
  then the same argument gives $|M^+|<\nu(G)+\tau(F)$.
\end{proof}

\subsection{Completion}\label{sec:completion}

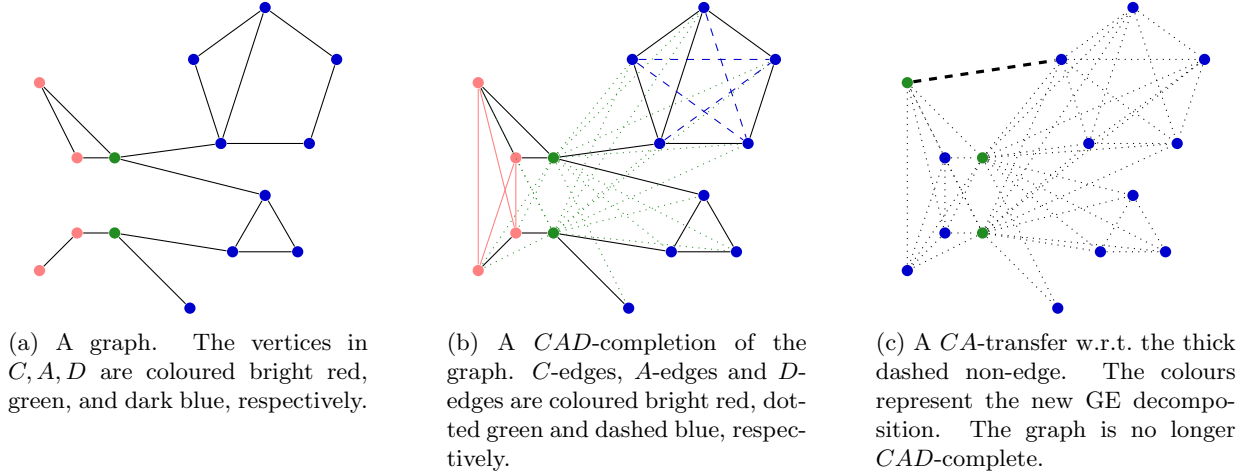
\begin{figure*}[t!]
  \captionsetup{width=0.879\textwidth,font=small}
  \centering
  \begin{subfigure}[t]{0.29\textwidth}
    \centering
    \begin{tikzpicture}[
      every node/.style={fill,color=black,circle,inner sep=1.5pt},
      C/.style={color=red!50!white},
      A/.style={color=ForestGreen},
      D/.style={color=blue!80!black}]
      \node[C] (0) at (1.50000000000000, 1.00000000000000) {};
      \node[C] (1) at (1.50000000000000, 0.000000000000000) {};
      \node[C] (2) at (1.00000000000000, -0.500000000000000) {};
      \node[C] (3) at (1.00000000000000, 2.00000000000000) {};
      \node[A] (4) at (2.00000000000000, 1.00000000000000) {};
      \node[A] (5) at (2.00000000000000, 0.000000000000000) {};
      \node[D] (6) at (3.41221474770753, 1.19098300562505) {};
      \node[D] (7) at (3.04894348370485, 2.30901699437495) {};
      \node[D] (8) at (4.00000000000000, 3.00000000000000) {};
      \node[D] (9) at (4.95105651629515, 2.30901699437495) {};
      \node[D] (10) at (4.58778525229247, 1.19098300562505) {};
      \node[D] (11) at (4.00000000000000, 0.500000000000000) {};
      \node[D] (12) at (3.56698729810778, -0.250000000000000) {};
      \node[D] (13) at (4.43301270189222, -0.250000000000000) {};
      \node[D] (14) at (3.00000000000000, -1.00000000000000) {};
      \draw (0)--(3);
      \draw (0)--(4);
      \draw (1)--(2);
      \draw (1)--(5);
      \draw (3) -- (4);
      \draw (4)--(6);
      \draw (4)--(11);
      \draw (5)--(12);
      \draw (5)--(14);
      \draw (6)--(7);
      \draw (6)--(8);
      \draw (6)--(10);
      \draw (7)--(8);
      \draw (8)--(9);
      \draw (9)--(10);
      \draw (11)--(12);
      \draw (11)--(13);
      \draw (12)--(13);
    \end{tikzpicture}
    \caption{A graph.
    The vertices in $C,A,D$ are coloured bright red, green, and dark blue, respectively.}
    \label{fig:graph}
  \end{subfigure}%
  ~
  \begin{subfigure}[t]{0.04\textwidth}
    ~
  \end{subfigure}
  \begin{subfigure}[t]{0.29\textwidth}
    \centering
    \begin{tikzpicture}[
      every node/.style={fill,color=black,circle,inner sep=1.5pt},
      C/.style={color=red!50!white},
      A/.style={color=ForestGreen},
      D/.style={color=blue!80!black}]
      \node[C] (0) at (1.50000000000000, 1.00000000000000) {};
      \node[C] (1) at (1.50000000000000, 0.000000000000000) {};
      \node[C] (2) at (1.00000000000000, -0.500000000000000) {};
      \node[C] (3) at (1.00000000000000, 2.00000000000000) {};
      \node[A] (4) at (2.00000000000000, 1.00000000000000) {};
      \node[A] (5) at (2.00000000000000, 0.000000000000000) {};
      \node[D] (6) at (3.41221474770753, 1.19098300562505) {};
      \node[D] (7) at (3.04894348370485, 2.30901699437495) {};
      \node[D] (8) at (4.00000000000000, 3.00000000000000) {};
      \node[D] (9) at (4.95105651629515, 2.30901699437495) {};
      \node[D] (10) at (4.58778525229247, 1.19098300562505) {};
      \node[D] (11) at (4.00000000000000, 0.500000000000000) {};
      \node[D] (12) at (3.56698729810778, -0.250000000000000) {};
      \node[D] (13) at (4.43301270189222, -0.250000000000000) {};
      \node[D] (14) at (3.00000000000000, -1.00000000000000) {};
      \draw (0)--(3);
      \draw (0)--(4);
      \draw (1)--(2);
      \draw (1)--(5);
      \draw (3) -- (4);
      \draw (4)--(6);
      \draw (4)--(11);
      \draw (5)--(12);
      \draw (5)--(14);
      \draw (6)--(7);
      \draw (6)--(8);
      \draw (6)--(10);
      \draw (7)--(8);
      \draw (8)--(9);
      \draw (9)--(10);
      \draw (11)--(12);
      \draw (11)--(13);
      \draw (12)--(13);
      \draw[C] (0)--(1);
      \draw[C] (0)--(2);
      \draw[C] (1)--(3);
      \draw[C] (2)--(3);
      \draw[A,dotted] (4)--(1);
      \draw[A,dotted] (4)--(2);
      \draw[A,dotted] (4)--(7);
      \draw[A,dotted] (4)--(8);
      \draw[A,dotted] (4)--(9);
      \draw[A,dotted] (4)--(10);
      \draw[A,dotted] (4)--(12);
      \draw[A,dotted] (4)--(13);
      \draw[A,dotted] (4)--(14);
      \draw[A,dotted] (5)--(0);
      \draw[A,dotted] (5)--(2);
      \draw[A,dotted] (5)--(3);
      \draw[A,dotted] (5)--(6);
      \draw[A,dotted] (5)--(7);
      \draw[A,dotted] (5)--(8);
      \draw[A,dotted] (5)--(9);
      \draw[A,dotted] (5)--(10);
      \draw[A,dotted] (5)--(11);
      \draw[A,dotted] (5)--(13);
      \draw[D,dashed] (6)--(9);
      \draw[D,dashed] (7)--(9);
      \draw[D,dashed] (7)--(10);
      \draw[D,dashed] (8)--(10);
    \end{tikzpicture}
    \caption{A $CAD$-completion of the graph.
    $C$-edges, $A$-edges and $D$-edges
    are coloured bright red, dotted green and dashed blue, respectively.}
    \label{fig:CAD:comp}
  \end{subfigure}
  \begin{subfigure}[t]{0.04\textwidth}
  ~
  \end{subfigure}
  \begin{subfigure}[t]{0.29\textwidth}
    \centering
    \begin{tikzpicture}[
      every node/.style={fill,color=black,circle,inner sep=1.5pt},
      C/.style={color=red!50!white},
      A/.style={color=ForestGreen},
      D/.style={color=blue!80!black}]
      \node[D] (0) at (1.50000000000000, 1.00000000000000) {};
      \node[D] (1) at (1.50000000000000, 0.000000000000000) {};
      \node[D] (2) at (1.00000000000000, -0.500000000000000) {};
      \node[A] (3) at (1.00000000000000, 2.00000000000000) {};
      \node[A] (4) at (2.00000000000000, 1.00000000000000) {};
      \node[A] (5) at (2.00000000000000, 0.000000000000000) {};
      \node[D] (6) at (3.41221474770753, 1.19098300562505) {};
      \node[D] (7) at (3.04894348370485, 2.30901699437495) {};
      \node[D] (8) at (4.00000000000000, 3.00000000000000) {};
      \node[D] (9) at (4.95105651629515, 2.30901699437495) {};
      \node[D] (10) at (4.58778525229247, 1.19098300562505) {};
      \node[D] (11) at (4.00000000000000, 0.500000000000000) {};
      \node[D] (12) at (3.56698729810778, -0.250000000000000) {};
      \node[D] (13) at (4.43301270189222, -0.250000000000000) {};
      \node[D] (14) at (3.00000000000000, -1.00000000000000) {};
      \draw[dotted] (0)--(3);
      \draw[dotted] (0)--(4);
      \draw[dotted] (1)--(2);
      \draw[dotted] (1)--(5);
      \draw[dotted] (3) -- (4);
      \draw[dotted] (4)--(6);
      \draw[dotted] (4)--(11);
      \draw[dotted] (5)--(12);
      \draw[dotted] (5)--(14);
      \draw[dotted] (6)--(7);
      \draw[dotted] (6)--(8);
      \draw[dotted] (6)--(10);
      \draw[dotted] (7)--(8);
      \draw[dotted] (8)--(9);
      \draw[dotted] (9)--(10);
      \draw[dotted] (11)--(12);
      \draw[dotted] (11)--(13);
      \draw[dotted] (12)--(13);
      \draw[dotted] (0)--(1);
      \draw[dotted] (0)--(2);
      \draw[dotted] (1)--(3);
      \draw[dotted] (2)--(3);
      \draw[dotted] (4)--(1);
      \draw[dotted] (4)--(2);
      \draw[dotted] (4)--(7);
      \draw[dotted] (4)--(8);
      \draw[dotted] (4)--(9);
      \draw[dotted] (4)--(10);
      \draw[dotted] (4)--(12);
      \draw[dotted] (4)--(13);
      \draw[dotted] (4)--(14);
      \draw[dotted] (5)--(0);
      \draw[dotted] (5)--(2);
      \draw[dotted] (5)--(3);
      \draw[dotted] (5)--(6);
      \draw[dotted] (5)--(7);
      \draw[dotted] (5)--(8);
      \draw[dotted] (5)--(9);
      \draw[dotted] (5)--(10);
      \draw[dotted] (5)--(11);
      \draw[dotted] (5)--(13);
      \draw[dotted] (6)--(9);
      \draw[dotted] (7)--(9);
      \draw[dotted] (7)--(10);
      \draw[dotted] (8)--(10);
      \draw[dashed,very thick] (3)--(7);
    \end{tikzpicture}
    \caption{A $CA$-transfer w.r.t.~the thick dashed non-edge.
    The colours represent the new GE decomposition.
    The graph is no longer $CAD$-complete.}
    \label{fig:CA:trans}
  \end{subfigure}
  \caption{$CAD$-completion and $CA$-transfer.}
  \label{fig:CAD:CA}
\end{figure*}

In this subsection we describe the $AD$-completion algorithm
used in Stage~1 of the main algorithm; see \cref{alg:AD:comp}.
Our first subroutine is $CAD$-completion of $G$ with $\GE(G)=(C,A,D)$, which adds all edges
contained in $C$, incident to $A$, or contained in a connected component of $G[D]$;
see \cref{alg:CAD:comp,fig:CAD:comp}.
\begin{algorithm}
  \caption{$CAD$-completion}\label{alg:CAD:comp}
\begin{algorithmic}
  \Procedure{CAD-complete}{$G=(V,E)$}
    \State $(C,A,D)\gets \GE(G)$
    \State $E \gets E\cup\{\{u,v\}\ :\ u,v\in C$\}
    \State $E \gets E\cup\{\{u,v\}\ :\ u\in A,\ v\in V$\}
    \For{connected component $K$ in $G[D]$}
      \State $E \gets E\cup\{\{u,v\}\ :\ u,v\in K$\}
    \EndFor
    \State \Return $G$
  \EndProcedure
\end{algorithmic}
\end{algorithm}

We will show (see \Cref{cor:CAD:comp}) that CAD-completion 
preserves the GE-decomposition, so 
$G'=\Call{CAD-complete}{G}$ is \defn{$CAD$-complete},
meaning that \Call{CAD-complete}{$G'$}$=G'$. 

\begin{lemma}[Essential vertices]\label{lem:ess}
  Let $G=(V,E)$ be a graph and let $\GE(G)=(C,A,D)$.
  Let $G'$ be obtained from $G$ by adding an edge that is incident to $C\cup A$.
  Then $\nu(G')=\nu(G)$.
\end{lemma}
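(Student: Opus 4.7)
The proof should be short and rest directly on the definition of essentiality. The plan is to compare a hypothetical larger matching of $G'$ to a maximum matching of $G$ and exploit the fact that vertices of $C\cup A$ are essential.

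First, note that $\nu(G')\ge\nu(G)$ is immediate since $E(G)\subseteq E(G')$. For the reverse inequality, I would argue by contradiction: suppose $\nu(G')\ge\nu(G)+1$, and let $e=\{u,v\}$ be the added edge, where by hypothesis $u\in C\cup A$. Let $M'$ be a maximum matching of $G'$.

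The key observation is that $M'$ must contain $e$. Otherwise $M'\subseteq E(G)$, giving a matching in $G$ of size $\nu(G')>\nu(G)$, a contradiction. So $e\in M'$ and $M:=M'\setminus\{e\}$ is a matching in $G$. Then $|M|=|M'|-1\ge\nu(G)$, so $M$ is a maximum matching of $G$. However $M$ clearly does not cover $u$. Since $u\in C\cup A=V\setminus D$, by the definition of the Gallai--Edmonds decomposition $u$ is essential, i.e.~covered by every maximum matching of $G$, a contradiction. Hence $\nu(G')=\nu(G)$.

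There is no real obstacle here: the entire argument is a one-line consequence of the definition of inessential vertex combined with the fact that only a single edge was added, so any improvement in the matching number must use that edge and strand one of its endpoints in a maximum matching of the original graph. The lemma makes no use of the component structure of $D$ and does not even require \Cref{thm:GE}, only the definitions of $C$, $A$, and $D$ given at the start of \Cref{sec:GE}.
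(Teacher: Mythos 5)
Your proof is correct and follows essentially the same argument as the paper: if adding the edge $e$ increased the matching number, a maximum matching of $G'$ would have to contain $e$, and deleting $e$ from it would yield a maximum matching of $G$ missing the essential vertex $u\in C\cup A$, a contradiction. No gaps, and no meaningful difference from the paper's proof.
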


\begin{proof}
  Let $e=\{u,v\}$ be the added edge, and assume $u\in C\cup A$.
  Write $\nu=\nu(G)$ and $\nu'=\nu(G')$.
  Evidently, $\nu\le\nu'\le\nu+1$.
  Let $M'$ be a maximum matching of $G'$.
  If $\nu'=\nu+1$ then $M'$ must contain $e$.
  But this implies that $M=M'\sm e$ is a maximum matching of $G$
  that does not cover $u$,
  contradicting the assumption that $u\in C\cup A$.
  Thus, $\nu'=\nu$.
\end{proof}

\begin{lemma}[$C/A$-edges]\label{lem:CA}
  Let $G=(V,E)$ be a graph and let $\GE(G)=(C,A,D)$.
  Let $G'$ be obtained from $G$ by adding an edge that is contained in $C$
  or is incident to $A$.
  Then $\GE(G')=\GE(G)$ and $\nu(G')=\nu(G)$.
\end{lemma}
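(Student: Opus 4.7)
The plan is to prove the matching-number equality $\nu(G')=\nu(G)$ as an immediate consequence of \cref{lem:ess} (any edge contained in $C$ or incident to $A$ is in particular incident to $C \cup A$), and then devote the real work to showing $\GE(G')=\GE(G)$. For the latter I will first establish $D(G') = D(G)$, from which $A(G') = A(G)$ (and hence $C(G')=C(G)$) follows by observing that the added edge $e = \{u,w\}$ creates no new neighbour of $D(G)$ in $V \setminus D(G)$: if $e \subseteq C$ then neither endpoint is in $D$, while if $e$ is incident to $A$ the $A$-endpoint already lies in $N_G(D)$ by definition of $A$.

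The inclusion $D(G) \subseteq D(G')$ is immediate since $\nu$ is preserved. For the reverse inclusion I will argue by contradiction: suppose $v \in D(G') \setminus D(G)$, witnessed by a max matching $M'$ of $G'$ missing $v$. Essentiality of $v$ in $G$ forces $e \in M'$, so $M'' := M' \setminus \{e\}$ is a matching of $G$ of size $\nu(G)-1$ missing three distinct vertices $u,v,w$ (distinctness uses that $M'$ covers both endpoints of $e$ while missing $v$). Let $u$ denote an endpoint of $e$ lying in $C \cup A$ (guaranteed by hypothesis), and split on whether the other endpoint $w$ is also essential.

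In \emph{Case 1} ($w \in C \cup A$, so all three of $u,v,w$ are essential) I will use augmenting paths in $G$. Since $|M''| < \nu(G)$, some $M''$-augmenting path $P$ in $G$ exists; the resulting max matching $\hat M := M'' \triangle E(P)$ of $G$ has uncovered set $U(M'') \setminus \{a,b\}$, where $a,b$ are the two endpoints of $P$. Since $\{u,v,w\} \subseteq U(M'')$ contains three essential vertices but only two can be removed, $\hat M$ misses at least one essential vertex --- a contradiction. In \emph{Case 2} ($w \in D(G)$), the edge $e$ cannot be contained in $C$, so it is incident to $A$ and we may take $u \in A(G)$. Applying \cref{lem:stab} at $u$ gives $\GE(G \setminus u) = (C, A \setminus u, D)$; combined with $v \in C \cup A$ and $v \neq u$, this yields $v \in C(G \setminus u) \cup A(G \setminus u)$, so $v$ remains essential in $G \setminus u$. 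Hence $\nu(G \setminus \{u,v\}) = \nu(G \setminus u) - 1 = \nu(G) - 2$. But $M''$ touches neither $u$ nor $v$, so it is a matching of $G \setminus \{u,v\}$ of size $\nu(G) - 1$, contradicting $\nu(G \setminus \{u,v\}) = \nu(G) - 2$.

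I expect the main obstacle to be Case 2: the clean three-vertices-versus-two-endpoints count of Case 1 is unavailable when $w$ is inessential, since the two augmenting-path endpoints could in principle absorb the two essential vertices among $\{u,v,w\}$. The Stability Lemma is the key workaround, since it lets me peel off the $A$-vertex $u$ without disturbing $C$, $D$, or the essentiality of $v$, thereby converting the ``one more matched edge than the deficiency of $G$ allows'' hidden in $M''$ into a clean numerical contradiction.
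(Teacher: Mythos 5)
Your proof is correct, but the heart of it runs along a genuinely different route from the paper's. Both arguments begin identically: $\nu(G')=\nu(G)$ via \cref{lem:ess}, hence $D\subseteq D'$, and both finish identically, recovering $A'=A$ and $C'=C$ from the fact that the new edge creates no $C$--$D$ adjacencies. The difference is in the middle step $D'\subseteq D$. The paper proves it globally: it takes an arbitrary maximum matching $M$ of $G'$, classifies its edges (within $C$, within $D$, $A$-to-$D$, $A$-to-$A\cup C$), and uses the deficiency identity $2\nu=|A\cup C|+|A|+2\delta$ from \cref{thm:GE} to show that $M$ covers all of $A\cup C$, so every vertex of $A\cup C$ stays essential at once. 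You instead argue locally by contradiction: a witnessing maximum matching $M'$ of $G'$ missing a supposedly newly inessential $v$ must use the added edge $e=\{u,w\}$, and after deleting $e$ you derive a contradiction either by an augmenting-path exchange (when both endpoints of $e$ are essential, three missed essential vertices versus two path endpoints) or, when $w\in D$, by peeling off $u\in A$ with \cref{lem:stab} and comparing $\nu(G\sm\{u,v\})=\nu(G)-2$ with the size-$(\nu(G)-1)$ matching $M''$ avoiding $u,v$. What each buys: the paper's counting argument is shorter, case-free, and leans only on \cref{thm:GE}; yours avoids the deficiency bookkeeping entirely, using only Berge's augmenting-path characterisation, the elementary fact that deleting an essential vertex drops $\nu$ by exactly one, and \cref{lem:stab} (which the paper states anyway and uses elsewhere), at the cost of a two-case split and a slightly longer write-up. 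The implicit steps you leave unstated ($\nu(G\sm u)=\nu(G)-1$ for essential $u$, and that no new neighbour of $D$ arises when the non-$A$ endpoint of $e$ avoids $D$) are routine and do not constitute gaps.
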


\begin{proof}
  Let $e$ be an edge that is either contained in $C$ or is incident to $A$.
  \cref{lem:ess} implies that $\nu(G') = \nu(G)=:\nu$.
  Thus $D\subseteq D'$, as any maximum matching of $G$ missing $w \in D$
  is also a maximum matching of $G'$ missing $w$.

  Write $\delta=(|D|-k_G(D))/2$.
  By \cref{thm:GE} we know that $2\nu=|A\cup C|+|A|+2\delta$.
  Fix a maximum matching $M$ of $G'$.
  Set
    $a=|M\cap E(A,A\cup C)|$,
    $b=|M\cap E(A,D)|$,
    $c=|M\cap E(C)|$,
    and $d=|M\cap E(D)|$.
  Note that $a+b+c+d=\nu$,
  that $b\le|A|$,
  and that $d\le\delta$.
  Let $x$ denote the number of vertices in $A\cup C$ which are covered by $M$.
  Then $x=2a+b+2c=2\nu-b-2d\ge 2\nu-|A|-2\delta=|A\cup C|$,
  meaning that every vertex of $A\cup C$ is essential in $G'$,
  i.e.~$A\cup C\subseteq A'\cup C'$. Combined with $D\subseteq D'$,
  we deduce $A'\cup C'=A\cup C$ and $D=D'$.
  Since $G\subseteq G'$ and $G'$ has no edges between $C$ and $D=D'$,
  we conclude that $C'=C$ and $A'=A$.
\end{proof}

\begin{lemma}[$D$-edges]\label{lem:D}
  Let $G=(V,E)$ be a graph and let $\GE(G)=(C,A,D)$.
  Let $G'$ be obtained from $G$ by adding an edge 
  contained in a connected component of $G[D]$.
  Then $\GE(G')=\GE(G)$,
  and $G'[D]$ and $G[D]$ have the same component structure.
  In particular, $\nu(G')=\nu(G)$.
\end{lemma}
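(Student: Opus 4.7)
Let $e = \{a,b\}$ be the added edge and $K$ the component of $G[D]$ containing $a$ and $b$. The plan is to establish, in order: (i) $G'[D]$ and $G[D]$ share the same vertex partition into components, with $K$ simply acquiring one extra edge; (ii) $\nu(G') = \nu(G)$ via the Tutte--Berge formula applied with $U = A$; (iii) the inessential set satisfies $D' = D$; (iv) since the only new edge lies entirely inside $D$, adjacency between $V \setminus D$ and $D$ is unchanged, so $A' = A$ and $C' = C$. Step (i) is immediate because $e \subseteq K$, so in particular $k_{G'}(D) = k_G(D)$.

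For (ii), I apply Tutte--Berge, $|V| - 2\nu(H) = \max_U (k_o(H - U) - |U|)$, with $U = A$. In $G - A$, the components of $G[C]$ each have even order (they carry a perfect matching by \cref{thm:GE}), while the components of $G[D]$ are odd (being factor-critical by \cref{thm:GE}). Hence $k_o(G - A) = k_G(D)$, and \cref{thm:GE} gives $|V| - 2\nu(G) = k_G(D) - |A|$. Since $e$ lies inside the single component $K$, the components of $G' - A$ coincide with those of $G - A$ (with the same parities), so $k_o(G' - A) = k_G(D)$, and Tutte--Berge yields $\nu(G') \le \nu(G)$. The reverse inequality is trivial, so $\nu(G) = \nu(G')$.

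For (iii), the inclusion $D \subseteq D'$ is immediate: any maximum matching of $G$ missing $w \in D$ is still a matching of $G'$ of size $\nu(G') = \nu(G)$. For the reverse direction, I show each $w \in A \cup C$ satisfies $\nu(G' - w) \le \nu(G) - 1$, and hence is essential in $G'$. If $w \in A$, take $U = A \setminus \{w\}$ in $G' - w$: then $(G' - w) - U = G' - A$, with the same odd-component count as before, giving deficiency $k_G(D) - (|A|-1) = |V| - 2\nu(G) + 1$, hence $\nu(G' - w) \le \nu(G) - 1$. If $w \in C$, take $U = A$: the component $K_C$ of $G[C]$ containing $w$ has even order, so $K_C - w$ has odd order and contributes at least one additional odd component to $(G' - w) - A$ beyond those of $G[D]$, giving $k_o((G' - w) - A) \ge k_G(D) + 1$, and again $\nu(G' - w) \le \nu(G) - 1$. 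Combined with (iv), which is a one-line observation about neighbourhoods, this completes the proof.

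The main obstacle is the $w \in C$ case in step (iii): one needs the evenness of each component of $G[C]$ (ultimately supplied by \cref{thm:GE}) to produce the extra odd component after deleting $w$. Everything else is essentially book-keeping of odd components, unaltered by adding an edge inside an already-odd factor-critical component of $D$.
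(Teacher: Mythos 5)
Your proof is correct, and it reaches the conclusion by a partly different route from the paper. The first half coincides in substance: your Tutte--Berge bound with $U=A$ is exactly the paper's counting argument that a maximum matching of $G'$ can join at most $|A|$ of the $k_G(D)$ odd components of $G'[D]$ to $A$ (the components and their parities are unchanged because $e$ lies inside one factor-critical component), giving $\nu(G')=\nu(G)$, and both proofs then get $D\subseteq D'$ for free. The difference is in proving $D'\subseteq D$. The paper argues on the primal side: given $z\in D'$ and a maximum matching of $G'$ missing $z$, factor-criticality of the component $K$ lets one modify the matching to avoid the new edge $e$, so it becomes a maximum matching of $G$ missing $z$. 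You instead argue on the dual side, showing every $w\in A\cup C$ remains essential in $G'$ by exhibiting a barrier in $G'-w$: the set $A\setminus\{w\}$ when $w\in A$, and $A$ when $w\in C$, where in the latter case the evenness of the components of $G[C]$ (from \cref{thm:GE}) guarantees, by parity, at least one extra odd component after deleting $w$. So your argument trades the paper's matching-exchange step for barrier certificates and an appeal to (the easy direction of) the Tutte--Berge formula; both are complete, yours being slightly longer but making the ``certificate'' structure explicit, the paper's being shorter and self-contained given the Gallai--Edmonds statement already quoted.
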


\begin{proof}
  We first show that $\nu(G')=\nu(G)$.
  Let $M'$ be a maximum matching of $G'$.
  It has at most $|A|$ edges from $A$ to $D$,
  so at least $k_G(D)-|A|$ components of $G[D]$ (and of $G'[D]$)
  are not connected to $A$ by $M'$.
  Thus $M'$ leaves at least $k_G(D)-|A|$ vertices unmatched, so
   $2\nu(G')=|M'|\le |V|-(k_G(D)-|A|)=2\nu(G)$ by  \cref{thm:GE}.
  Clearly, $\nu(G')\ge\nu(G)$, so $\nu(G')=\nu(G)$.
  
  Now let $K$ be a connected component of $G[D]$,
  let $u,v\in K$ and let $G'$ be obtained from $G$ by adding the edge $e=\{u,v\}$.
  Write $\GE(G')=(C',A',D')$.
  As in the proof of \cref{lem:CA}, $\nu(G')=\nu(G)$ implies $D\subseteq D'$
  and it suffices to show $D'\subseteq D$ to deduce $\GE(G')=\GE(G)$.
  
  Let $z\in D'$ and let $M_z$ be a maximum matching of $G'$ missing $z$.
  As $K$ is factor-critical, $K \sm \{z\}$ has a perfect matching, 
  so we may assume that $M_z$ does not contain $e$.
  Then $M_z$ is a maximum matching of $G$ missing $z$, so $D'\subseteq D$.
\end{proof}

As \Call{CAD-complete}{} only adds edges,
we have the following corollary of \cref{lem:CA,lem:D}.
\begin{corollary}[$CAD$-completion]\label{cor:CAD:comp}
  For every graph $G$,
  if $G'=\Call{CAD-complete}{G}$
  then
  $G\subseteq G'$,
  $\GE(G')=\GE(G)$,
  and $\nu(G')=\nu(G)$.
\end{corollary}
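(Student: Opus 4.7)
The plan is a straightforward induction on the number of edges added during one execution of \Call{CAD-complete}{}, applying \cref{lem:CA,lem:D} one edge at a time. The inclusion $G \subseteq G'$ is immediate since the pseudocode only enlarges $E$, so the real content is the preservation of the Gallai--Edmonds decomposition; the equality $\nu(G') = \nu(G)$ then follows either via \cref{thm:GE} or directly by propagating the corresponding conclusion of the two lemmas through the induction.

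To set this up, enumerate the edge-additions in chronological order as $e_1, \dots, e_k$, and let $G_i$ denote the graph obtained after adding $e_1, \dots, e_i$, so $G_0 = G$ and $G_k = G'$. I will show by induction on $i$ that $\GE(G_i) = \GE(G)$. The base case is trivial. For the inductive step, write $(C, A, D) := \GE(G) = \GE(G_{i-1})$ and consider which of the three blocks of the algorithm contributed the edge $e_i$. If $e_i \subseteq C$, or if $e_i$ is incident to some vertex of $A$, then \cref{lem:CA} applied to $G_{i-1}$ yields $\GE(G_i) = (C, A, D)$. Otherwise $e_i = \{u, v\}$ with $u, v$ lying in a common connected component of $G[D]$; since $G[D] \subseteq G_{i-1}[D]$, those vertices still lie in a common component of $G_{i-1}[D]$, so \cref{lem:D} applies and again delivers $\GE(G_i) = (C, A, D)$.

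The only point requiring any attention is the verification in the $D$-phase that successive intra-component additions remain intra-component at the time they are added. This is immediate because edge addition only merges components and never splits them, so each connected component of the original $G[D]$ is contained in a component of every $G_{i-1}[D]$. Beyond that, the corollary simply lifts the single-edge lemmas to the full algorithm: passing to $i = k$ gives $\GE(G') = \GE(G)$, and chaining the equalities $\nu(G_{i-1}) = \nu(G_i)$ supplied by \cref{lem:CA,lem:D} gives $\nu(G') = \nu(G)$.
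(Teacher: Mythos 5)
Your proposal is correct and matches the paper's intended argument: the paper derives this corollary exactly by noting that \Call{CAD-complete}{} only adds edges and then iterating \cref{lem:CA,lem:D} edge by edge. Your explicit induction, including the observation that intra-component edges of the original $G[D]$ remain intra-component in each intermediate graph because edge additions can only merge components, just spells out the details the paper leaves implicit.
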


Our second subroutine adds a single edge between $C,D$,
assuming they are both non-empty;
see\footnote{%
In the pseudocode, ``{\bf assert} (condition)''
indicates an assumption or precondition that is required to hold at that point;
it is not an operation of the algorithm.}
\cref{alg:CA:trans,fig:CA:trans}.
Note that this operation might output a graph which is not $CAD$-complete.
\begin{algorithm}
  \caption{$CA$-transfer}\label{alg:CA:trans}
\begin{algorithmic}
  \Procedure{CA-transfer}{$G=(V,E)$, $u$, $v$}{}
    \State $(C,A,D)\gets \GE(G)$
    \State {\bf assert} $u\in C$, $v\in D$
    \State $E \gets E\cup\{\{u,v\}\}$
    \State \Return $G$
  \EndProcedure
\end{algorithmic}
\end{algorithm}
We now show that a successful $CA$-transfer empties $C$.
\begin{lemma}[$CA$-transfer]\label{lem:CA:trans}
  Let $G=(V,E)$ be a $CAD$-complete graph and let $\GE(G)=(C,A,D)$.
  Assume $C,D\ne\es$ and let $u\in C$ and $v\in D$.
  Let $G'=\Call{CA-transfer}{G,u,v}$ and write $\GE(G')=(C',A',D')$.
  Then $G\subseteq G'$,
  $C'=\es$, $A'=A\cup\{u\}$, and $\nu(G')=\nu(G)$.
\end{lemma}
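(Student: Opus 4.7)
The plan is to establish the four assertions in sequence. The inclusion $G\subseteq G'$ is immediate, since the procedure only adds the edge $e=\{u,v\}$. To see $\nu(G')=\nu(G)$, I note that $\nu(G')\ge\nu(G)$ trivially, while any supposed maximum matching $M'$ of $G'$ with $|M'|>\nu(G)$ must contain $e$, in which case $M'\sm\{e\}$ would be a matching of $G$ of size $\nu(G)$ missing $u\in C$, contradicting \Cref{thm:GE} (which guarantees $u$ is essential in $G$).

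For the structural assertion, my strategy is to show $D'=D\cup(C\sm\{u\})$, from which $A'=A\cup\{u\}$ and $C'=\es$ will follow from the definitions together with the facts that (by $CAD$-completeness) $A$ is joined to $V$ and $u$ is adjacent to $v\in D$. The containment $D\cup(C\sm\{u\})\subseteq D'$ has two parts. For $w\in D$, any maximum matching of $G$ missing $w$ is also a maximum matching of $G'$ missing $w$, since $\nu(G')=\nu(G)$. For $w\in C\sm\{u\}$, I start from a maximum matching $M$ of $G$ missing $v$ (which exists since $v\in D$) and use \Cref{thm:GE} to extract a perfect matching $M_C$ of $C$; in particular $|C|$ is even, so since $G[C]$ is a clique (by $CAD$-completeness), $C\sm\{u,w\}$ admits a perfect matching $M_C'$. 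Replacing $M_C$ by $M_C'$ yields a matching $M^*$ of $G$ of size $\nu(G)-1$ missing $\{u,v,w\}$, and then $M^*\cup\{e\}$ is a maximum matching of $G'$ missing $w$, so $w\in D'$.

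The main obstacle is the reverse containment: every vertex of $A\cup\{u\}$ is essential in $G'$. The case of $u$ is immediate—any maximum matching of $G'$ missing $u$ must omit $e$, and so is a maximum matching of $G$ missing $u\in C$, contradicting \Cref{thm:GE}. For $a\in A$, I argue by contradiction: assume there is a maximum matching $M'$ of $G'$ missing $a$. If $e\notin M'$, then $M'$ is a maximum matching of $G$ missing $a\in A$, again contradicting \Cref{thm:GE}. Otherwise $N=M'\sm\{e\}$ is a matching of $G$ of size $\nu(G)-1$ missing $\{a,u,v\}$. Here I exploit $CAD$-completeness decisively: since $a\in A$ is joined to $u$ in $G$, the edge $au$ lies in $G$, and so $N\cup\{au\}$ is a matching of $G$ of size $\nu(G)$—a maximum matching in which $a\in A$ is matched to $u\in C$. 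This contradicts \Cref{thm:GE}, which requires each vertex of $A$ to be matched to a vertex of $D$ in every maximum matching of $G$, completing the proof.
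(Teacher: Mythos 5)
Your proof is correct and follows essentially the same route as the paper: show $\nu(G')=\nu(G)$ by noting a larger matching would have to use $e$ and leave a maximum matching of $G$ missing $u\in C$, put $C\sm\{u\}$ into $D'$ by re-matching the clique $C$ so that $u$ can be paired with $v$ instead, and then identify $A\cup\{u\}$ as $A'$. The only place you go beyond the paper's write-up is in explicitly checking that each $a\in A$ remains essential in $G'$ (via the augmentation $N\cup\{\{a,u\}\}$, using $CAD$-completeness and the $A$--$D$ matching structure of \cref{thm:GE}), a step the paper treats implicitly when asserting $A\subseteq A'$; this is a welcome extra bit of care rather than a different approach.
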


\begin{proof}
  As in \cref{lem:CA}, 
  we have $\nu(G')=\nu(G)$ and $D\subseteq D'$.
  Let $z\in C\sm\{u\}$ and  $M$ be a maximum matching of $G$ 
  missing $v$. As $G$ is $CAD$-complete, we can assume $\{u,z\} \subseteq M$.
  Let $M'=M\sm\{\{z,u\}\}\cup\{\{u,v\}\}$.
  Then $M'$ is a maximum matching of $G'$ missing $z$,
  so $C\sm\{u\}\subseteq D'$. 
  
  It remains to show  $A \cup \{u\} \subseteq A'$.
  Any $x \in A$ has a  neighbour (in $G$) 
  in $D\subseteq D'$, so $A \subseteq A'$.
  Also,  $u\notin D'$,
  as every maximum matching of $G$ contains $u$,
  and every maximum matching of $G'$ is either a maximum matching of $G$ 
  or uses $\{u,v\}$, so contains $u$ either way. 
  Furthermore, $u \notin C'$ as $u$ is adjacent in $G'$ to $v \in D \subseteq D'$.
  Thus $u \in A'$.
 \end{proof}

We conclude this subsection with AD-completion,
which combines the previous two subroutines
to return a $CAD$-complete graph with empty $C$ (unless $C=V$);
see \cref{alg:AD:comp}.
(An additional $CAD$-completion may be 
necessary after a CA-transfer, see \cref{fig:CA:trans}.)
\begin{algorithm}
  \caption{$AD$-completion}\label{alg:AD:comp}
\begin{algorithmic}
  \Procedure{AD-complete}{$G=(V,E)$}
    \State $G\gets$ \Call{CAD-complete}{$G$}
    \Comment \cref{alg:CAD:comp}
    \State $(C,A,D)\gets \GE(G)$
    \If{$C=\es$ or $D=\es$}
      \State \Return $G$
    \EndIf
    \State Let $u\in C$, $v\in D$
    \State $G\gets$ \Call{CA-transfer}{$G,u,v$}
    \Comment \cref{alg:CA:trans}
    \State $G\gets$ \Call{CAD-complete}{$G$}
    \Comment \cref{alg:CAD:comp}
    \State \Return $G$
  \EndProcedure
\end{algorithmic}
\end{algorithm}

Say that $G$ with $\GE(G)=(C,A,D)$ is \defn{$AD$-complete} if it is $CAD$-complete and $C=\es$.
For future reference, we observe that a graph $G$ with vertex partition $(A,D)$
is $AD$-complete with $\GE(G)=(\es,A,D)$ if and only if every vertex in $A$
is adjacent to all other vertices and $G[D]$ is the disjoint union
of more than $|A|$ cliques of odd size. This implies the following lemma.

\begin{lemma}[$D$-flation]\label{lem:D:flation}
  Let $G=(V,E)$ be an $AD$-complete graph with $\GE(G)=(\es,A,D)$.
  Let $K$ be a connected component of $G[D]$ of (odd) size $\gamma$.
  Obtain $G'$ from $G$ by replacing $K$ with a clique $K'$ 
  of some odd size $\gamma'\ge 1$
  (adding all edges between $A$ and $K'$).
  Then $\GE(G')=(\es,A,D')$
  for $D'=(D\sm K)\cup K'$,
  $k_{G'}(D')=k_G(D)$,
  $\nu(G')=\nu(G)+(\gamma'-\gamma)/2$,
  and $G'$ is $AD$-complete.
\end{lemma}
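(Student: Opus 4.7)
The proof plan is to reduce everything to the structural characterisation of $AD$-complete graphs stated in the paragraph just before the lemma, namely that a graph $G$ with vertex partition $(A,D)$ is $AD$-complete with $\GE(G)=(\es,A,D)$ if and only if every vertex in $A$ is adjacent to all other vertices and $G[D]$ is a disjoint union of more than $|A|$ cliques of odd size. Since this characterisation is already in hand, the heart of the proof is just to check that $G'$ still satisfies it, and then to read off $\nu(G')$ from the Gallai--Edmonds formula in \cref{thm:GE}.

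First I would describe $G'$ explicitly: its vertex set is $V'=(V\sm K)\cup K'$, with $A\subseteq V'$ still adjacent to every vertex of $V'\sm A$ (the edges between $A$ and $V\sm K$ are unchanged, and by construction all edges from $A$ to $K'$ are added). The induced subgraph $G'[D']$ is the disjoint union of the connected components of $G[D]$ other than $K$, together with the new clique $K'$; each of these pieces is a clique of odd size, so $G'[D']$ is again a disjoint union of odd cliques. Counting components gives $k_{G'}(D')=(k_G(D)-1)+1=k_G(D)$, and the hypothesis that $G$ is $AD$-complete gives $k_G(D)>|A|$, so the condition $k_{G'}(D')>|A|$ is preserved. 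By the characterisation quoted above, $G'$ is $AD$-complete with $\GE(G')=(\es,A,D')$.

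For the matching number, \cref{thm:GE} applied to $G$ and to $G'$ gives
\[
  2\nu(G) = |V|-\bigl(k_G(D)-|A|\bigr)
  \quad\text{and}\quad
  2\nu(G') = |V'|-\bigl(k_{G'}(D')-|A|\bigr).
\]
Subtracting and using $|V'|-|V|=\gamma'-\gamma$ together with $k_{G'}(D')=k_G(D)$ yields $\nu(G')-\nu(G)=(\gamma'-\gamma)/2$, which is exactly the claimed identity.

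I do not anticipate a real obstacle here: the only mildly subtle point is to be careful that the strict inequality $k_G(D)>|A|$ truly transfers to $k_{G'}(D')>|A|$, which follows immediately because these two quantities are equal. Everything else is a direct verification against the characterisation of $AD$-completeness and a one-line substitution into the Gallai--Edmonds count of uncovered vertices.
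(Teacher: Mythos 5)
Your proof is correct and follows the same route as the paper: the paper derives this lemma directly from the stated characterisation of $AD$-complete graphs (every vertex of $A$ joined to all others, $G[D]$ a disjoint union of more than $|A|$ odd cliques), exactly as you do, and your use of \cref{thm:GE} to read off $\nu(G')-\nu(G)=(\gamma'-\gamma)/2$ is the intended calculation.
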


We conclude this subsection with
the following consequence of \cref{cor:CAD:comp,lem:CA:trans},
noting that $D=\es$ implies $C=V$ and hence $\nu(G)=|V|/2$.
\begin{corollary}[$AD$-completion]\label{cor:AD:comp}
  Let $G=(V,E)$ be a graph with $\nu(G)<|V|/2$,
  and let $G'=\Call{AD-complete}{G}$.
  Then
  $G\subseteq G'$,
  $\nu(G')=\nu(G)$ and $G'$ is $AD$-complete.
\end{corollary}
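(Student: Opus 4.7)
The plan is to track what each of the three lines of \Call{AD-complete}{} does, combining the previously established results on $CAD$-completion and $CA$-transfer. Throughout, every operation only adds edges, so $G\subseteq G'$ will be immediate once we verify the algorithm does not stop prematurely.

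First, let $G_1=\Call{CAD-complete}{G}$. By \cref{cor:CAD:comp} we have $G\subseteq G_1$, $\nu(G_1)=\nu(G)$, and $\GE(G_1)=\GE(G)=:(C,A,D)$; in particular, $G_1$ is $CAD$-complete. The key observation is that the hypothesis $\nu(G)<|V|/2$ forces $D\ne\es$: by \cref{thm:GE}, $|V|-2\nu(G_1)=k(D)-|A|$, so $k(D)\ge|V|-2\nu(G)\ge 1$. Hence if $C=\es$ at this stage, then $G_1$ is already $AD$-complete, and \Call{AD-complete}{} returns $G_1=G'$ with the required properties.

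Otherwise, $C,D\ne\es$, and the algorithm selects $u\in C$, $v\in D$ and sets $G_2=\Call{CA-transfer}{G_1,u,v}$. By \cref{lem:CA:trans}, $G_1\subseteq G_2$, $\nu(G_2)=\nu(G_1)=\nu(G)$, and $\GE(G_2)=(\es,A',D')$ where $A'=A\cup\{u\}$ (and $D'=D\cup(C\sm\{u\})$, using $D\subseteq D'$ and $C\sm\{u\}\subseteq D'$ from the proof of that lemma). Thus, after this step, the $C$-part is already empty, although $G_2$ need not be $CAD$-complete. Applying \cref{cor:CAD:comp} once more, $G'=\Call{CAD-complete}{G_2}$ satisfies $G_2\subseteq G'$, $\nu(G')=\nu(G_2)=\nu(G)$, and $\GE(G')=\GE(G_2)=(\es,A',D')$. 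Since $G'$ is $CAD$-complete with empty $C$-part, it is $AD$-complete by definition. Chaining the inclusions gives $G\subseteq G_1\subseteq G_2\subseteq G'$, completing the verification.

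There is no real obstacle here beyond careful bookkeeping; the only subtlety is observing that the strict inequality $\nu(G)<|V|/2$ guarantees $D\ne\es$ after the initial $CAD$-completion, which is exactly what prevents the algorithm from terminating in a state that fails to be $AD$-complete (the case $D=\es$ would correspond to $C=V$ and a perfect matching, contradicting the hypothesis).
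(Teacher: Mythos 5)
Your proof is correct and follows essentially the same route as the paper, which derives the corollary directly from \cref{cor:CAD:comp} and \cref{lem:CA:trans} while noting that $\nu(G)<|V|/2$ rules out $D=\es$ (the paper phrases this as $D=\es$ forcing $C=V$ and a perfect matching, whereas you use the formula $|V|-2\nu=k(D)-|A|$ from \cref{thm:GE}; the two observations are equivalent). Your write-up just spells out the bookkeeping that the paper leaves implicit.
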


\subsection{Isolation}\label{sec:isolation}

Here we present the clique isolation algorithm 
(see \cref{alg:D:isol,fig:D:isol}), which is used to peel a leaf
by the dissolving method in Stage~3 of the main algorithm.
Its analysis will require
$G$ to be \defn{$D$-complete} (see \cref{fig:graph:D:comp}),
i.e.~a disjoint union of odd cliques, so $\GE(G)=(\es,\es,D)$.
We also require $K$ to be \defn{scattered}, 
meaning that its vertices all belong to distinct cliques.

\begin{figure*}[t!]
  \captionsetup{width=0.879\textwidth,font=small}
  \centering
  \begin{subfigure}[t]{0.29\textwidth}
    \centering
    \begin{tikzpicture}[
      every node/.style={fill,color=black,circle,inner sep=1.5pt},
      C/.style={color=red!50!white},
      A/.style={color=ForestGreen},
      D/.style={color=blue!80!black}]

      \clip (1,-0.45) rectangle (5.5,4);

      \node[D] (6) at (3.41221474770753, 1.19098300562505) {};
      \node[D] (7) at (3.04894348370485, 2.30901699437495) {};
      \node[D] (8) at (4.00000000000000, 3.00000000000000) {};
      \node[D] (9) at (4.95105651629515, 2.30901699437495) {};
      \node[D] (10) at (4.58778525229247, 1.19098300562505) {};
      \node[D] (11) at (3.00000000000000, 0.500000000000000) {};
      \node[D] (12) at (2.56698729810778, -0.250000000000000) {};
      \node[D] (13) at (3.43301270189222, -0.250000000000000) {};
      \node[D] (14) at (1.84683045111455, 2.60901699437495) {};
      \node[D] (15) at (1.84683045111455, 1.49098300562505) {};
      \node[D] (16) at (1.84683045111455, 0.372949016875150) {};
      \draw (6)--(7);
      \draw (6)--(8);
      \draw (6)--(10);
      \draw (7)--(8);
      \draw (8)--(9);
      \draw (9)--(10);
      \draw (11)--(12);
      \draw (11)--(13);
      \draw (12)--(13);
      \draw (6)--(9);
      \draw (7)--(9);
      \draw (7)--(10);
      \draw (8)--(10);
    \end{tikzpicture}
    \caption{A $D$-complete graph.}
    \label{fig:graph:D:comp}
  \end{subfigure}%
  ~
  \begin{subfigure}[t]{0.04\textwidth}
    ~
  \end{subfigure}
  \begin{subfigure}[t]{0.29\textwidth}
    \centering
    \begin{tikzpicture}[
      vx/.style={fill,color=black,circle,inner sep=1.5pt},
      C/.style={color=red!50!white},
      A/.style={color=ForestGreen},
      D/.style={color=blue!80!black}]

      \clip (1,-0.45) rectangle (5.5,4);

      \node[vx] (6) at (3.41221474770753, 1.19098300562505) {};
      \node[vx] (7) at (3.04894348370485, 2.30901699437495) {};
      \node[vx] (8) at (4.00000000000000, 3.00000000000000) {};
      \node[vx] (9) at (4.95105651629515, 2.30901699437495) {};
      \node[vx] (10) at (4.58778525229247, 1.19098300562505) {};
      \node[vx] (11) at (3.00000000000000, 0.500000000000000) {};
      \node[vx] (12) at (2.56698729810778, -0.250000000000000) {};
      \node[vx] (13) at (3.43301270189222, -0.250000000000000) {};
      \node[vx] (14) at (1.84683045111455, 2.60901699437495) {};
      \node[vx] (15) at (1.84683045111455, 1.49098300562505) {};
      \node[vx] (16) at (1.84683045111455, 0.372949016875150) {};
      \draw (11)--(12);
      \draw (11)--(13);
      \draw (12)--(13);

    \node[fit={($(6)-(0,0.2)$)($(7)-(0.2,0)$)($(8)+(0,0.2)$)($(9)+(0.2,0)$)(10)},
          rectangle,rounded corners,fill,black,opacity=0.085] (L) {};
    \node[black,above] at (L.north) {$L$};
    \node[fit={($(9)+(0.1,0.1)$)($(10)-(0.1,0.1)$)},
          rectangle,rounded corners,fill,ForestGreen,opacity=0.17] (S) {};
    \node[ForestGreen,above] at (S.north) {$S$};
    \node[fit={($(14)+(0.2,0.2)$)($(16)-(0.2,0.2)$)(7)},
          rectangle,rounded corners,fill,purple,opacity=0.17] (K) {};
    \node[purple,above] at (K.north west) {$K$};

    \draw[D] (9) -- (10);
    \draw[D] (9) -- (7) -- (10);
    \draw[D] (9) -- (11) -- (10);
    \draw[D] (9) -- (14) -- (10);
    \draw[D] (9) -- (15) -- (10);
    \draw[D] (9) -- (16) -- (10);

    \end{tikzpicture}
    \caption{
    Isolation of a clique, $L$,
    with respect to its subset $S$
    and a disjoint, scattered set $K$.
    }
    \label{fig:D:isol:step}
  \end{subfigure}
  \begin{subfigure}[t]{0.04\textwidth}
  ~
  \end{subfigure}
  \begin{subfigure}[t]{0.29\textwidth}
    \centering
    \begin{tikzpicture}[
      vx/.style={fill,color=black,circle,inner sep=1.5pt},
      C/.style={color=red!50!white},
      A/.style={color=ForestGreen},
      D/.style={color=blue!80!black}]

      \clip (1,-0.45) rectangle (5.5,4);

      \node[vx,D] (6) at (3.41221474770753, 1.19098300562505) {};
      \node[vx,D] (7) at (3.04894348370485, 2.30901699437495) {};
      \node[vx,D] (8) at (4.00000000000000, 3.00000000000000) {};
      \node[vx,A] (9) at (4.95105651629515, 2.30901699437495) {};
      \node[vx,A] (10) at (4.58778525229247, 1.19098300562505) {};
      \node[vx,D] (11) at (3.00000000000000, 0.500000000000000) {};
      \node[vx,D] (12) at (2.56698729810778, -0.250000000000000) {};
      \node[vx,D] (13) at (3.43301270189222, -0.250000000000000) {};
      \node[vx,D] (14) at (1.84683045111455, 2.60901699437495) {};
      \node[vx,D] (15) at (1.84683045111455, 1.49098300562505) {};
      \node[vx,D] (16) at (1.84683045111455, 0.372949016875150) {};
      \draw (11)--(12);
      \draw (11)--(13);
      \draw (12)--(13);

    \node[fit={($(12)-(0,0.1)$)($(14)-(0.1,0)$)($(8)+(0.1,0.1)$)},
          rectangle,rounded corners,fill,blue,opacity=0.17] (D) {};
    \node[blue,above] at (D.north) {$D$};
    \node[fit={($(9)+(0.1,0.1)$)($(10)-(0.1,0.1)$)},
          rectangle,rounded corners,fill,ForestGreen,opacity=0.17] (A) {};
    \node[ForestGreen,above] at (S.north) {$A$};

    \draw (9) -- (10);
    \draw (9) -- (7) -- (10);
    \draw (9) -- (11) -- (10);
    \draw (9) -- (14) -- (10);
    \draw (9) -- (15) -- (10);
    \draw (9) -- (16) -- (10);

    \end{tikzpicture}
    \caption{The GE-decomposition of the resulting graph.
      The resulting graph need not be $AD$-complete.}
    \label{fig:D:isol:after}
  \end{subfigure}
  \caption{$D$-isolation (\cref{alg:D:isol}).}
  \label{fig:D:isol}
\end{figure*}

\begin{algorithm}
  \caption{$D$-isolation}\label{alg:D:isol}
\begin{algorithmic}
  \Procedure{D-isolate}{$G=(V,E)$, $L$, $S$, $K$}
    \State $(C,A,D)\gets \GE(G)$
    \State {\bf assert} $C=A=\es$
    \State {\bf assert} $S\subseteq L$ and $K\cap S=\es$
    \State $\kappa\gets(|L|-1)/2$
    \State {\bf assert} $|K|>|S|=\kappa$
    \State $E\gets E\sm E(L)$
    \State $E\gets E\cup E(S) \cup E(S,K)$
    \State \Return $G$
  \EndProcedure
\end{algorithmic}
\end{algorithm}

\begin{lemma}[$D$-isolation]\label{lem:D:isol}
  Let $G=(V,E)$ be a $D$-complete graph with $\GE(G)=(\es,\es,D)$.
  
  Let $L$ be a maximal clique in $G$ of size $2\kappa+1$ with $\kappa \in \NN_+$,
  let $S\subseteq L$ with $|S|=\kappa$
  and let $K\subseteq V\sm S$ with $|K|>\kappa$ be scattered.
  Write $G'=\Call{D-isolate}{G,L,S,K}$
  and $\GE(G')=(C',A',D')$.
  
  Then
  $C'=\es$,
  $A'=S$,
  $D'=D\sm S$,
  and $\nu(G')=\nu(G)$.
  Moreover, the connected components of $G'[D']$
  are the connected components of $G$
  with $L$ replaced with $\kappa+1$ isolated vertices.
\end{lemma}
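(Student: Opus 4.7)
My plan is to unpack the algorithm concretely, pin down $\nu(G')$ by a tight matching-size argument whose extremal analysis simultaneously yields the Gallai--Edmonds decomposition, and then read off the components of $G'[D']$. Since $G$ is $D$-complete and $L$ is a maximal clique, $L$ is a connected component of $G$. The algorithm deletes $E(L)$ and reinstates $E(S)\cup E(S,K)$, so in $G'$ the only edges touching $L$ are those inside $S$ and those from $S$ to $K$, while every component of $G$ other than $L$ is unchanged.

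To compute $\nu(G')$, I would fix any matching $M$ of $G'$ and let $x,y$ denote its numbers of edges inside $S$ and between $S$ and $K$, respectively. The $S$-vertex budget gives $2x+y\le|S|=\kappa$, hence $x+y\le\kappa$ since $x\ge 0$. Each original odd-clique component $C\ne L$ of $G$ is unchanged in $G'$ and contributes at most $(|C|-1)/2$ internal edges to $M$, so
\[
  |M|\le x+y+\sum_{C\ne L}\tfrac{|C|-1}{2}\le\kappa+\bigl(\nu(G)-\kappa\bigr)=\nu(G),
\]
using $\nu(G)=\sum_C(|C|-1)/2$ and the $L$-contribution $\kappa$. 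For the matching lower bound, since $K$ is scattered and $|K|>\kappa$, I pick $v_1,\dots,v_\kappa\in K$ in distinct components, match $S=\{s_1,\dots,s_\kappa\}$ bijectively to them via $E(S,K)$, and extend by a perfect matching of each $C_{v_i}\setminus\{v_i\}$ together with a near-perfect matching inside every other component $C\ne L$ of $G$, reaching size $\nu(G)$.

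The extremal analysis of the upper bound then yields the Gallai--Edmonds decomposition. Any maximum matching must saturate $x+y=\kappa$, which combined with $2x+y\le\kappa$ forces $x=0$ and $y=\kappa$; hence every maximum matching covers all of $S$, so $S$ consists of essential vertices. For the converse $V\setminus S\subseteq D'$, I exhibit for each $v\in V\setminus S$ a maximum matching missing $v$: if $v\in L\setminus S$ and $v\notin K$ then $v$ is isolated in $G'$; otherwise, letting $C_v$ be the component of $G$ containing $v$, I retarget the construction so that the $\kappa$ vertices matched to $S$ come from $K\setminus C_v$ (available because $|K\cap C_v|\le 1$ by scatteredness, so $|K\setminus C_v|\ge\kappa$) and the resulting matching leaves $v$ uncovered in its component. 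Since each $s\in S$ is adjacent in $G'$ to $K\subseteq D'$, we conclude $A'=S$, $D'=V\setminus S$, and $C'=\es$. Finally, the components of $G'[D']$ are read off by restricting $G'$ to $V\setminus S$: this discards all added $S$-incident edges, leaving $G[V\setminus S]$ with $E(L\setminus S)$ additionally removed, so the components of $G$ other than $L$ survive intact, while $L\setminus S$ splits into $\kappa+1$ isolated vertices. The main subtlety is the matching-counting, which is tight precisely because $2x+y\le\kappa$ is saturated only when $S$ is matched entirely into $K$; this extremality simultaneously places $S$ in $A'$ and enables the retargeting that certifies every other vertex in $D'$.
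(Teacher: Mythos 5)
Your proof is correct and reaches all the conclusions of the lemma, but it organises the central step differently from the paper. The paper computes $\nu(G')$ by first deflating $L$ to a single vertex (\cref{lem:D:flation}), so that $G'$ is obtained from a graph $G_0$ with $\nu(G_0)=\nu(G)-\kappa$ by adding the edge set $F=E(S)\cup E(S,K)$, which has $\tau(F)=|S|=\kappa$; it then invokes \cref{lem:cover}, whose ``moreover'' clause immediately yields that every maximum matching of $G'$ covers $S$. You instead bound an arbitrary matching of $G'$ directly by classifying its edges ($x$ inside $S$, $y$ between $S$ and $K$, the rest inside the untouched odd components), using the budget $2x+y\le\kappa$, and you extract the essentiality of $S$ from the tightness analysis $x=0$, $y=\kappa$. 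This is a more elementary, self-contained route that avoids the cover and deflation machinery, at the cost of redoing by hand what \cref{lem:cover} packages; the remaining steps (an explicit maximum matching missing each $v\in V\sm S$, then $S\subseteq A'$ because $S$ sees $K\subseteq D'$, and reading off the components of $G'[D']$) match the paper's, with your ``retargeting'' playing the role of the paper's local swap of the near-perfect matching inside the component of $v$. One small imprecision to fix: the hypotheses allow $K$ to meet $L$ in one vertex (and this can occur in the application, since $K'$ may contain the link $w$), in which case ``a perfect matching of $C_{v_i}\sm\{v_i\}$'' is unavailable for that particular $v_i$ because $E(L)$ has been deleted; the count still reaches $\nu(G)$ since the remnant of $L$ need contribute nothing beyond the $S$--$K$ edge, or you can simply choose the $v_i$ outside $L$ (possible as $|K|\ge\kappa+1$ and $|K\cap L|\le1$ by scatteredness) --- but this case should be addressed explicitly, as the paper's choice of a maximum matching of $G$ missing $K$ does.
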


\begin{proof}
  We start by showing $\nu(G')=\nu(G)$.
  Form $G_1$ from $G$ by replacing the component $L$ with a single vertex $x$.
  By \cref{lem:D:flation}, we have $\nu(G_1)=\nu(G)-\kappa$
  and $k_{G_1}(D)=k_G(D)$. 
  Now form $G_0$ from $G_1$ by replacing $x$ with the vertices of $L$
  (without adding any edges).
  Evidently, $\nu(G_0)=\nu(G_1)=\nu(G)-\kappa$,
  $\GE(G_0)=\GE(G)=(\es,\es,D)$,
  and $k_{G_0}(D)=k_{G_1}(D)+2\kappa=k_G(D)+2\kappa$.
  Write $G' = G_0\cup F$, where
  $F$ consists of all  possible edges within $S$ or between $S$ and $K$.
  As $|K|>|S|$, we have $\tau(F)=|S|=\kappa$.
  Also, all pairs in $F$ are non-edges of $G_0$,
  as $G$ is a disjoint union of cliques, one of which is $L$,
  so $E(G) \cap F \subseteq E(L)$.
  We deduce $\nu(G')\le\nu(G_0)+\kappa\le\nu(G)$ by \cref{lem:cover}.
  For the other direction, we construct a matching of size $\nu(G)$ in $G'$.
  As $G$ is a disjoint union of odd cliques and $K$ is scattered
  we can choose a maximum matching $M$ of $G$ that misses $K$.
  Then $M_0 := M \sm L$ is a maximum matching of $G_0$
  of size $\nu(G)-\kappa$ that misses $K\cup S$.
  Now the required matching in $G'$ of size $\nu(G)$
  is $M' := M_0 \cup M_S$ where $M_S$
  is a matching between $S$ and $K$ that saturates $S$.
  Thus $\nu(G')=\nu(G)$, as claimed.
  Furthermore, by \cref{lem:cover} again, 
  every vertex of $S$ is essential in $G'$, so $S \cap D'=\es$.
  
  To complete the proof, it suffices to show $D \sm S \subseteq D'$.
  Indeed, then every vertex in $S$ has a neighbour in $K \subseteq D'$,
  so $S \subseteq A'$, giving $D'=D\sm S$, $A'=S$ and $C'=\es$.
  We consider any $v \in D \sm S$ and show $v \in D'$.
  If $v\in K$, as $|K|>|S|$ we can construct $M'$ above to miss $v$, so $v \in D'$.
  It remains to consider $v\in D\sm(S\cup K)$.
  We are done if $M'$ misses $v$, so suppose $M'$ must cover $v$,
  meaning that $v$ belongs to some clique $C$ 
  which also contains some (unique) $u \in K$.
  We modify $M'$ by replacing the near-perfect matching of $C$
  by one that misses $v$ (so covers $u$ instead),
  obtaining a maximum matching of $G'$ missing $v$, as required.
\end{proof} 

For future reference we also record the following
obvious property of $D$-complete graphs.
\begin{lemma}[Maximal cliques]\label{lem:K}
  Let $G=(V,E)$ be a $D$-complete graph and let $\GE(G)=(\es,\es,D)$.
  Let $G'=G[V \sm K]$ for some maximal clique $K$.
  Then $\GE(G')=(\es,\es,D\sm K)$
  and $\nu(G')=\nu(G)-\lfloor{|K|/2}\rfloor$.
  In particular, if $|K|=1$ then $\nu(G')=\nu(G)$.
\end{lemma}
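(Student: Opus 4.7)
The plan is to exploit the very rigid structure of a $D$-complete graph, which by definition is a disjoint union of odd cliques, one for each component of $G[D]=G$. The key preliminary observation is that in any disjoint union of cliques the maximal cliques are precisely the connected components: if $K$ is a clique and $v\in K$, then $K$ is contained in the component of $v$, which is itself a clique, so maximality forces $K$ to equal that component. Applied to our situation, any maximal clique $K$ of $G$ is one of the odd cliques making up $G$.

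Given this, $G'=G[V\sm K]$ is obtained simply by deleting one of the odd clique components, so $G'$ is again a disjoint union of odd cliques, i.e.~$D$-complete, with component set equal to that of $G$ with $K$ removed. In particular the vertex set of $G'$ is $D\sm K$, and $\GE(G')=(\es,\es,D\sm K)$ follows immediately from the defining description of an $AD$-complete graph given right before \cref{lem:D:flation} (specialised to the case $A=\es$).

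For the matching number I would just use \cref{thm:GE}: a maximum matching of $G$ consists of a near-perfect matching in each component, and similarly for $G'$. Since each odd clique of size $2k+1$ admits a near-perfect matching of size $k=\lfloor(2k+1)/2\rfloor$, removing the component $K$ decreases the matching number by exactly $\lfloor|K|/2\rfloor$, yielding $\nu(G')=\nu(G)-\lfloor|K|/2\rfloor$. The ``in particular'' statement follows since $\lfloor 1/2\rfloor=0$. There is no serious obstacle here; the entire content of the lemma is the observation that maximal cliques in a disjoint union of cliques are components, after which both conclusions are immediate from the structure of $D$-complete graphs.
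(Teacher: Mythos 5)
Your proof is correct and is exactly the argument the paper has in mind: the paper records \cref{lem:K} as an ``obvious property'' with no written proof, and your observation that in a disjoint union of odd cliques the maximal cliques are precisely the components, after which the GE-decomposition and the matching-number drop of $\lfloor|K|/2\rfloor$ follow componentwise, is the intended justification.
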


\subsection{Merging}\label{sec:merging}

We conclude this section with the clique merging algorithm 
(see \cref{alg:D:merge}), which is used to peel a leaf
by the merging method in Stage~3 of the main algorithm.

\begin{algorithm}
  \caption{$D$-merging}\label{alg:D:merge}
\begin{algorithmic}
  \Procedure{D-merge}{$G=(V,E)$, $L$, $w$, $K$}
    \State {\bf assert} $L$ is a maximal clique in $G$
    \State {\bf assert} $K$ is a maximal clique in $G$
    \State $L'\gets L\sm\{w\}$
    \State {\bf assert} $L'\cap K=\es$
    \State $E\gets E\sm \{\{w,v\}\ :\ v\in L'\}$
    \State $E\gets E\cup\{\{u,v\}\ :\ u\in L',\ v\in K\}$
    \State \Return $G$
  \EndProcedure
\end{algorithmic}
\end{algorithm}
\begin{lemma}[$D$-merging]\label{lem:D:merge}
  Let $G=(V,E)$ be a $D$-complete graph with $\GE(G)=(\es,\es,D)$.
  Let $L,K$ be two distinct nontrivial maximal cliques in $G$.
  Let $w\in L$,
  and set $G'=\Call{D-merge}{G,L,w,K}$.
  Then $\GE(G')=\GE(G)$, $k_{G'}(D)=k_G(D)$, and $G'$ is $D$-complete.
  In particular, $\nu(G')=\nu(G)$.
\end{lemma}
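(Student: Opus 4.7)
The plan is to verify directly that the $D$-merging procedure preserves the property of being a disjoint union of odd cliques, and then to deduce every claim from the Gallai--Edmonds Structure Theorem (\cref{thm:GE}). The input graph $G$ is $D$-complete with $\GE(G)=(\es,\es,D)$, so its connected components are exactly its maximal cliques, each of odd size. In particular, the distinct maximal cliques $L$ and $K$ are two distinct components of $G$, and the only neighbours of $w\in L$ in $G$ lie in $L':=L\sm\{w\}$.

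I would first track what the procedure does at the level of connected components. It removes precisely the edges from $w$ to $L'$ and then inserts every edge between $L'$ and $K$. In the resulting graph $G'$, three facts are then straightforward to check: (i) the vertex $w$ is isolated, since all of its $G$-neighbours lay in $L'$; (ii) the set $L'\cup K$ spans a clique, because $L'$ and $K$ were already cliques in $G$ and all cross edges between them have been added; (iii) every component of $G$ other than $L$ and $K$ is untouched. Because $|L|$ and $|K|$ are both odd, $|L'\cup K|=|L|+|K|-1$ is odd, and $\{w\}$ is a trivial odd clique. Hence $G'$ is again a disjoint union of odd cliques, i.e.~$D$-complete with $\GE(G')=(\es,\es,D)$. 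Counting components, the two pieces $L$ and $K$ are replaced by the two pieces $\{w\}$ and $L'\cup K$, so $k_{G'}(D)=k_G(D)$.

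Finally, the equality $\nu(G')=\nu(G)$ drops out of \cref{thm:GE}: since the $A$-part is empty in both decompositions, that theorem yields $|V|-2\nu(G)=k_G(D)=k_{G'}(D)=|V|-2\nu(G')$. I do not expect a genuine obstacle in this argument; the lemma is essentially a bookkeeping exercise. The only point worth explicitly calling out is the parity check ensuring that $L'\cup K$ has odd size, which relies on $L$ and $K$ both being odd cliques, as guaranteed by the $D$-completeness of $G$ together with \cref{thm:GE}.
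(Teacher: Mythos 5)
Your argument is correct, and every step checks out: in a $D$-complete graph the maximal cliques are precisely the components, so $L$ and $K$ are disjoint, $w$ becomes isolated, $L'\cup K$ becomes a clique of odd size $|L|+|K|-1$, all other components are untouched, and the component count is unchanged; the matching number then follows from the count of uncovered vertices in \cref{thm:GE} since $A=\es$ before and after. Where you differ from the paper is in the tools: the paper does not re-verify the structure from scratch but instead invokes its $D$-flation lemma (\cref{lem:D:flation}) twice, first deflating $L$ and $K$ to the isolated vertices $w$ and $u$, then inflating $u$ to a clique on $L'\cup K$, reading off at each step that the GE-decomposition, the component count $k(D)$, and $D$-completeness are preserved while $\nu$ changes by the prescribed amount, so that the two changes cancel. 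Your route is more elementary and self-contained -- it only needs the characterisation of $D$-complete graphs as disjoint unions of odd cliques plus the formula $|V|-2\nu=k(D)-|A|$ -- and is arguably more transparent; the paper's route buys brevity and uniformity by recycling a lemma that is needed elsewhere anyway (e.g.\ in the analysis of decycling and isolation), so the bookkeeping about $\GE$, $k(D)$ and $\nu$ is done once in \cref{lem:D:flation} rather than repeated here. Both proofs establish exactly the same conclusions, so this is a legitimate alternative rather than a gap.
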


\begin{proof}
  We apply \cref{lem:D:flation} twice to deflate $L,K$
  into {\em distinct} isolated vertices $w\in L$ and $u\in K$.
  Denote the resulting graph by $G_\bullet$.
  Observe that $\nu(G_\bullet)=\nu(G)+(2-|L|-|K|)/2$.
  Now, inflate $G_\bullet$ again by replacing $u$ with a clique on $L'\cup K$,
  where $L'=L\sm\{w\}$.
  Then the resulting graph is $G'$,
  and $\nu(G')=\nu(G_\bullet)+(|L|+|K|-1-1)/2=\nu(G)$.
  We also deduce that $\GE(G')=\GE(G)$, $k_{G'}(D)=k_G(D)$, and $G'$ is $D$-complete.
\end{proof}

\section{The main algorithm} \label{sec:alg}

Besides the single-colour compressions described in the previous section,
our main algorithm also requires more intricate multicolour compressions.
In \Cref{sec:decycle} we present the optimisation procedure 
used for decycling in Stage~2 of the main algorithm. The remaining coloured
graph has a forest-like structure of cliques, which is exploited in \Cref{sec:peel} 
for the peeling procedure for removing leaves. In \Cref{sec:distil} we present
the main algorithm {\em (distilling)} and deduce our structural result \Cref{thm:max},
deferring \Cref{thm:grt} to the next section.

\begin{figure}[t!]
\captionsetup{width=0.879\textwidth,font=small}
  \centering
  \begin{subfigure}[t]{0.48\textwidth}
    \centering
      \includegraphics[width=\textwidth]{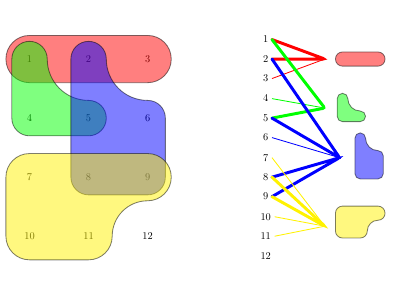}
      \subcaption{A hypergraph and its cyclic incidence graph.
      The cycles are emphasised with thicker lines.}
      \label{fig:hypergraph}
  \end{subfigure}
  \hfill
  \begin{subfigure}[t]{0.48\textwidth}
    \centering
      \includegraphics[width=\textwidth]{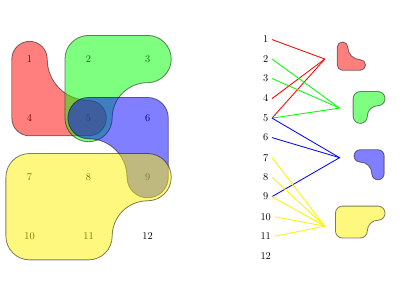}
    \caption{A hyperforest and its incidence graph.
    The red, green and yellow edges are leaves,
    with links $5$, $5$ and $9$.}
      \label{fig:hyperforest}
  \end{subfigure}
  
  \caption{Incidence graphs.}
  \label{fig:incidence:graph}
\end{figure}

\subsection{Decycling} \label{sec:decycle}

In this subsection we implement Stage~2 of the main algorithm,
the {\em decycling} procedure; see \cref{alg:decycle,fig:decycle}.
As discussed in the proof outline, for some $T \subseteq V$
we will delete all coloured edges incident to $T$
and add all possible uncoloured edges incident to $T$,
so that (a) the remaining coloured cliques form a {\em hyperforest},
and (b) the sum of matching numbers over all colours decreases by at least $|T|$.
To prepare for the choice of $T$, we first need to define hyperforests
and formulate an appropriate optimisation problem
that guarantees the required properties.

For a hypergraph $\cH=(U,\vect{H})$,
we define the \defn{incidence graph} of $\cH$,
denoted $I_\cH$, as follows.
The vertex set of $I_\cH$ is $U\cup\vect{H}$,
partitioned into two parts $U$ and $\vect{H}$.
A vertex pair $\{u,S\}$
with $u\in U$ and $S\in\vect{H}$
is connected by an edge if and only if $u\in S$.
We say that $\cH$ is a (loose) \defn{hyperforest}
if $I_\cH$ is a forest (i.e.,~has no cycles).
A \defn{leaf-edge} in a hyperforest $\cH=(U,\vect{H})$
is an edge $L\in\vect{H}$
for which all but at most one neighbour of $L$ in $I_\cH$ are leaves.
If such a non-leaf neighbour $w$ exists, we call it the \defn{link} of that leaf-edge.
See \cref{fig:incidence:graph} for a couple of examples.
We observe that if $|\vect{H}|\ge 2$ then $\cH$ contains at least two leaf-edges,
and that by removing a leaf-edge, the property of being a hyperforest is preserved.
For simpler terminology, we also refer to a leaf-edge as a \defn{leaf}.
For a leaf $L\in\cH$,
let $\link(L)$ be the link of $L$ in $\cH$, if such exists,
or an arbitrary vertex of $L$ otherwise.

Now we formulate the appropriate optimisation problem.
For two hypergraphs $\cX=(U,\vect{X})$ and $\cY=(U,\vect{Y})$ on the same vertex set,
define $\sigma=\sigma_{\cX,\cY}:\cP(U)\to\ZZ$ as follows:
\[
  \sigma(T)=r(T)-|T|, \quad \text{where} \quad
    r(T) = \sum_{X\in\vect{X}}\floor{|T\cap X|/2}
    +\sum_{Y\in\vect{Y}}|T\cap Y|.
\]
We say that $T\subseteq U$ is \defn{$\sigma$-maximal}
if for every $S\subseteq U$ we have $\sigma(S)\le\sigma(T)$,
and for every $T'\supsetneq T$ we have $\sigma(T')<\sigma(T)$.
Note that since $\sigma(\es)=0$,
a $\sigma$-maximal set $T$ satisfies $\sigma(T)\ge 0$.

\begin{lemma}[$\sigma$-maximal sets]\label{lem:sigma}
  Let $\cX=(U,\vect{X})$ and $\cY=(U,\vect{Y})$ be hypergraphs,
  and let $\sigma=\sigma_{\cX,\cY}$.
  Suppose $T\subseteq U$ is $\sigma$-maximal.
  Then
  \begin{enumerate}
    \item\label{it:sig:Y} For every $Y\in\vect{Y}$, $T\supseteq Y$;
    \item\label{it:sig:X} For every $X\in\vect{X}$, either $T\supseteq X$ or $|T\cap X|$ is even;
    \item\label{it:sig:hf} $\cX[U\sm T]$ is a hyperforest.
  \end{enumerate}
\end{lemma}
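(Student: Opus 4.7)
The plan is to prove all three parts by contradiction, in each case starting from a supposed violation of the conclusion and constructing a proper superset $T' \supsetneq T$ with $\sigma(T') \ge \sigma(T)$, which contradicts the strict-monotonicity clause in the definition of $\sigma$-maximality. The key elementary observation is that adding a single new vertex $v$ to $T$ changes $r$ by $|\{Y \in \vect{Y} : v \in Y\}| + |\{X \in \vect{X} : v \in X,\ |T \cap X|\ \text{odd}\}|$ and raises $|T|$ by $1$; in particular, every individual summand of $r$ is non-decreasing under taking supersets of $T$.

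For part~(1), suppose some $Y \in \vect{Y}$ has $v \in Y \sm T$, and set $T' = T \cup \{v\}$. The summand of $r$ corresponding to this $Y$ grows by exactly $1$, every other summand of $r$ is non-decreasing, and $|T|$ grows by $1$; hence $\sigma(T') \ge \sigma(T)$, contradicting maximality. For part~(2), suppose some $X \in \vect{X}$ satisfies $T \not\supseteq X$ and $|T \cap X|$ is odd. Choose $v \in X \sm T$ and again set $T' = T \cup \{v\}$. Then $|T' \cap X| = |T \cap X| + 1$ is even, so $\lfloor |T' \cap X|/2 \rfloor = \lfloor |T \cap X|/2 \rfloor + 1$; the other summands of $r$ do not decrease, so once more $\sigma(T') \ge \sigma(T)$, again contradicting maximality.

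For part~(3), suppose the incidence graph of $\cX[U \sm T]$ contains a cycle, which by bipartiteness has the form $v_1, X_1, v_2, X_2, \ldots, v_k, X_k, v_1$ with distinct $v_i \in U \sm T$, distinct $X_i \in \vect{X}$, and $k \ge 2$. Set $T' = T \cup \{v_1, \ldots, v_k\}$, a proper superset of $T$. For each $i$, both $v_i$ and $v_{i+1 \bmod k}$ lie in $X_i \cap (T' \sm T)$, so $|T' \cap X_i| \ge |T \cap X_i| + 2$ and therefore $\lfloor |T' \cap X_i|/2 \rfloor \ge \lfloor |T \cap X_i|/2 \rfloor + 1$. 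All remaining summands of $r$ are non-decreasing, and the $k$ distinct summands indexed by $X_1, \ldots, X_k$ together contribute an increase of at least $k$, exactly matching the growth $|T'| - |T| = k$. Hence $\sigma(T') \ge \sigma(T)$, contradicting maximality. The only real subtlety is in this last step: one must exploit that the cycle supplies \emph{two} distinct new vertices per hyperedge $X_i$, which is precisely what makes the floor contribution rise by at least one per edge and thus offset the unit cost of each added vertex.
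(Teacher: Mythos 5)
Your proof is correct and follows essentially the same route as the paper: in each part you add the offending vertex (or the set of cycle vertices, exploiting that each hyperedge on the cycle gains two new vertices) to obtain a strict superset $T'$ with $\sigma(T')\ge\sigma(T)$, contradicting $\sigma$-maximality. No gaps.
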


\begin{proof}
  Suppose first that $Y\sm T\ne\es$ for some $Y\in\vect{Y}$,
  and let $y\in Y\sm T$.
  Set $T_y=T\cup\{y\}$,
  and note that
  $\sigma(T_y)-\sigma(T)\ge -|T_y|+|T|+|T_y\cap Y|-|T\cap Y|\ge 0$,
  a contradiction.
  Similarly, suppose that $X\sm T\ne\es$ and $|T\cap X|$ is odd for some $X\in\vect{X}$,
  and let $x\in X\sm T$.
  Set $T_x=T\cup\{x\}$,
  and note that
  $\sigma(T_x)-\sigma(T)\ge -|T_x|+|T| + \floor{|T_x\cap X|/2}-\floor{|T\cap X|/2}\ge 0$,
  a contradiction.
  Finally, suppose that $I:=I_{\cX[U\sm T]}$,
  contains a cycle $\{u_1,X_1,\dots,u_j,X_j,u_1\}$
  of length $2j$ for some $j\ge 2$,
  with $u_i\in U\sm T$
  and $X_i$ an edge of $\cX[U\sm T]$ for all $i\in[j]$.
  Set $L=\{u_1,\dots,u_j\}$
  and $T^\circ=T\cup L$.
  As $|X_i\cap L|\ge 2$ for all $i\in[j]$,
  we obtain the contradiction
  \[
    \sigma(T^\circ)-\sigma(T)
      \ge -|T^\circ|+|T|
      + \sum_{i=1}^j \left(\floor{|T^\circ\cap X_i|/2}-\floor{|T\cap X_i|/2}\right)
      \ge 0. \qedhere
  \]
\end{proof}

To implement the uncolouring part of the proof strategy,
we extend the notion of coloured graphs to 
allow for a set of uncoloured edges,
which we will denote by $G_0$.
In particular, if $G_0=(V,\es)$ is empty,
we identify $(G_1,\dots,G_q)$ with $(G_0,\dots,G_q)$.
We keep the notation $\nu(\cG) = (\nu(G_1), \dots, \nu(G_q))$,
and let $\mns(\cG)=\|\nu(\cG)\|_1=\sum_{j=1}^q\nu(G_j)$,
so $\mns((G_0,\dots,G_q)) = \mns((G_1,\dots,G_q))$,
although $E =   \bigcup_{j=0}^q E_j$ may differ from
 $\bigcup_{j=1}^q E_j$ due to uncoloured edges.
 
 \begin{definition} (Uncolouring)
For $T\subseteq V$,
the \defn{star neighbourhood} $\nabla_T$ of $T$
is the set of all pairs of vertices that contain at least one vertex of $T$.
For a $q$-colouring $\cG=(G_0,\dots,G_q)$ of $G$,
write $\unclr(\cG;T)=(G_0',\dots,G_q')$,
where $G_j'=(V,E_j')$,
$E_0'=E_0\cup \nabla_T$,
and $E_j'=E_j\sm \nabla_T$ for $j=1,\dots,q$.
(In words, we remove any edge incident to $T$
and connect every vertex of $T$ to all other vertices by uncoloured edges;
see \cref{alg:decycle,fig:decycle}.)
\end{definition}

 The hypergraphs $\cX$ and $\cY$ considered above
 will be obtained from the GE-decompositions
 of the coloured graphs, as in the following definition.
 
\begin{definition} (GE-surplus)
Let $\cG=(G_0,G_1,\dots,G_q)$ be a $q$-colouring of $G=(V,E)$.
For $j\in[q]$,
write $\GE(G_j)=(C_j,A_j,D_j)$,
and
let $K_j^1,\dots,K_j^{\iota_j}$ be the sets of vertices of the nontrivial cliques of $G_j[D_j]$.
Set $\cK=\cK(\cG)=(V,\{K_j^i:j\in[q],\ i\in[\iota_j]\})$;
if this family is empty, choose any $v\in V$
and set instead $\cK=(V,\{\{v\}\})$.
Set further $\cA=\cA(\cG)=(V,\{A_1,\dots,A_q\})$ (we allow $\cA$ to be empty),
and define the \defn{GE-surplus} $\sigma_\cG$ of $\cG$ to be $\sigma_{\cK,\cA}$.
\end{definition}

We require some further definitions before stating the decycling algorithm.

\begin{itemize}[nosep]
\item Say that $\cG$ is \defn{$AD$-complete} if $G_j$ is $AD$-complete for all $j\in[q]$.
\item Say that $\cG$ is \defn{$D$-complete} if $G_j$ is $D$-complete for all $j\in[q]$.
\item Say that $\cG$ is \defn{$D$-acyclic} if it is $D$-complete and $\cK$ is a hyperforest.
\item Let $\Theta(\cG)$ denote the set of vertices of degree $|V|-1$ in $G_0$,
  unless $G_0$ is complete,
  in which case we fix $\Theta(\cG)$ to be a designated subset of $V$ of size at least $|V|-1$.\footnote{When $G_0$ is complete, $\Theta(\cG)$ is not uniquely determined by $\cG$. Whenever this case occurs, we fix a particular choice of $\Theta(\cG)$ (as specified at that point in the proof) and keep that designated choice thereafter.}
\item We say that $\cG$ is \defn{proper} if $E_0$ is disjoint from $E_1,\dots,E_q$
and $\Theta(\cG)$ is a cover for $E_0$.
\item We say that $\cG$ is \defn{$\Theta$-complete} if $\cG[V\sm\Theta(\cG)]$ is $AD$-complete.
\end{itemize}

Note that if $\cG$ is proper and $D$-acyclic then it is also $\Theta$-complete.

\begin{algorithm}
  \caption{Decycling}\label{alg:decycle}
\begin{algorithmic}
  \Procedure{Decycle}{$\cG=(G_0,G_1,\dots,G_q)$}
    \State {\bf assert} $\cG$ is $\Theta$-complete
    \State Let $T\subseteq V$ be $\sigma_\cG$-maximal
    \State \Return $\unclr(\cG;T),T$
  \EndProcedure
\end{algorithmic}
\end{algorithm}

\begin{figure*}[t!]
  \captionsetup{width=0.879\textwidth,font=small}
  \centering
  \begin{subfigure}[t]{0.29\textwidth}
    \centering
    \includegraphics[width=\textwidth]{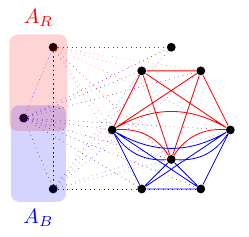}
    \caption{An $AD$-complete $2$-coloured graph.}
    \label{fig:cgraph:AD:comp}
  \end{subfigure}%
  ~
  \begin{subfigure}[t]{0.04\textwidth}
    ~
  \end{subfigure}
  \begin{subfigure}[t]{0.29\textwidth}
    \centering
    \includegraphics[width=\textwidth]{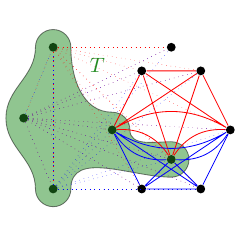}
    \caption{
      A $\sigma$-maximal $T$.
      Here, ${\sigma(T)=1}$.
    }
    \label{fig:sigma:max}
  \end{subfigure}
  \begin{subfigure}[t]{0.04\textwidth}
  ~
  \end{subfigure}
  \begin{subfigure}[t]{0.29\textwidth}
    \centering
    \includegraphics[width=\textwidth]{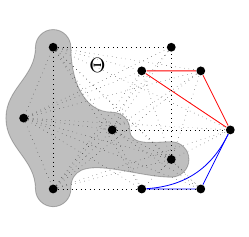}
    \caption{
      The decycled coloured graph.
    }
    \label{fig:decycle:step}
  \end{subfigure}
  \caption{decycling (\cref{alg:decycle}).}
  \label{fig:decycle}
\end{figure*}

Before analysing the decycling procedure, we record a lemma on acyclicity
that will be needed for the analysis of dissolution in the next subsection.

\begin{lemma}[Stability of acyclicity]\label{lem:acyclic:stable}
  Let $\cG$ be a $D$-acyclic $q$-colouring of $G=(V,E)$,
  and let $\sigma=\sigma_{\cG}$.
  Then $\es$ is $\sigma$-maximal (so it is the only $\sigma$-maximal set).
\end{lemma}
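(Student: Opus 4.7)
My approach is to reduce the claim to a standard counting inequality on hyperforests, using the $D$-complete hypothesis to eliminate the $\cA$-contribution to $\sigma_\cG$. First, I would unpack the definitions: since $\cG$ is $D$-complete, each $\GE(G_j)=(\es,\es,D_j)$, so every member of the family $\cA$ in $\sigma_\cG=\sigma_{\cK,\cA}$ is empty and contributes nothing to the definition of $r(T)$. Writing the hyperedges of $\cK$ as $K$, this gives
\[
  \sigma_\cG(T) = \sum_{K}\floor{|T\cap K|/2} - |T|.
\]
The degenerate case in which $\cK$ is set to $(V,\{\{v\}\})$ makes this sum vanish identically, yielding $\sigma_\cG(T)=-|T|$, trivially negative for nonempty $T$.

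Next, I would reduce the definition of $\sigma$-maximality at $\es$ to a single inequality: it suffices to show $\sigma_\cG(T)<0$ for every nonempty $T\subseteq V$. This immediately gives the non-strict bound $\sigma_\cG(S)\le\sigma_\cG(\es)=0$ for all $S$, the strict decrease on every proper superset of $\es$, and the parenthetical claim of uniqueness (no nonempty $T$ can then be $\sigma$-maximal, since it would require $\sigma_\cG(T)\ge 0$).

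The main step is to establish this strict inequality from the hyperforest property of $\cK$. Fix nonempty $T$, let $\cK^*=\{K : |T\cap K|\ge 2\}$, and consider the bipartite subgraph $B$ of the incidence graph $I_\cK$ induced on $T\cup\cK^*$. Since $I_\cK$ is a forest, so is $B$, and hence
\[
  \sum_{K\in\cK^*}|T\cap K| \;=\; e(B) \;\le\; |T|+|\cK^*|-1.
\]
Combining this with $\floor{k/2}\le k-1$ for integers $k\ge 2$, and noting that hyperedges with $|T\cap K|\le 1$ contribute $0$ to the floor sum, I conclude
\[
  \sum_{K}\floor{|T\cap K|/2} \;\le\; \sum_{K\in\cK^*}(|T\cap K|-1) \;\le\; |T|-1,
\]
so that $\sigma_\cG(T)\le -1<0$, as desired.

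I do not expect any real obstacle here: the $D$-acyclic hypothesis is tailor-made so that this forest-counting argument applies cleanly, with the only mild care needed being the handling of the edge cases $\cK^*=\es$ (where the sum is trivially $0$) and the artificial choice of $\cK$ when the natural family is empty.
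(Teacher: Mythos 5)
Your proof is correct, but it reaches the key inequality by a different route than the paper. Both arguments first discard the $\cA$-contribution (all $A_j=\es$ by $D$-completeness) and reduce the lemma to showing $\sigma(T)=\sum_{K}\floor{|T\cap K|/2}-|T|<0$ for every nonempty $T$; from there the paper proceeds by induction on subfamilies $\cH\subseteq\cK$, peeling off a leaf-edge $L$ at each step and splitting the sum according to $L'=L\sm\{\link(L)\}$, whereas you give a one-shot global count: restricting the incidence graph $I_\cK$ to $T$ together with the hyperedges meeting $T$ in at least two vertices yields a forest, so $\sum_{K\in\cK^*}|T\cap K|\le |T|+|\cK^*|-1$, and combining this with $\floor{k/2}\le k-1$ for $k\ge 2$ (and $\floor{k/2}=0$ for $k\le 1$) gives $\sum_K\floor{|T\cap K|/2}\le |T|-1$. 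Your version is arguably more transparent, since it isolates exactly where acyclicity enters (the edge bound $e\le v-1$ for forests) and handles the degenerate cases ($\cK^*=\es$, the artificial singleton $\{v\}$) uniformly; the paper's leaf-peeling induction is slightly longer but mirrors the leaf-based peeling machinery used elsewhere in Stage~3, so the two proofs trade a self-contained counting argument against consistency with the surrounding framework. Your deduction of $\sigma$-maximality and uniqueness of $\es$ from the strict negativity of $\sigma$ on nonempty sets matches the paper's.
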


\begin{proof}
  We will show that $\sigma(S)<0$ for every non-empty $S\subseteq V$.  
  For $\cH\subseteq\cK$ and $U\subseteq V$ write
  $s_\cH(U)=\sum_{K\in\cH}\floor{|U\cap K|/2}$.
  As $\cG$ is $D$-acyclic, $\cK$ is a hyperforest and $\sigma(S) = -|S| + s_\cK(S)$.
  We will show $s_\cH(S)<|S|$ for any $\cH\subseteq\cK$ by induction on $|\cH|$.
  
  If $|\cH|=1$ then
  $s_\cH(S)\le \floor{|S|/2}<|S|$.
  Assume then that $|\cH|\ge 2$, and let $L$ be a leaf of $\cH$.
  If $L$ intersects any other edge of $\cH$,
  let $w$ be the unique point of intersection, and set $L'=L\sm\{w\}$;
  otherwise, set $L'=L$.
  Write $\cH'=\cH\sm\{L\}$ and
  \[
    -|S| + s_{\cH}(S)
    = (-|S\cap L'| +\floor{|S\cap L|/2})
    + (-|S\sm L'|  +s_{\cH'}(S\sm L')).
  \]
 We can assume $S\sm L'\ne\es$, as otherwise
 $-|S| + s_{\cH}(S) = -|S|+\floor{|S|/2}<0$,
 so $s_{\cH'}(S\sm L')<|S\sm L'|$ by the induction hypothesis.
 It remains to show $-|S\cap L'| +\floor{|S\cap L|/2} \le 0$.
 
 This is immediate if $S\cap L=\es$, 
 or otherwise it holds as
      \[
        -|S\cap L'|+\floor{|S\cap L|/2}
        \le -|S\cap L|+1+\floor{|S\cap L|/2}
        \le 0. \qedhere
      \]  
\end{proof}

We conclude this subsection by analysing the decycling procedure.
Here, and in subsequent compressions, we do not increase the matching number 
of any colour or decrease the number of copies of $K_\ell$ in the underlying graph;
if $\cG=(G_0,\dots,G_q)$ is a $q$-colouring of $G$
we let $m_\ell(\cG)$ denote the number of copies of $K_\ell$ in $G$.

Also, as previously discussed, when we add some set $T$ to the uncoloured set,
the sum of all matching numbers should decrease by at least $|T|$.
The remaining conclusions in the lemma keep track of the set $G_0$ of uncoloured edges
so that we maintain a proper $D$-acyclic colouring.

\begin{lemma}[Decycling]\label{lem:decycle}
  Let $\cG$ be a proper $\Theta$-complete $q$-colouring of $G=(V,E)$,
  and let $\cG',T=\Call{Decycle}{\cG}$.
  Then
  \begin{enumerate}
    \item\label{it:dec:ml} $m_\ell(\cG')\ge m_\ell(\cG)$;
    \item\label{it:dec:nu} $\nu(\cG')\le\nu(\cG)$;
    \item\label{it:dec:mns} $\mns(\cG') \le \mns(\cG)-|T|$;
    \item\label{it:dec:disjoint} $\Theta(\cG)\cap T=\es$;
    \item\label{it:dec:theta} $\Theta(\cG')=\Theta(\cG)\cup T$;
    \item\label{it:dec:prop} $\cG'$ is a proper $D$-acyclic $q$-colouring of $G$.
  \end{enumerate}
\end{lemma}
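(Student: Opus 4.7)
The plan is to first unpack the hypothesis, then handle the easy items, and finally address the main obstacle \ref{it:dec:mns}. For unpacking: since $\cG$ is proper, every $v\in\Theta(\cG)$ has full degree in $G_0$ and (as $E_0$ is disjoint from each $E_j$) is isolated in every $G_j$ with $j\ge 1$. Combined with $\Theta$-completeness, this forces $\GE(G_j)=(\es,A_j,D_j)$ with $A_j\subseteq V\sm\Theta(\cG)$, with $\Theta(\cG)\subseteq D_j$ as isolated vertices, and with $G_j[D_j\sm\Theta(\cG)]$ a disjoint union of odd cliques whose nontrivial members are precisely the $K_j^i$. Item \ref{it:dec:disjoint} then follows from $\sigma$-maximality: no $v\in\Theta(\cG)$ lies in any edge of $\cK$ or $\cA$, so removing such a $v$ from $T$ would strictly increase $\sigma$, contradicting maximality. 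Item \ref{it:dec:ml} holds because uncolouring only adds edges to $E_0$, and \ref{it:dec:nu} because $G_j'\subseteq G_j$ for $j\ge 1$.

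For item \ref{it:dec:theta} and the properness half of \ref{it:dec:prop}, the key observation is that properness of $\cG$ makes $V\sm\Theta(\cG)$ a $G_0$-independent set. This controls $G_0'$: when $G_0'$ is not complete, one checks (using \ref{it:dec:disjoint}) that a vertex has full degree in $G_0'$ iff it lies in $T$ or in $\Theta(\cG)$; when $G_0'$ is complete, we designate $\Theta(\cG'):=\Theta(\cG)\cup T$, whose size is $\ge|V|-1$ because $\Theta(\cG)$ must cover the clique that $V\sm T$ induces in $G_0$. Properness is then mechanical: $E_0'=E_0\cup\nabla_T$ stays disjoint from $E_j'=E_j\sm\nabla_T$ because $\cG$ was proper, and $\Theta(\cG)\cup T$ covers both $E_0$ (via $\Theta(\cG)$) and $\nabla_T$ (via $T$).

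The $D$-acyclicity half of \ref{it:dec:prop} is where \cref{lem:sigma} is used. Item \ref{it:sig:Y} of that lemma gives $A_j\subseteq T$, so all $A_j$-vertices become isolated in $G_j'$; item \ref{it:sig:X} forces each nontrivial clique $K_j^i$ either to lie in $T$ (and dissolve into isolated vertices) or to meet $T$ in an even number of vertices, so that $K_j^i\sm T$ is still an odd clique in $G_j'$. Since singleton components of $G_j[D_j]$ survive as singletons, $G_j'$ is a disjoint union of odd cliques, hence $D$-complete. Finally, $\cK(\cG')$ is a sub-hypergraph of $\cK[V\sm T]$, which is a hyperforest by item \ref{it:sig:hf}.

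The main obstacle is \ref{it:dec:mns}. I will prove the colour-wise inequality
\[
  \nu(G_j)-\nu(G_j-T)\ \ge\ |T\cap A_j|+\sum_i\floor{|T\cap K_j^i|/2},
\]
whose sum over $j$ equals $r(T)$; combined with $\sigma(T)\ge\sigma(\es)=0$ this yields $\mns(\cG)-\mns(\cG')\ge|T|$. For the colour-wise bound, I combine the GE formula $\nu(G_j)=(|V|+|A_j|-k(D_j))/2$ with the Tutte--Berge lower bound $|V\sm T|-2\nu(G_j-T)\ge k_o(G_j[D_j\sm T])-|A_j\sm T|$ obtained from the witness set $A_j\sm T$. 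After substitution (and using \ref{it:dec:disjoint} so $T\cap\Theta(\cG)=\es$), the inequality reduces to a per-clique claim $t_i-[K_j^i\subseteq T]\ge 2\floor{t_i/2}$ with $t_i:=|T\cap K_j^i|$. By item \ref{it:sig:X} of \cref{lem:sigma}, $t_i$ is either $0$, positive even, or $|K_j^i|$, and in each of these three cases the per-clique inequality holds with equality. Choosing the Tutte--Berge witness so that this arithmetic is tight in every case is the delicate part; once it is set up, the remaining calculation is routine.
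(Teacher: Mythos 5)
Your proposal is correct; items (\ref{it:dec:ml}), (\ref{it:dec:nu}), (\ref{it:dec:disjoint}), (\ref{it:dec:theta}) and (\ref{it:dec:prop}) are handled essentially as in the paper (same use of \cref{lem:sigma}(\ref{it:sig:Y}),(\ref{it:sig:X}),(\ref{it:sig:hf}), same case analysis for $\Theta(\cG')$ including the designated-$\Theta$ convention when $G_0'$ is complete), but your treatment of the key item (\ref{it:dec:mns}) is genuinely different. The paper computes the decrement \emph{exactly}: it removes $T\cap A_j=A_j$ vertex by vertex via the stability lemma (\cref{lem:stab}) and then removes each $T\cap K_j^i$ via \cref{lem:K} or \cref{lem:D:flation}, using \cref{lem:sigma}(\ref{it:sig:Y}),(\ref{it:sig:X}) to know that each intersection is the whole clique or of even size, arriving at the identity $\nu(G_j')=\nu(G_j)-|T\cap A_j|-\sum_i\floor{|T\cap K_j^i|/2}$ and hence $\mns(\cG')=\mns(\cG)-\sigma(T)-|T|$. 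You instead compare the Gallai--Edmonds count $\nu(G_j)=(|V|+|A_j|-k(D_j))/2$ with a Tutte--Berge upper bound on $\nu(G_j-T)$; after cancellation this reduces to the per-component inequality $|T\cap K|+[\,|K\sm T|\text{ odd}\,]-1\ge 2\floor{|T\cap K|/2}$ over the components $K$ of $G_j[D_j]$, which indeed holds for \emph{every} value of $|T\cap K|$ (even, odd, or all of $K$), so the ``delicate tightness'' you flag is automatic --- in fact, since \cref{lem:sigma}(\ref{it:sig:Y}) gives $A_j\subseteq T$, your witness $A_j\sm T$ is empty and the bound is just the odd-components bound. Your route buys an inequality rather than an identity (which is all the lemma needs), avoids invoking \cref{lem:sigma}(\ref{it:sig:Y}),(\ref{it:sig:X}) for this item (only $\sigma(T)\ge\sigma(\es)=0$ is used), and is arguably more self-contained, at the price of needing Tutte--Berge explicitly and the structural unpacking (each $G_j$ is its restriction to $V\sm\Theta(\cG)$ plus isolated vertices, so $C_j=\es$ and $G_j[D_j]$ is a disjoint union of odd cliques), which you do establish correctly from properness and $\Theta$-completeness; the paper's exact computation, by contrast, reuses only lemmas already proved for the single-colour compressions. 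One small imprecision: in (\ref{it:dec:disjoint}) you assert that no vertex of $\Theta(\cG)$ lies in any edge of $\cK$, which can fail for the fallback singleton edge $\{v\}$ when the clique family is empty; this is harmless, since a singleton contributes $\floor{1/2}=0$ to $r(T)$, exactly the point of the paper's phrasing ``if $|K_j^i|>1$ then $v\notin K_j^i$''.
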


\begin{proof}
  First, since $\unclr(\cG;T)$ is obtained from $\cG$ by adding (and uncolouring) edges,
  we trivially have $m_\ell(\cG')\ge m_\ell(\cG)$;
  this settles (\ref{it:dec:ml}).
  Since every added edge is uncoloured, we also have $\nu(\cG')\le \nu(\cG)$,
  which settles (\ref{it:dec:nu}).

  Let $\sigma=\sigma_\cG$, and recall that $T$ is $\sigma$-maximal.
  Write $\cG'=(G_0',\dots,G_q')$
  and note that $\nu(G_j')=\nu(G_j[V\sm T])$ for $j\in[q]$.
  We determine $\nu(G_j[V \sm T])$ by
  considering the effects of removing $T\cap A_j$
  and subsequently $T\cap D_j$ from $G_j$.
  By \cref{lem:sigma}(\ref{it:sig:Y}), $A_j\subseteq T$, so by \cref{lem:stab},
  removing $T\cap A_j=A_j$ from $G_j$ reduces $\nu(G_j)$ by $|A_j|$.
  Writing $G_j^* = G_j[V \sm A_j]$,
  we have $\nu(G_j^*)=\nu(G_j)-|T\cap A_j|$, and $G_j^*$ is $D$-complete.
  Next we remove $T\cap D_j$ from $G_j^*$
  in steps, by removing $T\cap K_j^i$ for every $i\in[\iota_j]$.
  For each $i$, by \cref{lem:sigma}(\ref{it:sig:X}), either $T\supseteq K_j^i$,
  or $|T\cap K_j^i|$ is even.
  In the first case, by \cref{lem:K},
  removing $T\cap K_j^i$ decreases $\nu$ by $\lfloor|T\cap K_j^i|/2\rfloor$.
  In the second case, by \cref{lem:D:flation},
  removing $T\cap K_j^i$ decreases $\nu$ by $\lfloor|T\cap K_j^i|/2\rfloor$.
  Writing $G_j^\circ=G_j[V\sm T]$,
  we have $\nu(G_j^\circ)=\nu(G_j^*)-\sum_{i\in[\iota_j]}\lfloor|T\cap K_j^i|/2\rfloor$,
  and $G_j^\circ$ is $D$-complete.
  To conclude,
  \[
    \nu(G_j') = \nu(G_j)-\sum_{i\in[\iota_j]}\floor{|T\cap K_j^i|/2}-|T\cap A_j|.
  \]
  Since $T$ is $\sigma$-maximal we have $\sigma(T)\ge 0$,
  so (\ref{it:dec:mns}) follows from
  \[\begin{aligned}
    \mns(\cG')
    = \sum_j\nu(G_j') 
      &= \sum_j \nu(G_j)
         -\left(\sum_j\sum_{i\in[\iota_j]}\floor{|T\cap K_j^i|/2}
         +\sum_j|T\cap A_j|\right)\\
      &= \mns(\cG)-(\sigma(T)+|T|)
      \le \mns(\cG)-|T|.
  \end{aligned}\]

  Next, let $v\in\Theta(\cG)$. We will show that $v\notin T$.
  Indeed, by the definition of $\Theta$, and since $\cG$ is proper,
  for every $j\in[q]$, $v\notin A_j$,
  and for every $i\in[\iota_j]$, if $|K_j^i|>1$ then $v\notin K_j^i$.
  Thus, if $v\in T$,
  we would have $\sigma(T\sm\{v\})-\sigma(T)=1$,
  contradicting the $\sigma$-maximality of $T$.
  This settles~(\ref{it:dec:disjoint}).

  Suppose first that $G_0'$ is not complete.
  Since we haven't recoloured any uncoloured edges,
  we have $\Theta(\cG)\subseteq\Theta(\cG')$. 
  By the definition of $\unclr$, we also have $T\subseteq\Theta(\cG')$.
  On the other hand, if $v\notin \Theta(\cG)\cup T$
  then (by construction, properness of $\cG$, and (\ref{it:dec:disjoint})),
  $d_{G_0'}(v)=|\Theta(\cG)\cup T|$,
  hence, since $G_0'$ is not complete, $\Theta(\cG')=\Theta(\cG)\cup T$.
  Now, if $G_0'$ is complete, then, since 
  $d_{G_0'}(v)=|\Theta(\cG)\cup T|$ for every $v\notin \Theta(\cG)\cup T$,
  we have that $|\Theta(\cG)\cup T|\ge |V|-1$.
  In this case, we {\em set} $\Theta(\cG')=\Theta(\cG)\cup T$.
  This settles~(\ref{it:dec:theta}).

  Recall that $G_j'$ is obtained from $G_j^\circ$ by adding isolated vertices.
  Thus, since $G_j^\circ$ is $D$-complete, $G_j'$ is $D$-complete.
  Moreover, by \cref{lem:sigma}(\ref{it:sig:hf}), $\cK(\cG')$ is a hyperforest,
  hence $\cG'$ is $D$-acyclic.
  Also $\Theta(\cG')$ is a cover for $E(G_0')$,
  as every uncoloured edge of $\cG'$ is incident to either $\Theta(\cG)$ or $T$.
  Since we also kept the set of uncoloured edges disjoint from the set of coloured edges,
  we deduce that $\cG'$ is proper, settling (\ref{it:dec:prop}).
\end{proof}

\subsection{Peeling} \label{sec:peel}

We now come to the third stage of the main algorithm.
The decycling procedure of the previous subsection provides a $D$-acyclic colouring,
in which the cliques form a hyperforest, which we will now iteratively simplify by peeling
away the leaves one by one, using one of two methods: dissolution or merging. 
We start by discussing dissolution (see \cref{alg:dissolve,lem:dissol}).

\begin{algorithm}
  \caption{Clique dissolution}\label{alg:dissolve}
\begin{algorithmic}
  \Procedure{Dissolve}{$\cG=(G_0,\dots,G_q)$, $L$, $K$}
    \State {\bf assert} $\cG$ is $D$-acyclic
    \State {\bf assert} $L=K_j^i$ is a leaf of $\cK(\cG)$
    \State {\bf assert} $K=K_{j'}^{i'}$ is an edge of $\cK(\cG)$
    \State {\bf assert} $j'\ne j$
    \State {\bf assert $|K|\ge|L|$}
    \State $w\gets\link(L)$
    \State $\kappa\gets (|L|-1)/2$
    \State $K'\gets$ subset of $K$ of size $2\kappa+1$
    \State $S\gets$ subset of $L\sm \{w\}$ of size $\kappa$
    \State $G_{j}\gets\Call{D-isolate}{G_{j},L,S,K'}$
    \Comment \cref{alg:D:isol}
    \State $G_{j}[V\sm\Theta(\cG)]\gets\Call{CAD-complete}{G_{j}[V\sm\Theta(\cG)]}$
    \Comment \cref{alg:CAD:comp}
    \State $\cG,T\gets\Call{Decycle}{\cG}$
    \Comment \cref{alg:decycle}
    \State \Return $\cG,T$
  \EndProcedure
\end{algorithmic}
\end{algorithm}

\begin{lemma}[Clique dissolution]\label{lem:dissol}
  Let $\cG$ be a proper $D$-acyclic $q$-colouring of $G=(V,E)$,
  let $L$ be a leaf of $\cK(\cG)$,
  and let $K$ be an edge of $\cK(\cG)$ 
  of a different colour to $L$ with $|K|\ge|L|$.
  Write $\cG',T=\Call{Dissolve}{\cG,L,K}$.
  Then
  \begin{enumerate}
    \item\label{it:dis:prop} $\cG'$ is proper and $D$-acyclic;
    \item\label{it:dis:mns} $\mns(\cG')\le \mns(\cG)-|T|$;
    \item\label{it:dis:K} $|\cK(\cG')|<|\cK(\cG)|$;
    \item\label{it:dis:nu} $\nu(\cG')\le \nu(\cG)$;
    \item\label{it:dis:disjoint} $T\cap\Theta(\cG)=\es$;
    \item\label{it:dis:theta} $\Theta(\cG')=\Theta(\cG)\cup T$;
    \item\label{it:dis:ml} $m_\ell(\cG')\ge m_\ell(\cG)$.
  \end{enumerate}
\end{lemma}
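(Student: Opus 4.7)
The plan is to verify the seven conclusions by analysing the three successive operations of \Call{Dissolve}{}: Step~1, the $D$-isolation of $G_j$; Step~2, the $CAD$-completion of $G_j[V\sm\Theta(\cG)]$; and Step~3, the call to \Call{Decycle}{}. First I verify the preconditions. Because $\cG$ is $D$-acyclic, every $G_{j''}$ is $D$-complete with $\GE(G_{j''})=(\es,\es,D_{j''})$, so $L$ and $K$ are maximal odd cliques of $G_j$ and $G_{j'}$ respectively. Since $\cK(\cG)$ is a hyperforest, $|L\cap K|\le 1$, so $S\subseteq L\sm\{w\}$ is disjoint from $K'$; the same hyperforest property forces any two vertices of $K'\subseteq K$ to lie in distinct components of $G_j[D_j]$ (otherwise $I_\cK$ would contain a $4$-cycle through $K$), so $K'$ is scattered in $G_j$ and the preconditions of \Call{D-isolate}{} are met.

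By \cref{lem:D:isol}, Step~1 preserves $\nu(G_j)$, produces $\GE(G_j)=(\es,S,D_j\sm S)$, and replaces $L$ by $\kappa+1$ isolated vertices in $G_j[D_j\sm S]$ without creating any new nontrivial $D$-clique. By \cref{cor:CAD:comp}, Step~2 preserves $\GE$ and $\nu$ of $G_j[V\sm\Theta(\cG)]$ and creates no new nontrivial $D$-clique in that restriction. Properness through Step~1 relies on $L,K\subseteq V\sm\Theta(\cG)$, which holds because if $v\in L\cap\Theta(\cG)$ then $v$ would have $|V|-1$ uncoloured neighbours, disjoint by properness from its $G_j$-neighbours $L\sm\{v\}$, forcing $|L|\le 1$ and contradicting $|L|\ge 3$ (and symmetrically for $K$). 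After Step~2 the coloured graph is again $\Theta$-complete, so \cref{lem:decycle} applies to give claims~(\ref{it:dis:prop}), (\ref{it:dis:nu}), (\ref{it:dis:mns}), (\ref{it:dis:disjoint}) and (\ref{it:dis:theta}) at once, using that Steps~1 and~2 change neither $\nu$ nor $\Theta$. Claim~(\ref{it:dis:K}) then follows because $\cK$ loses $L$ in Step~1 and no subsequent step introduces a new nontrivial clique.

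The principal obstacle is claim~(\ref{it:dis:ml}), $m_\ell(\cG')\ge m_\ell(\cG)$. Step~1 deletes $E(L)$ from the underlying graph --- the hyperforest property ensures no $E_{j''}$ with $j''\ne j$ contains an edge of $L$, and by properness neither does $E_0$ --- but Step~1 itself reinstates $E(S)\cup E(S,K')$, Step~2 reinstates $E(S,V\sm\Theta(\cG))\supseteq E(S,L\sm S)$, and Step~3, via $S\subseteq T$ from \cref{lem:sigma}(\ref{it:sig:Y}) applied to $\sigma_{\cG^{(2)}}$ (whose $\cA$ contains $S$), reinstates every pair incident to $S$ as an uncoloured edge. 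Writing $L''=L\sm T$, the only permanently lost pairs are those in $\binom{L''}{2}$, and $|L''|\le\kappa+1$ since $S\subseteq T\cap L$. I then construct an injection from lost $K_\ell$-copies (those containing a pair of $\binom{L''}{2}$) to $K_\ell$-copies of the underlying graph of $\cG'$ not already present in that of $\cG$, by swapping the $L''$-vertices of a copy $H$ for an equal number of fresh $S$-vertices (each of which is uncoloured-adjacent to the entire vertex set in $\cG'$); in the delicate subcase where $H$ already occupies most of $S$ and a pure $S$-substitution would yield a copy already present in $G(\cG)$, one substitutes into $K'$ instead, exploiting $|K|\ge|L|$ together with $K\cap L\subseteq\{w\}$ to guarantee at least $2\kappa$ available substitutes in $K'\sm L$. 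The main technical content is verifying that this substitution scheme can be arranged to be injective and that the images do indeed lie outside $G(\cG)$.
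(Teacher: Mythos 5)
Your handling of the preconditions and of conclusions (\ref{it:dis:prop})--(\ref{it:dis:theta}) is sound and close to the paper's route: you reduce everything to \cref{lem:D:isol}, \cref{cor:CAD:comp} and \cref{lem:decycle}, and you correctly note that only $S\subseteq T$ (from \cref{lem:sigma}(\ref{it:sig:Y}), since $\cA(\cG^\circ)=\{S\}$) is needed there. (The paper goes further and proves $T=S$ exactly, via \cref{lem:acyclic:stable} applied to $\cG^\circ[V\sm S]$, but that extra precision is not what your argument is missing.)

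The genuine gap is conclusion (\ref{it:dis:ml}), which is the heart of the lemma, and your substitution scheme as described cannot be repaired by routine checking. Every lost copy lies inside $\Theta(\cG)\cup L$: a vertex $u\in L\sm\{w\}$ has $N_G(u)=(L\sm\{u\})\cup\Theta(\cG)$, since $L$ is a leaf and $\cG$ is proper. But every vertex of $S\subseteq L\sm\{w\}$ is already $G$-adjacent to all of $\Theta(\cG)\cup L$, so replacing the $L''$-vertices of a lost copy by ``fresh'' $S$-vertices always produces an $\ell$-subset of $\Theta(\cG)\cup L$, i.e.\ a copy that was \emph{already} present in $G$ --- this is not a ``delicate subcase where $H$ occupies most of $S$'', it happens for every lost copy, so your default branch never yields a new copy. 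The fallback of substituting into $K'$ also fails whenever the unreplaced part of $H$ lies in $\Theta(\cG)$ (or in $\Theta(\cG)\cup\{w\}$ with $w\in K$), since then the image sits inside the old clique $\Theta(\cG)\cup K$. A correct injection must force a genuinely new pair into each image (e.g.\ at least one $S$-vertex together with at least one vertex of $K'\sm L$, with a case analysis on the base $H\sm L''$ and an injectivity argument across cases and across values of $|H\cap L''|$); none of this is carried out, and you explicitly defer it as ``the main technical content''. The paper avoids these pitfalls by comparing through the intermediate graph obtained after deleting $E(L)$: it computes the loss exactly as $\binom{y+2\kappa+1}{\ell}-\binom{y}{\ell}-(2\kappa+1)\binom{y}{\ell-1}$ with $y=|\Theta(\cG)|$, lower-bounds the gain from joining $S$ to everything, and reduces (\ref{it:dis:ml}) to the inequality $\Gamma_y(\ell)\ge 0$, proved by a double induction on $y$ and $\ell$ whose base case is \cref{lem:binom:bd}. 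Some argument of comparable substance is required here; as it stands, item (\ref{it:dis:ml}) is unproved.
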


For the proof of \cref{lem:dissol}, we will use the following binomial inequality.
\begin{lemma}\label{lem:binom:bd}
  For all integers $\kappa\ge 0$ and $\ell\ge 2$,
  \[
    \binom{3\kappa+1}{\ell} \ge 2\binom{2\kappa+1}{\ell}
  \]
\end{lemma}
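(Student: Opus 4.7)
The plan is to reduce to the non-trivial range and then apply Vandermonde's identity, splitting the ground set of size $3\kappa+1$ into blocks of sizes $\kappa$ and $2\kappa+1$.

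First I would dispose of the boundary cases. If $\kappa=0$ both sides vanish for $\ell\ge 2$, and if $\ell>2\kappa+1$ the right-hand side is $0$ while the left-hand side is nonnegative. So I may assume $\kappa\ge 1$ and $\ell\le 2\kappa+1$.

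Next, applying Vandermonde with the split $3\kappa+1=\kappa+(2\kappa+1)$ gives
\[
  \binom{3\kappa+1}{\ell} = \sum_{i=0}^{\ell}\binom{\kappa}{i}\binom{2\kappa+1}{\ell-i}
  \ge \binom{2\kappa+1}{\ell} + \kappa\binom{2\kappa+1}{\ell-1},
\]
keeping only the $i=0$ and $i=1$ terms (which is legitimate since $\kappa\ge 1$). So it suffices to prove that $\kappa\binom{2\kappa+1}{\ell-1}\ge \binom{2\kappa+1}{\ell}$. Using the ratio identity $\binom{2\kappa+1}{\ell}=\binom{2\kappa+1}{\ell-1}\cdot(2\kappa+2-\ell)/\ell$, this reduces to $\kappa\ell\ge 2\kappa+2-\ell$, i.e.\ $(\kappa+1)(\ell-2)\ge 0$, which holds precisely because $\ell\ge 2$.

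There is essentially no obstacle here: the only thing to notice is that the second Vandermonde term by itself covers the deficit between $\binom{3\kappa+1}{\ell}$ and $2\binom{2\kappa+1}{\ell}$, and that the ``extra factor'' $\kappa$ from $\binom{\kappa}{1}$ interacts with the ratio $(2\kappa+2-\ell)/\ell$ in exactly the right way, giving equality when $\kappa=1$ and $\ell=2$ (consistent with the sanity check $\binom{4}{2}=6=2\binom{3}{2}$).
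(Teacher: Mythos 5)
Your proof is correct and is essentially the paper's argument: the paper's inequality \eqref{eq:bin:1} (with $a=2\kappa+1$, $b=\kappa$, $c=\ell$) is exactly your truncated Vandermonde expansion, and the paper's inequality \eqref{eq:bin:2} is exactly your ratio-identity step $\kappa\binom{2\kappa+1}{\ell-1}\ge\binom{2\kappa+1}{\ell}$, reduced in both cases to $(\kappa+1)(\ell-2)\ge 0$. The only cosmetic difference is that the paper proves the first inequality by a direct ``at least one vs.\ exactly one'' counting argument rather than quoting Vandermonde.
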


\begin{proof}
  We will use the inequality
  \begin{equation}\label{eq:bin:1}
    \binom{a+b}{c}-\binom{a}{c} \ge b\binom{a}{c-1},
  \end{equation}
  for all integers $a,b\ge 0$ and $c\ge 1$.
  For a proof, let $A,B$ be disjoint sets of size $a,b$, respectively.
  Then the LHS counts the number of $c$-subsets of $A\cup B$ with at least one element in $B$,
  whereas the RHS counts the number of $c$-subsets of $A\cup B$ with exactly one element in $B$.
  We will also use the inequality
  \begin{equation}\label{eq:bin:2}
    x\binom{2x+1}{c-1} \ge \binom{2x+1}{c},
  \end{equation}
  for all integers $x\ge 0$ and $c\ge 2$.
  For a proof, 
  note that a simple double-counting argument gives
  \[
    (2x+1-(c-1))\binom{2x+1}{c-1}
    = c\binom{2x+1}{c},
  \]
  and the inequality follows since $(2x+1-(c-1))/c\le x$.
  Using \cref{eq:bin:1,eq:bin:2},
  we observe that
  \[
    \binom{3\kappa+1}{\ell}-\binom{2\kappa+1}{\ell}-\binom{2\kappa+1}{\ell}
    \ge \kappa\binom{2\kappa+1}{\ell-1}-\binom{2\kappa+1}{\ell}\ge 0,
  \]
  as required.
\end{proof}

\begin{proof}[Proof of \cref{lem:dissol}]
  Let $j$ be the colour of $L$ (so $L=K_j^i$ for some $i\in[\iota_j]$),
  and let $j'\ne j$ be the colour of $K$.
  Set $w=\link(L)$.
  Write $|L|=2\kappa+1$,
  let $K'$ be an arbitrary subset of $K$ of size $2\kappa+1$,
  and let $S$ be an arbitrary subset of $L'=L\sm\{w\}$ of size $\kappa$.
  As $\cK$ is a hyperforest, we have $K' \cap S \subseteq K \cap L' = \es$,
  and $K' \subseteq K$ is scattered in $G_j$ as $j' \ne j$.
  Write $G_j^0=\Call{D-isolate}{G_j,L,S,K'}$,
  and obtain $G_j^\circ$ by $CAD$-completing $G_j^0[V\sm\Theta(\cG)]$.
  Write $\cG^\circ=(G_0,\dots,G_{j-1},G_j^\circ,G_{j+1},\dots,G_q)$,
  $\cA^\circ=\cA(\cG^\circ)$, $\cK^\circ=\cK(\cG^\circ)$,
  and $\sigma^\circ=\sigma_{\cG^\circ}$.
  Then $\cG^\circ$ is proper and $\Theta$-complete,
  and $\Theta(\cG^\circ)=\Theta(\cG)$.
  As $\cG',T=\Call{Decycle}{\cG^\circ}$ and $\cG^\circ$ is proper,
  by \cref{cor:CAD:comp,lem:D:isol} we have
  $\nu(G_j^\circ)=\nu(G_j^0)=\nu(G_j)$ and  $\cK^\circ=\cK\sm\{L\}$.

  We now show that $T=S$.
  By \cref{lem:D:isol} we have $\GE(G_j^\circ)=(\es,S,V\sm S)$,
  so $\cA^\circ=\{S\}$,
  and  $T\supseteq S$ by \cref{lem:sigma}(\ref{it:sig:Y}).
  Let $V^*=V\sm S$ and $\cG^*=\cG^\circ[V^*]$,
  and write $\cA^*=\cA(\cG^*)$, $\cK^*=\cK(\cG^*)$,
  and $\sigma^*=\sigma_{\cG^*}$.
  Now, crucially, $\cA^*=\es$ and $\cK^*=\cK^\circ$:
  as $S$ is disjoint from every clique of $\cK^\circ$,
  this follows from \cref{lem:stab,lem:K}.
  Thus,
  $\cK^*$ is a hyperforest,
  hence $\cG^*$ is $D$-acyclic.
  By \cref{lem:acyclic:stable},
  $\es$ is the only $\sigma^*$-maximal set.
  Also, as $\cK^*=\cK^\circ$ and  $\cA^\circ=\{S\}$,
  for every $U\subseteq V^*$
  we have $\sigma^*(U)=\sigma^\circ(S\cup U)$.
  Thus, writing $T=S\cup U$ for $U\subseteq V^*$,
  we have $\sigma^*(U)=\sigma^\circ(T)\ge 0$,
  so $U=\es$, i.e.~$T=S$.

  In particular, $|T|=\kappa$.
  Thus, by \cref{lem:decycle}(\ref{it:dec:prop}),
  $\cG'$ is proper and $D$-acyclic
  (settling (\ref{it:dis:prop}))
  and by \cref{lem:decycle}(\ref{it:dec:mns}),
  $\mns(\cG')\le \mns(\cG)-\kappa$
  (settling (\ref{it:dis:mns})).
  Moreover,
  since $|\cK(\cG)|\ge 2$, we deduce that
  $\cK(\cG')=\cK(\cG)\sm\{L\}$,
  and, in particular,
  $|\cK(\cG')|<|\cK(\cG)|$, settling (\ref{it:dis:K}).
  Also, by \cref{lem:decycle}(\ref{it:dec:nu}),
  $\nu(\cG')\le\nu(\cG^\circ)=\nu(\cG)$,
  settling (\ref{it:dis:nu}).
  Finally, \cref{lem:decycle}(\ref{it:dec:disjoint}),(\ref{it:dec:theta})
  prove (\ref{it:dis:disjoint}),(\ref{it:dis:theta}).

  Next we bound $m_\ell$.
  To this end, we consider the construction
  of $\cG'$ from $\cG$ in two steps.
  Write $Y=\Theta(\cG)$ and $y=|Y|$.
  First, we remove all edges in $L$, to obtain $\cG^*$.
  We note that the number of copies of $K_\ell$ destroyed by this operation is
  \begin{equation}\label{eq:ml:step1}
    m_\ell(\cG)-m_\ell(\cG^*)
    = \binom{y+2\kappa+1}{\ell} - \binom{y}{\ell} - (2\kappa+1)\binom{y}{\ell-1}.
  \end{equation}
  Indeed,
  we destroyed any copy of $K_\ell$ in $\Theta(\cG)\cup L$
  (first term),
  excluding copies completely contained in $\Theta(\cG)$
  (second term)
  or having exactly one vertex in $L$
  (third term).
  The next step is to
  connect every vertex in $S$ to any other vertex in $V$.
  The number of newly created copies of $K_\ell$ is
  \begin{equation}\label{eq:ml:step2}
    m_\ell(\cG')-m_\ell(\cG^*)
    \ge \binom{y+3\kappa+1}{\ell} - \binom{y+2\kappa+1}{\ell} - \kappa\binom{y}{\ell-1}
    + \kappa\binom{y+\kappa}{\ell-1} - \kappa\binom{y}{\ell-1}.
  \end{equation}
  Indeed,
  every $\ell$-tuple in $Y\cup S\cup K$ is now a copy of $K_\ell$ (first term),
  but among those, every copy of $K_\ell$ in $Y\cup K$ already exists (second term),
  and so does every copy of $K_\ell$ in $Y\cup S$ that has exactly one vertex in $S$ (third term).
  In addition, every $\ell$-tuple on $Y\cup L'$
  that has exactly one vertex in $L'\sm S$
  is now a copy of $K_\ell$ (fourth term),
  but this includes copies that do not have any vertex in $S$,
  and hence are not new (fifth term).
  Combining \cref{eq:ml:step1,eq:ml:step2}, we have
  \begin{equation*}
    \begin{aligned}
    m_\ell(\cG')-m_\ell(\cG)
    &\ge \binom{y+3\kappa+1}{\ell} - \binom{y+2\kappa+1}{\ell} - 2\kappa\binom{y}{\ell-1}
    + \kappa\binom{y+\kappa}{\ell-1}\\
    &\phantom{{} \ge {}} - \left(
      \binom{y+2\kappa+1}{\ell} - \binom{y}{\ell} - (2\kappa+1)\binom{y}{\ell-1}
    \right)\\
    &\ge \Gamma_y(\ell) :=
     \binom{y+3\kappa+1}{\ell} - 2\binom{y+2\kappa+1}{\ell} + \binom{y+1}{\ell}
    + \kappa\binom{y+\kappa}{\ell-1}.
    \end{aligned}
  \end{equation*}
  We prove by induction on $y\ge 0$ that $\Gamma_y(\ell) \ge 0$ for every $\ell\ge 1$.

  \begin{description}
    \item[Base case: $\ell=1$.]
      In this case,
      \[
        \Gamma_y(1) = y+3\kappa+1 - 2(y+2\kappa+1) + 1 + \kappa + y
        = 0.
      \]
    \item[Base case: $y=0$ and $\ell\ge 2$.]
      In this case, by \cref{lem:binom:bd},
      \[
        \Gamma_0(\ell) = \binom{3\kappa+1}{\ell} - 2\binom{2\kappa+1}{\ell} + \kappa\binom{\kappa}{\ell-1}
        \ge \binom{3\kappa+1}{\ell} - 2\binom{2\kappa+1}{\ell}
        \ge 0.
      \]
    \item[Step: assume $\Gamma_y(\ell)\ge 0$ for all $\ell\ge 1$, and let $\ell\ge 2$.]
      In this case,
      \[
      \begin{aligned}
        \Gamma_{y+1}(\ell)
        &= \binom{y+1+3\kappa+1}{\ell} - 2\binom{y+1+2\kappa+1}{\ell} + \binom{y+1+1}{\ell}
           + \kappa\binom{y+1+\kappa}{\ell-1} \\
        &= \left(\binom{y+3\kappa+1}{\ell}+\binom{y+3\kappa+1}{\ell-1}\right)
        - 2\left(\binom{y+2\kappa+1}{\ell}+\binom{y+2\kappa+1}{\ell-1}\right)\\
        &\phantom{{} = {}} + \left(\binom{y+1}{\ell}+\binom{y+1}{\ell-1}\right)
           + \kappa\left(\binom{y+\kappa}{\ell-1}+\binom{y+\kappa}{\ell-2}\right) \\
        &= \Gamma_{y}(\ell)+\Gamma_{y}(\ell-1) \ge 0.
      \end{aligned}
      \]
  \end{description}
  This settles (\ref{it:dis:ml}).
\end{proof}

We have now prepared all the ingredients for the peeling algorithm (see \cref{alg:peel}).
As mentioned above, we repeatedly remove leaves, using dissolution
or merging, according to the following case distinction.
Consider $\cK=\cK(\cG)$ with $|\cK| \ge 2$, 
and let $L$ be a smallest leaf of $\cK$.
\begin{itemize}
\item If there exists an edge $K$ of $\cK$ of a different colour to $L$
with $|K|\ge |L|$ then we dissolve $L$ in relation to $K$.
\item
Otherwise, there exists an edge $K$ of the same colour as $L$
with $|K| \ge |L|$ so that we can merge $L$ into $K$;
indeed, any other leaf can play the role of $K$.
\end{itemize}

\begin{algorithm}
  \caption{Peeling}\label{alg:peel}
\begin{algorithmic}
  \Procedure{Peel}{$\cG=(G_0,\dots,G_q)$}
    \State {\bf assert} $\cG$ is $D$-acyclic
    \State $S\gets\es$
    \While{$|\cK(\cG)|\ge 2$}
      \State $L\gets K_j^i = $ leaf of minimum size of $\cK$
      \If{$\exists K=K_{j'}^{i'}$ with $j'\ne j$ and $|K|\ge|L|$}
        \State $\cG,T\gets$ \Call{Dissolve}{$\cG$, $L$, $K$}
        \Comment \cref{alg:dissolve}
        \State $S\gets S\cup T$
      \Else
        \State $K\gets K_j^{i'}$ with $i'\ne i$ and $|K|\ge|L|$
        \State $w\gets\link(L)$
        \State $G_j\gets$ \Call{Merge}{$G_j,L,w,K$}
        \Comment \cref{alg:D:merge}
      \EndIf
    \EndWhile
    \State \Return $\cG,S$
  \EndProcedure
\end{algorithmic}
\end{algorithm}

We showed above that the dissolution steps in the peeling algorithm
have the required properties; now we also do so for the merging steps.

\begin{lemma}[Merging in coloured graphs]\label{lem:merge:clr}
  Let $\cG$ be a proper $D$-acyclic $q$-colouring of $G=(V,E)$.
  Fix $j\in[q]$,
  and let $L,K$ be two distinct nontrivial cliques in $G_j$ with $|L|\le|K|$.
  Suppose $L$ is a leaf in $\cK(\cG)$,
  and let $w=\link(L)$.
  Set $G_j'=\Call{Merge}{G_j,L,w,K}$ and obtain $\cG'$ from $\cG$ by replacing $G_j$ with $G_j'$.
  Then
  \begin{enumerate}
    \item\label{it:mrg:nu} $\nu(\cG')=\nu(\cG)$;
    \item\label{it:mrg:mns} $\mns(\cG')=\mns(\cG)$;
    \item\label{it:mrg:theta} $\Theta(\cG')=\Theta(\cG)$;
    \item\label{it:mrg:prop} $\cG'$ is proper and $D$-acyclic;
    \item\label{it:mrg:K} $|\cK(\cG')|<|\cK(\cG)|$;
    \item\label{it:mrg:ml} $m_\ell(\cG')\ge m_\ell(\cG)$.
  \end{enumerate}
\end{lemma}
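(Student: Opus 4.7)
The plan is to inherit assertions~(\ref{it:mrg:nu})--(\ref{it:mrg:theta}) from the single-colour \cref{lem:D:merge} applied to $G_j$, verify the hyperforest and properness of $\cG'$ by structural inspection, and handle the $K_\ell$-count via a partition argument based on $L'$.

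Since $G_j$ is $D$-complete, the cliques $L$ and $K$ are disjoint. Applying \cref{lem:D:merge} to $(G_j,L,w,K)$ yields $\GE(G_j')=\GE(G_j)$, $G_j'$ is $D$-complete, and $\nu(G_j')=\nu(G_j)$, from which (\ref{it:mrg:nu}) and (\ref{it:mrg:mns}) follow since the other colour classes are untouched. The $G_0$-layer is also untouched, so $\Theta(\cG')=\Theta(\cG)$, giving (\ref{it:mrg:theta}). The only nontrivial $G_j'$-cliques that differ from those of $G_j$ are: $L$ disappears ($w$ becomes isolated in $G_j'$ and the vertices of $L'$ are absorbed) and $K$ is enlarged to $K\cup L'$; thus $|\cK(\cG')|=|\cK(\cG)|-1$, settling (\ref{it:mrg:K}). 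In $I_{\cK(\cG)}$ each $v\in L'$ has degree $1$ (being a non-link vertex of the leaf-edge $L$), so the passage from $I_{\cK(\cG)}$ to $I_{\cK(\cG')}$ amounts to deleting the vertex $L$ and reattaching each $v\in L'$ as a pendant to $K\cup L'$, which preserves forest structure; hence $\cG'$ is $D$-acyclic. For properness, the newly added edges lie in $L'\times K$; since every $u\in\Theta(\cG)$ has degree $|V|-1$ in $G_0$ and $\cG$ is proper, $u$ is isolated in $G_j$, so $\Theta(\cG)\cap(L\cup K)=\es$. Consequently every edge of $L'\times K$ has both endpoints in $V\sm\Theta(\cG)$ and so avoids $E_0$ (which is covered by $\Theta(\cG)$), yielding (\ref{it:mrg:prop}).

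The main technical content is (\ref{it:mrg:ml}). I would partition copies of $K_\ell$ by their intersection with $L'$. Copies disjoint from $L'$ lie in $V\sm L'$, and since $G$ and $G'$ agree on edges with no endpoint in $L'$, they contribute equally. For copies meeting $L'$, the key observation is that every $v\in L'$ has a common outside-neighbourhood: in $G$ this is $\{w\}\cup\Theta(\cG)$ (the edge $\{w,v\}$ comes from the clique $L\subseteq G_j$, edges to $\Theta(\cG)$ come from $G_0$, and $v$ is isolated in all other colour classes since $L$ is a leaf of $\cK$), and in $G'$ it is $K\cup\Theta(\cG)$; moreover $L'\cup\{w\}\cup\Theta(\cG)$ and $L'\cup K\cup\Theta(\cG)$ are cliques in $G$ and $G'$ respectively. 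Writing $y=|\Theta(\cG)|$ and $|L|=2\kappa+1$, the number of $K_\ell$-copies meeting $L'$ is therefore $\binom{2\kappa+y+1}{\ell}-\binom{y+1}{\ell}$ in $G$ and $\binom{2\kappa+|K|+y}{\ell}-\binom{|K|+y}{\ell}$ in $G'$. The desired inequality reduces to $f(|K|+y)\ge f(y+1)$ for $f(n)=\binom{n+2\kappa}{\ell}-\binom{n}{\ell}=\sum_{i=0}^{2\kappa-1}\binom{n+i}{\ell-1}$; the latter identity shows $f$ is non-decreasing in $n$, and $|K|\ge|L|=2\kappa+1$ gives $|K|+y\ge y+1$, completing (\ref{it:mrg:ml}). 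The main obstacle I anticipate is the careful bookkeeping of which edges of $G$ survive in $G'$ — particularly showing that each edge $\{w,v\}$ with $v\in L'$ belongs only to $E_j$ — needed to pin down the outside-neighbourhoods above; once that is in place the Vandermonde comparison is routine.
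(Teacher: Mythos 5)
Your proof is correct and takes essentially the same route as the paper: items (\ref{it:mrg:nu})--(\ref{it:mrg:K}) are handled exactly as in the paper via \cref{lem:D:merge} together with the incidence-graph and $\Theta$-disjointness inspection, and for (\ref{it:mrg:ml}) your exact count of the $K_\ell$'s meeting $L'$ (using that they live inside $\Theta(\cG)\cup L$ before and $\Theta(\cG)\cup L'\cup K$ after) produces precisely the paper's expression $\binom{y+|L'|+|K|}{\ell}-\binom{y+|K|}{\ell}-\binom{y+|L|}{\ell}+\binom{y+1}{\ell}$. The only cosmetic difference is that you establish nonnegativity by telescoping $f(n)=\binom{n+|L'|}{\ell}-\binom{n}{\ell}$ into a sum of $\binom{n+i}{\ell-1}$'s (which is just a proof of \cref{lem:iep}) instead of citing that lemma, and you observe the change exactly rather than via the paper's inclusion--exclusion lower bound.
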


For the proof of \cref{lem:merge:clr}, we will use the following simple combinatorial inequality.
\begin{lemma}\label{lem:iep}
  For all non-negative integers $n,a,b,r$,
  one has
  \[ \binom{n+a+b}{r} \ge \binom{n+a}{r} + \binom{n+b}{r} - \binom{n}{r}. \]
\end{lemma}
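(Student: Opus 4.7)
My plan is to give a short combinatorial proof by inclusion--exclusion, interpreting each binomial coefficient as a count of $r$-subsets of a suitable set. First I would introduce pairwise disjoint sets $N$, $A$, $B$ of sizes $n$, $a$, $b$ respectively, and take the ground universe to be $U = N \cup A \cup B$, which has size $n + a + b$.

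With this setup the four binomial coefficients acquire transparent meanings: $\binom{n+a+b}{r}$ counts all $r$-subsets of $U$, while $\binom{n+a}{r}$ and $\binom{n+b}{r}$ count the $r$-subsets contained in $N \cup A$ and $N \cup B$ respectively, and $\binom{n}{r}$ counts those contained in $(N \cup A) \cap (N \cup B) = N$. By inclusion--exclusion, the expression
\[
  \binom{n+a}{r} + \binom{n+b}{r} - \binom{n}{r}
\]
counts precisely the $r$-subsets of $U$ that lie in $N \cup A$ or in $N \cup B$. Since every such subset is in particular an $r$-subset of $U$, this count is at most $\binom{n+a+b}{r}$, which is the desired inequality.

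There is essentially no obstacle: the combinatorial interpretation automatically handles the degenerate cases where $r$ exceeds one of the intermediate set sizes, because each binomial simply vanishes and both sides behave consistently. If a purely algebraic proof were preferred, one could instead induct on $a + b$ using Pascal's rule $\binom{m+1}{r} = \binom{m}{r} + \binom{m}{r-1}$, with the trivial base case $a = b = 0$, but the combinatorial route is shorter and more conceptual.
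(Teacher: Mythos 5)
Your proof is correct and is essentially the same as the paper's: both interpret the binomial coefficients as counts of $r$-subsets of a partitioned ground set $N\cup A\cup B$ and observe that, by inclusion--exclusion, the right-hand side counts the $r$-subsets contained in $N\cup A$ or $N\cup B$, which is at most the total on the left.
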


\begin{proof}
  Let $X=Y\cup A\cup B$ be a partition with $|Y|=n$, $|A|=a$, and $|B|=b$.
  The LHS counts the number of $r$-subsets of $X$,
  whereas the RHS counts the number of $r$-subsets of $Y\cup A$ or $Y\cup B$.
\end{proof}

\begin{proof}[Proof of \cref{lem:merge:clr}]
  According to \cref{lem:D:merge}, $G_j'$ is $D$-complete,
  hence $\cG'$ is $AD$-complete.
  \Cref{lem:D:merge} also implies that $\nu(G_j')=\nu(G_j)$.
  Since no other colour has changed,
  we deduce that $\nu(\cG')=\nu(\cG)$ (settling (\ref{it:mrg:nu}))
  and $\mns(\cG')=\mns(\cG)$ (settling (\ref{it:mrg:mns})).
  Furthermore, the $\Call{D-merge}{}$ operation
  solely modifies edges within $G_j$ and does not alter the uncoloured graph $G_0$;
  consequently, $\Theta(\cG')=\Theta(\cG)$, settling (\ref{it:mrg:theta}).

  To see that $\cG'$ is proper, note that
  all edge modifications made by $\Call{D-merge}{}$ to form $G_j'$
  involve only vertices in $L\cup K$, which are in $V\sm\Theta(\cG)$ by properness of $\cG$. 
  To show that $\cG'$ is $D$-acyclic, it suffices to show that $\cK(\cG')$ is a hyperforest.
  But this holds since $I_{\cK(\cG')}$ is obtained from $I_{\cK(\cG)}$
  by deleting the vertex $L$ and connecting $K$ with every $u\in L':=L\sm\{w\}$.
  Since any such $u$ had $L$ as its only neighbour, we have not created any cycles in this process.
  This also shows that $|\cK(\cG')|=|\cK(\cG)|-1$,
  settling (\ref{it:mrg:prop}),(\ref{it:mrg:K}).

  It remains to show that the number of copies of $K_\ell$ cannot decrease.
  To this end,
  write $y=|\Theta(\cG)|$,
  $|L|=2\lambda+1$,
  and $|K|=2\kappa+1$,
  and note that
  \[
    \begin{aligned}
      m_\ell(\cG')-m_\ell(\cG)
      &\ge \binom{y+2\kappa+2\lambda+1}{\ell}
          - \binom{y+2\kappa+1}{\ell}
          - \binom{y+2\lambda+1}{\ell}
          + \binom{y}{\ell}
          + \binom{y}{\ell-1}\\
      &= \binom{y+2\kappa+2\lambda+1}{\ell}
         - \binom{y+2\kappa+1}{\ell}
         - \binom{y+2\lambda+1}{\ell}
         + \binom{y+1}{\ell}.
    \end{aligned}
  \]
  Indeed, every $\ell$-tuple in $Y\cup K\cup L'$ is now a copy of $K_\ell$ (first term),
  but among these every tuple in $Y\cup K$ already existed in $\cG$ (second term).
  We also removed every copy of $K_\ell$ that was contained in $Y\cup L$ (third term).
  We add the fourth term to account for double removal of copies in $Y$,
  and the fifth term to account for any copy
  that was contained in $Y\cup\{w\}$ that included $w$.
  We deduce from \cref{lem:iep} that this expression is nonnegative.
  This settles (\ref{it:mrg:ml}).
\end{proof}

\begin{figure*}[t!]
  \captionsetup{width=0.879\textwidth,font=small}
  \centering
  \begin{subfigure}[t]{0.29\textwidth}
    \centering
      \includegraphics[width=\textwidth]{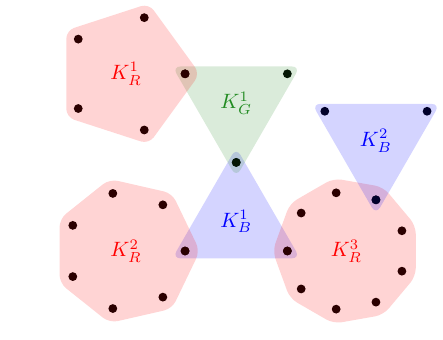}
    \caption{A $D$-acyclic coloured graph.}
    \label{fig:peel:1}
  \end{subfigure}%
  ~
  \begin{subfigure}[t]{0.04\textwidth}
    ~
  \end{subfigure}
  \begin{subfigure}[t]{0.29\textwidth}
    \centering
      \includegraphics[width=\textwidth]{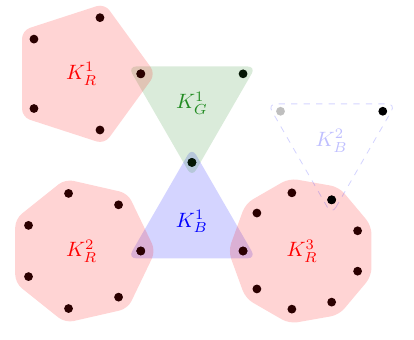}
    \caption{
      Dissolving $K_B^2$ relative to $K_R^3$.
    }
    \label{fig:peel:2}
  \end{subfigure}
  \begin{subfigure}[t]{0.04\textwidth}
  ~
  \end{subfigure}
  \begin{subfigure}[t]{0.29\textwidth}
    \centering
      \includegraphics[width=\textwidth]{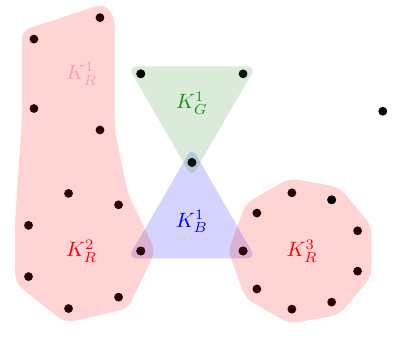}
    \caption{
      Merging $K_R^1$ into $K_R^2$.
    }
    \label{fig:peel:3}
  \end{subfigure}
  \begin{subfigure}[t]{0.29\textwidth}
    \centering
      \includegraphics[width=\textwidth]{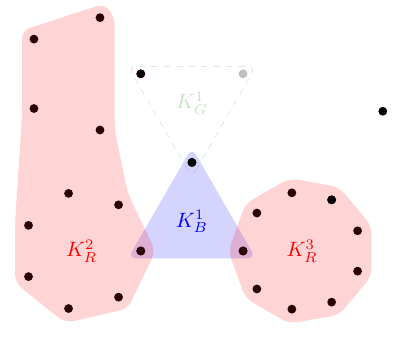}
    \caption{
      Dissolving $K_G^1$ relative to $K_B^1$.
    }
    \label{fig:peel:4}
  \end{subfigure}%
  ~
  \begin{subfigure}[t]{0.04\textwidth}
    ~
  \end{subfigure}
  \begin{subfigure}[t]{0.29\textwidth}
    \centering
      \includegraphics[width=\textwidth]{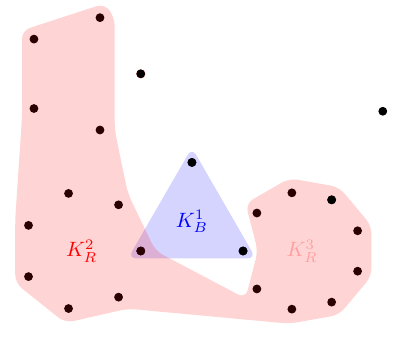}
    \caption{
      Merging $K_R^3$ into $K_R^2$.
    }
    \label{fig:peel:5}
  \end{subfigure}
  \begin{subfigure}[t]{0.04\textwidth}
  ~
  \end{subfigure}
  \begin{subfigure}[t]{0.29\textwidth}
    \centering
      \includegraphics[width=\textwidth]{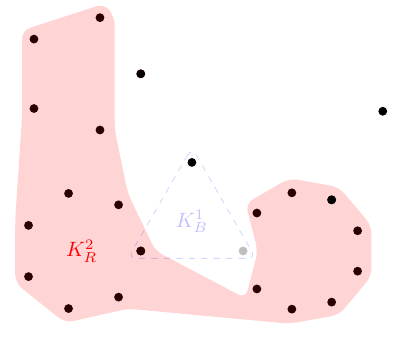}
    \caption{
      Dissolving $K_B^1$ relative to $K_R^2$.
    }
    \label{fig:peel:6}
  \end{subfigure}
  \caption{Peeling (\cref{alg:peel}).}
  \label{fig:peel}
\end{figure*}

We conclude this subsection with the analysis of the peeling algorithm.

\begin{lemma}[Peeling]\label{lem:peel}
  Let $\cG$ be a proper $D$-acyclic $q$-colouring of $G=(V,E)$.
  Then $\Call{Peel}{\cG}$ terminates.
  Letting $(\cG',S)=\Call{Peel}{\cG}$,
  we have 
  \begin{enumerate}
    \item\label{it:peel:prop} $\cG'$ is proper and $D$-acyclic;
    \item\label{it:peel:K} $|\cK(\cG')|\le 1$;
    \item\label{it:peel:ml} $m_\ell(\cG')\ge m_\ell(\cG)$;
    \item\label{it:peel:nu} $\nu(\cG')\le \nu(\cG)$;
    \item\label{it:peel:mns} $\mns(\cG')\le \mns(\cG)-|S|$;
    \item\label{it:peel:disjoint} $S\cap\Theta(\cG)=\es$;
    \item\label{it:peel:theta} $\Theta(\cG')=\Theta(\cG)\cup S$.
  \end{enumerate}
\end{lemma}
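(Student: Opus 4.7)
The plan is to proceed by induction on the number of iterations of the while loop in \cref{alg:peel}, using the Dissolution Lemma (\cref{lem:dissol}) and the Merging Lemma (\cref{lem:merge:clr}) to propagate all the required invariants. First, I would observe that each iteration of the loop strictly decreases $|\cK(\cG)|$: a Dissolve call decreases it by \cref{lem:dissol}(\ref{it:dis:K}), and a Merge call decreases it by \cref{lem:merge:clr}(\ref{it:mrg:K}). Hence the procedure terminates after at most $|\cK(\cG)|$ iterations, and upon termination $|\cK(\cG)| \le 1$, giving (\ref{it:peel:K}).

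Next, I would check that every call made inside the loop satisfies the preconditions of the relevant lemma. The if-branch is immediate by construction. For the else-branch, I need to produce a same-colour clique $K = K_j^{i'}$ with $i' \ne i$ and $|K| \ge |L|$. Here is the only delicate point: since $|\cK(\cG)| \ge 2$ and $\cK(\cG)$ is a hyperforest, it has at least two leaves; let $L'$ be a leaf distinct from $L$. As $L$ was chosen of minimum size, $|L'| \ge |L|$. If $L'$ had a colour different from that of $L$, the if-branch would have triggered with $K = L'$, a contradiction. Thus $L'$ has the same colour as $L$, and we may take $K = L'$; it is nontrivial since $|K| \ge |L| \ge 3$ (a leaf of $\cK(\cG)$ of size $1$ could only exist if $|\cK| = 1$, using properness). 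I expect this existence argument to be the main (and really only) subtlety of the proof; everything else is bookkeeping.

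Having established that each iteration is a well-defined Dissolve or Merge step, the invariants of properness and $D$-acyclicity are inherited step-by-step from \cref{lem:dissol}(\ref{it:dis:prop}) and \cref{lem:merge:clr}(\ref{it:mrg:prop}); composing over all iterations yields (\ref{it:peel:prop}). The inequalities (\ref{it:peel:ml}) and (\ref{it:peel:nu}) telescope across iterations using \cref{lem:dissol}(\ref{it:dis:ml},\ref{it:dis:nu}) and \cref{lem:merge:clr}(\ref{it:mrg:ml},\ref{it:mrg:nu}).

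Finally, for (\ref{it:peel:mns}), (\ref{it:peel:disjoint}), and (\ref{it:peel:theta}), I would label the iterations in which a Dissolve step is performed as $1, 2, \dots, r$, producing sets $T_1, \dots, T_r$ that are accumulated into $S$; Merge steps contribute nothing to $S$ and leave both $\mns$ and $\Theta$ unchanged by \cref{lem:merge:clr}(\ref{it:mrg:mns},\ref{it:mrg:theta}). Writing $\cG^{(k)}$ for the coloured graph after the $k$-th Dissolve step, I would prove by induction on $k$ that
\[
  \Theta(\cG^{(k)}) = \Theta(\cG) \cup T_1 \cup \dots \cup T_k,
  \qquad
  \mns(\cG^{(k)}) \le \mns(\cG) - (|T_1|+\dots+|T_k|),
\]
and that the sets $T_1, \dots, T_k$ are pairwise disjoint and disjoint from $\Theta(\cG)$. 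The inductive step uses \cref{lem:dissol}(\ref{it:dis:disjoint},\ref{it:dis:theta},\ref{it:dis:mns}) applied with the current $\Theta$, which by induction contains all previous $T_i$'s and $\Theta(\cG)$; so the new $T_{k+1}$ is disjoint from all of them. Setting $k = r$ and combining the disjointness with $S = T_1 \sqcup \dots \sqcup T_r$ yields exactly (\ref{it:peel:mns}), (\ref{it:peel:disjoint}) and (\ref{it:peel:theta}).
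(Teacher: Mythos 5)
Your proposal is correct and takes essentially the same approach as the paper: termination and all seven properties are obtained by telescoping the invariants of \cref{lem:dissol} and \cref{lem:merge:clr} across the loop iterations, with the returned sets $T$ tracked through the successive $\Theta$-updates to get disjointness and the $\mns$ bound. The one point you spell out in detail---that in the else-branch some other leaf of $\cK$ must have the same colour as $L$ and size at least $|L|$, and is nontrivial---is precisely the justification the paper records in the discussion preceding \cref{alg:peel} (``any other leaf can play the role of $K$''), so no new ingredient is involved.
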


\begin{proof}
  The fact that $\Call{Peel}{\cG}$ terminates,
  with $\cG'$ being proper and $D$-acyclic and $|\cK(\cG')|\le 1$,
  follows from \cref{lem:dissol}(\ref{it:dis:K}),(\ref{it:dis:prop})
  and \cref{lem:merge:clr}(\ref{it:mrg:K}),(\ref{it:mrg:prop}).
  This settles (\ref{it:peel:prop}) and (\ref{it:peel:K}).
  We deduce (\ref{it:peel:ml}) from \cref{lem:dissol}(\ref{it:dis:ml}) and \cref{lem:merge:clr}(\ref{it:mrg:ml}),
  (\ref{it:peel:nu}) from \cref{lem:dissol}(\ref{it:dis:nu}) and \cref{lem:merge:clr}(\ref{it:mrg:nu}),
  and (\ref{it:peel:mns}) from \cref{lem:dissol}(\ref{it:dis:mns}) and \cref{lem:merge:clr}(\ref{it:mrg:mns}).
  Furthermore, $S$ is disjoint from $\Theta(G)$
  because $S$ is a union of sets,
  each of which is a set $T$ returned by a $\Call{Dissolve}{}$ call;
  by \cref{lem:dissol}(\ref{it:dis:disjoint}), each such set $T$ is disjoint
  from the $\Theta$ of its input coloured graph,
  which is a superset of $\Theta(\cG)$;
  this settles (\ref{it:peel:disjoint}).
  Finally,
  \cref{lem:dissol}(\ref{it:dis:theta}) and \cref{lem:merge:clr}(\ref{it:mrg:theta})
  prove (\ref{it:peel:theta}).
\end{proof}

\subsection{Distilling} \label{sec:distil}

Finally, we can state and analyse our main algorithm (see  \cref{alg:distil,lem:distil}),
which is a combination of $3$ subroutines implementing the $3$ stages
discussed in the proof outline:
\begin{enumerate}[nosep]
\item each colour is $AD$-completed separately,
\item the GE-decompositions of all colours are simplified by decycling,
\item the peeling algorithm eliminates all but at most one clique.
\end{enumerate}

\begin{algorithm}
  \caption{Distilling}\label{alg:distil}
\begin{algorithmic}
  \Procedure{Distil}{$\cG=(G_1,\dots,G_q)$}
    \For{$j\in[q]$}
      \State $G_j\gets\Call{AD-complete}{G_j}$
      \Comment \cref{alg:AD:comp}
    \EndFor
    \State $\cG,T\gets\Call{Decycle}{\cG}$
    \Comment \cref{alg:decycle}
    \State $\cG,S\gets\Call{Peel}{\cG}$
    \Comment \cref{alg:peel}
    \State \Return $\cG,T\cup S$
  \EndProcedure
\end{algorithmic}
\end{algorithm}

\begin{lemma}[Distilling]\label{lem:distil}
  Let $\cG$ be a $q$-colouring of an $n$-vertex graph $G$.
  Suppose that no single colour class of $\cG$ contains a perfect matching, 
  and that $\mns(\cG)<n$.
  Let $\cG',S=\Call{Distil}{\cG}$,
  and write $\cG'=(G'_0,G'_1,\dots,G'_q)$.
  Then there exists $\kappa \in \NN$ for which the following hold:
  \begin{enumerate}
    \item\label{it:dist:1} In $\cG'$, at most one colour (say, colour $1$) contains edges.
    \item\label{it:dist:K} The graph $G'_1$ is the disjoint union
    of a clique $K$ and a (possibly empty) set of isolated vertices,
   where $K$ is of size $2\kappa+1$ and is disjoint from $S$.
    \item\label{it:dist:ml} $m_\ell(\cG')\ge m_\ell(\cG)$.
    \item\label{it:dist:nu} $\nu(\cG)\ge \nu(\cG')=(\kappa,0,\dots,0)$.
    \item\label{it:dist:mns} $\mns(\cG)\ge \mns(\cG')+|S| = \kappa+|S|$.
    \item\label{it:dist:vc} $S$ is a cover for $E(G_0')$.
    \item\label{it:dist:cc} The underlying graph of $\cG'$ is a clique-cone graph
      with clique set $K$ and cone set $S$.
  \end{enumerate}
\end{lemma}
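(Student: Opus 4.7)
The plan is to track the coloured graph through the three subroutines and combine the previously established lemmas. For the initial loop, the hypothesis that no $G_j$ has a perfect matching gives $\nu(G_j) < n/2$, so each application of \cref{alg:AD:comp} is valid and, by \cref{cor:AD:comp}, replaces $G_j$ by an $AD$-complete supergraph with the same matching number. After this loop the updated $\cG_1$ is $AD$-complete; since no uncoloured edges were introduced, $\Theta(\cG_1) = \es$ vacuously, and $\cG_1$ is proper and $\Theta$-complete. Writing $\cG_2, T$ for the output of decycling applied to $\cG_1$ and $\cG', S_{\mathrm{peel}}$ for the output of peeling applied to $\cG_2$, I would then invoke \cref{lem:decycle,lem:peel}: concatenating their bounds yields $m_\ell(\cG') \ge m_\ell(\cG)$, $\nu(\cG') \le \nu(\cG)$, $\mns(\cG') \le \mns(\cG) - |T| - |S_{\mathrm{peel}}|$, $\Theta(\cG') = T \cup S_{\mathrm{peel}} = S$, and $\cG'$ is proper and $D$-acyclic with $|\cK(\cG')| \le 1$. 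The cover property~(\ref{it:dist:vc}) is then immediate from properness of $\cG'$.

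For the structural claims, each $G_j'$ is $D$-complete, hence a disjoint union of odd cliques, and $|\cK(\cG')| \le 1$ forces at most one nontrivial clique across all colours. If such a nontrivial clique exists, relabel its colour as $1$ and take it as $K$, with $|K| = 2\kappa + 1$ and $\kappa \ge 1$; otherwise set $\kappa = 0$ and choose any $v \in V \sm S$ as $K$, which is possible because $|S| \le \mns(\cG) < n$ by hypothesis. In either case $G_1'$ is the disjoint union of $K$ and isolated vertices, every $G_j'$ with $j \ge 2$ is edgeless, and consequently $\nu(G_1') = \kappa$ and $\nu(G_j') = 0$ for $j \ge 2$. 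This establishes~(\ref{it:dist:1}), the size assertion in~(\ref{it:dist:K}), and the matching identities in~(\ref{it:dist:nu}) and~(\ref{it:dist:mns}).

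The main subtle point, which I expect to be the chief obstacle, is verifying the disjointness $K \cap S = \es$ when $\kappa \ge 1$. Suppose $v \in K \cap S$ and pick a neighbour $u$ of $v$ in $K$, which exists since $|K| \ge 3$. If $G_0'$ is not complete, then $v \in \Theta(\cG')$ forces $d_{G_0'}(v) = n - 1$, so $\{u, v\} \in E_0' \cap E_1'$, contradicting properness of $\cG'$. If $G_0'$ is complete, then properness forces $E_j' = \es$ for every $j \ge 1$, contradicting $|K| \ge 3$. For $\kappa = 0$ the disjointness was built into the choice of $K$.

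It remains to verify the clique-cone structure~(\ref{it:dist:cc}). The coloured edges of $\cG'$ are precisely $E(K)$, so it suffices to show $E_0' = \nabla_S$. The inclusion $E_0' \subseteq \nabla_S$ follows since $S = \Theta(\cG')$ is a cover for $E_0'$. For the reverse inclusion, if $G_0'$ is not complete then every $v \in S$ satisfies $d_{G_0'}(v) = n - 1$ by the definition of $\Theta$, so every edge incident to $v$ lies in $E_0'$, whence $\nabla_S \subseteq E_0'$; if $G_0'$ is complete, the convention $|S| \ge n - 1$ forces $\nabla_S = \binom{V}{2} = E_0'$. Since $K \cap S = \es$, the edge set of the underlying graph equals $E(K) \cup \nabla_S$, which is precisely the clique-cone graph $G_{n, |K|, |S|}$ with clique set $K$ and cone set $S$.
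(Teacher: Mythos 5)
Your proposal is correct and follows essentially the same route as the paper: feed the coloured graph through AD-completion, decycling and peeling, chain the conclusions of \cref{cor:AD:comp,lem:decycle,lem:peel}, and then read off the structure from properness, $D$-acyclicity and $|\cK(\cG')|\le 1$, with the same use of $\Theta(\cG')=T\cup S'$ and $\mns(\cG)<n$ to get $K\cap S=\es$ and $S\subsetneq V$. Your verification of $K\cap S=\es$ and of the clique-cone structure (via $E_0'=\nabla_S$) just spells out more explicitly what the paper states tersely, so there is no substantive difference.
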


\begin{proof}
  The $\Call{Distil}{}$ procedure involves three main stages:
  AD-completion (of each colour graph),
  Decycling, and Peeling.
  Write $\cG=(G_1,\dots,G_q)$.
  \begin{description}
    \item[AD-completion] 
      For $j\in[q]$, write $G_j^\circ=\Call{AD-complete}{G_j}$,
      and let $\cG^{\circ}=(G_1^\circ,\dots,G_q^\circ)$.
      Recall that $\nu(G_j)<n/2$ for every $j\in[q]$.
      Thus, by \cref{cor:AD:comp}, we know that
      (a1) $G_j$ is a subgraph of $G_j^\circ$ for every $j\in[q]$,
      implying that $m_\ell(\cG^\circ)\ge m_\ell(\cG)$;
      (a2) $\nu(G_j^\circ)=\nu(G_j)$,
      implying that $\nu(\cG^\circ)=\nu(\cG)$ and $\mns(\cG^\circ)=\mns(\cG)$;
      and (a3) $\cG^\circ$ is $AD$-complete.

    \item[Decycling]
      By (a3),
      and since $\Theta(\cG^\circ)=\es$,
      the assertions that the input for \Call{Decycle}{} is $\Theta$-complete and proper hold.
      Let $\cG^*,T=\Call{Decycle}{\cG^\circ}$.
      By \cref{lem:decycle},
      (b1) $\cG^*$ is proper and $D$-acyclic;
      (b2) $m_\ell(\cG^*)\ge m_\ell(\cG^\circ)$;
      (b3) $\nu(\cG^*) \le \nu(\cG^\circ)$;
      (b4) $\mns(\cG^*) \le \mns(\cG^\circ)-|T|$;
      and (b5) $\Theta(\cG^*)= T$.
           
    \item[Peeling]
      By (b1), the assertion that the input for \Call{Peel}{} is proper and $D$-acyclic is met.
      We may write $\cG',S'=\Call{Peel}{\cG^*}$,
      noting that $S=T\cup S'$.
      By \cref{lem:peel},
      (c1) $\cG'$ is proper and $D$-acyclic;
      (c2) $|\cK(\cG')|\le 1$;
      (c3) $m_\ell(\cG')\ge m_\ell(\cG^*)$;
      (c4) $\nu(\cG')\le \nu(\cG^*)$;
      (c5) $\mns(\cG') \le \mns(\cG^*)-|S'|$;
      (c6) $S'\cap\Theta(\cG^*)=\es$;
      and (c7) $\Theta(\cG')=S'\cup \Theta(\cG^*)$.
  \end{description}

  Now, (c1) and (c2) imply that there is at most $1$ colour with edges,
  and in this colour there is at most $1$ connected component of size greater than $1$,
  which is an odd clique.
  Assume, without loss of generality, that $G_2',\dots,G_q'$ are all empty.
  This settles (\ref{it:dist:1}).
  Statements (a1), (b2), and (c3) imply (\ref{it:dist:ml}).
  By (a2), (b4), and (c5) we deduce $\mns(\cG')\le\mns(\cG)-|T|-|S'|$.
  By (b5) and (c6) we deduce $S'\cap T=\es$, hence
  $\mns(\cG')\le\mns(\cG)-|T\cup S'|=\mns(\cG)-|S|$,
  which implies (d1) $\mns(\cG)\ge\mns(\cG')+|S|$.
  This, combined with the assumption that $\mns(\cG)<n$,
  implies (d2) $S\subsetneq V$.
  Now, note that (b5) and (c7) imply that
  (d3) $\Theta(\cG')=S'\cup\Theta(\cG^*)=S'\cup T=S$.
  If $G_1'$ has a nontrivial clique, denote it by $K$,
  and note that since $\cG'$ is proper (by (c1)), $K$ is disjoint from $\Theta(\cG')$,
  and hence from $S$.
  Otherwise, since $S\subsetneq V$ (by (d2)),
  let $K=\{v\}$ for $v\notin S$, and again $K$ is disjoint from $S$.
  Write $|K|=2\kappa+1$ for an integer $\kappa\ge 0$;
  this settles (\ref{it:dist:K}).
  (\ref{it:dist:nu}) then
  follows from (a2), (b3), (c4), (\ref{it:dist:1}), and~(\ref{it:dist:K}),
  and~(\ref{it:dist:mns}) follows from (d1) and~(\ref{it:dist:nu}).
  Finally,
  let $G'$ be the underlying graph of $\cG'$.
  Partition the vertex set $V$ into $S\cup K\cup Z$.
  Evidently, $G'[K]$ is a clique and $S$ is (by (d3)) a cone-set in $G'$.
  (c1) and (d3) also imply that $S$ is a cover for $E(G'_0)$
  (settling~(\ref{it:dist:vc})),
  hence there are no uncoloured edges between $K,Z$;
  additionally, by (\ref{it:dist:1}), there are no coloured edges between $K,Z$.
  We deduce $E_{G'}(K,Z)=\es$, implying that $G'$ is a clique-cone graph
  with clique set $K$ and cone set $S$.
  This settles~(\ref{it:dist:cc}).
\end{proof}

\begin{proof}[Proof of \cref{thm:max}]
  We start by disposing of the cases $\ell \le n \le \tmax+\Lambda_\vt$.
  In the introduction, we described a colouring of the complete graph $K_{\tmax+\Lambda_\vt}$ that is $\vt K_2$-free.
  Restricting this colouring to the vertex set of $K_n$ shows that $K_n \nto \vt K_2$.
  Since $K_n$ contains the maximum possible number of $K_\ell$ copies,
  it is the extremal graph in this range.
  As $K_n$ is a clique-cone graph (e.g., $G_{n,1,n-1}$),
  the theorem holds.  We may therefore assume that $n > \tmax+\Lambda_\vt$.

  Let $G$ be a graph with $m_\ell(G)=\grt_\ell(n\to\vt K_2)$,
  and let $\cG=(G_1,\dots,G_q)$ be a $\vt K_2$-free $q$-colouring of $G$.
  In particular, $\mns(\cG)\le\Lambda_\vt$.
  Note that $n>\tmax+\Lambda_\vt$ implies $n>2(\tmax-1)$ and $n>\Lambda_\vt$.
  In particular, no $G_j$ admits a perfect matching and $n>\mns(\cG)$.
  
  Let $\cG',S=\Call{Distil}{\cG}$ and $G'$ be the underlying graph of $\cG'$.
  By \cref{lem:distil}(\ref{it:dist:cc}),
  $G'$ is a clique-cone graph $G_{n,2\kappa+1,|S|}$  for some $\kappa \in \NN$.
  By \cref{lem:distil}(\ref{it:dist:ml})  and  \cref{lem:distil}(\ref{it:dist:mns}) we have
    $m_\ell(G') \ge m_\ell(G)$ and $\kappa+|S|\le\mns(\cG)$.
  We now colour every uncoloured edge of $\cG'$, obtaining $\cG^+=(G_1^+,\dots,G_q^+)$, as follows.
  Let $s_1=t_1-1-\kappa$ and $s_j=t_j-1$ for $j=2,\dots,q$,
  so $\sum_j s_j = \Lambda_\vt-\kappa\ge \mns(\cG)-\kappa\ge |S|$.
  Let $S=S_1\cup\dots\cup S_q$ be a partition of $S$ (where parts may be empty)
  such that $|S_j|\le s_j$ for $j\in[q]$.
  Note that by \cref{lem:distil}(\ref{it:dist:vc}), $S$ is a cover for $E(G_0')$.
  Let $F_j$ be the set of edges incident to a vertex of $S_j$,
  and colour every edge of $F_j$ by colour $j$,
  thus colouring every uncoloured edge.
  Note that $m_\ell(\cG^+)=m_\ell(\cG')\ge m_\ell(\cG)$,
  and the underlying graph of $\cG^+$ is $G'$,
  which is a clique-cone graph $G_{n,x,y}$ with $x=2\kappa+1\in[1,2\tmax-1]$
  (since $\kappa\le \tmax-1$, by \cref{lem:distil}(\ref{it:dist:nu})).
  Finally,
  for every $j\in[q]$, by \cref{lem:cover},
  $\nu(G_j^+) \le \nu(G_j')+\tau(F_j)$.
  Since $\tau(F_j)\le s_j$, we have $\nu(\cG^+)\le\nu(\cG')+(s_1,\dots,s_q)=\vt-\one_q$.
\end{proof}

\section{Proof of \cref{thm:grt}}\label{sec:phi}
Now we will deduce \cref{thm:grt} from \cref{thm:max}.
Let $\vt\in\NN_+^q$ and $n\ge\max\{\ell,\tmax+\Lambda_\vt\}$ .
By \cref{thm:max}, the value of $\grt_\ell(n \to (t_1K_2,\dots,t_qK_2))$
is given by the maximum of $m_\ell(G_{n,x,y}) = \phi_{\ell,n}(x,y)$ over all $(x,y)$ in 
\[ \cA := \{ (x,y) \in \NN^2:\ x+y\le n,\ 1\le x\le 2\tmax-1,\
 G_{n,x,y}\nto (t_1K_2,\dots,t_qK_2). \}\]
To deduce \cref{thm:grt}, we will show that $\cA$ is contained in the following simpler
set $\cA_0$, and that the maximum value of $\phi:=\phi_{\ell,n}$ on $\cA_0$
is achieved at one of two points, which belong to $\cA$ and correspond
to the sparse and dense constructions discussed in the introduction.

\begin{claim}\label{cl:A}
Writing $\Lambda:=\Lambda_\vt$, we have
  \[
    \cA\subseteq\cA_0 := \{
      (x,y)\in \NN^2 :\  y\le \Lambda-\floor{x/2}
    \}.
  \]
\end{claim}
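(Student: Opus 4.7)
The plan is to take any $(x,y) \in \cA$ and fix a $\vt K_2$-free $q$-colouring $\cG = (G_1, \ldots, G_q)$ of $G := G_{n,x,y}$, with clique set $X$, cone set $Y$, and independent remainder $Z$ of size $n - x - y$. By assumption $\nu(G_j) \le t_j - 1$ for each $j$, hence $\mns(\cG) := \sum_j \nu(G_j) \le \Lambda$. The overarching principle I will exploit is that for any subgraph $H \subseteq G$ the restricted colouring $\cG|_H$ is still $\vt K_2$-free, and any matching of $H$ decomposes among colour classes by assigning each edge one of its colours; this yields
\[
  \nu(H) \;\le\; \mns(\cG|_H) \;\le\; \mns(\cG) \;\le\; \Lambda.
\]
It therefore suffices to exhibit a subgraph $H$ of $G$ with $\nu(H) \ge y + \lfloor x/2 \rfloor$.

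In the principal regime $n \ge x + 2y$ (equivalently $|Z| \ge y$), I pick $Z' \subseteq Z$ with $|Z'| = y$ and take $H := G[X \cup Y \cup Z']$. In $H$ the clique $X$ of size $x$ survives, the bipartite graph between $Y$ and $Z'$ is complete, and $Z'$ is independent. Matching $Y$ bijectively with $Z'$ yields $y$ disjoint edges, and pairing up the vertices within $X$ yields a further $\lfloor x/2 \rfloor$ disjoint edges; together they form a matching of the required size $y + \lfloor x/2 \rfloor$ in $H$, and the principle above finishes the proof.

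The hard part will be the complementary regime $n < x + 2y$, where $|Z| < y$ and $\nu(G_{n,x,y}) = \lfloor n/2 \rfloor$ is already strictly smaller than $y + \lfloor x/2 \rfloor$; so no subgraph of $G$ can realise a matching of the required size and the strategy above must be supplemented. Here I would combine Cockayne--Lorimer applied to the clique $K_{x+y} \subseteq G[X \cup Y]$, which gives $x + y \le \tmax + \Lambda$, with the disjoint-subgraph bound $\lfloor x/2 \rfloor + |Z| \le \Lambda$ obtained by applying the principle to $H = K_x \sqcup G[Y \cup Z]$ (whose matching number is $\lfloor x/2 \rfloor + \min(y,|Z|) = \lfloor x/2 \rfloor + |Z|$). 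Together with the constraints $x \le 2\tmax - 1$ and the hypothesis $n \ge \tmax + \Lambda_\vt$, these inequalities should force $y + \lfloor x/2 \rfloor \le \Lambda$ after a short arithmetic case analysis, possibly aided in the remaining extremal configurations by an appeal to the structural theorem \cref{thm:max} applied to a distillation of $\cG$.
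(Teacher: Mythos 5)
Your ``principal regime'' is precisely the paper's proof: the paper notes $\nu(G_{n,x,y})=y+\floor{x/2}$ and concludes $y+\floor{x/2}\le\sum_j\nu(G_j)\le\Lambda$, which is your matching $Y\to Z'$ plus $\floor{x/2}$ edges inside $X$, distributed over the colour classes. You are right that the identity for $\nu(G_{n,x,y})$ needs the independent part to be large (in general $\nu(G_{n,x,y})=\min\{\floor{n/2},\,y+\floor{x/2}\}$), so your case split touches a point the paper's one-line proof passes over silently.

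However, your complementary regime $n<x+2y$ is a genuine gap, not a proof: ``should force \dots after a short arithmetic case analysis, possibly aided \dots by an appeal to \cref{thm:max}'' is a plan, and the ingredients you list provably cannot close it. Take $q=2$, $\vt=(2,2)$ (so $\tmax=2$, $\Lambda=2$), $\ell=2$ and $n=4=\tmax+\Lambda$, which satisfies the standing hypothesis $n\ge\max\{\ell,\tmax+\Lambda\}$. Then $(x,y)=(1,3)$ has $x+y\le n$, $1\le x\le 2\tmax-1$, and $G_{4,1,3}=K_4\nto(2K_2,2K_2)$ (colour a triangle with colour $1$ and the star at the fourth vertex with colour $2$), so $(1,3)\in\cA$; it also satisfies your Cockayne--Lorimer bound $x+y\le\tmax+\Lambda$ and your disjoint-subgraph bound, yet $y=3>2=\Lambda-\floor{x/2}$. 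So in the regime you call hard, the asserted containment can actually \emph{fail} at $n=\tmax+\Lambda$ (this boundary slip is in fact present in the paper's own one-line proof as well, though it does not affect \cref{thm:grt}, since at $n=\tmax+\Lambda$ the dense point $(2\tmax-1,\Lambda-\tmax+1)$ already yields $K_n$ and hence $\binom{n}{\ell}$). Consequently no arithmetic consequence of the facts you list can establish the claim there; a complete argument must either restrict to $n>\tmax+\Lambda$ --- where a genuine argument is still required and not supplied (e.g.\ a per-colour edge-count or Gallai--Edmonds structure argument, since the subgraph-matching principle alone only gives $\floor{n/2}\le\Lambda$) --- or show separately that such boundary points cannot affect the maximum of $\phi_{\ell,n}$. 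A small additional slip: $\nu$ of the spanning subgraph with edge set $E(G[X])\cup E(G[Y\cup Z])$ is $\floor{x/2}+\floor{(y+|Z|)/2}$ when $|Z|<y$, not $\floor{x/2}+|Z|$; the corrected (stronger) inequality still falls short of $y+\floor{x/2}\le\Lambda$, as the example shows.
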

\begin{proof}
  First note that $\nu(G_{n,x,y})=y+\floor{x/2}$.
  Let $(x,y)\in\cA$.
  By definition, there exists a $\vt K_2$-free colouring $\cG=(G_1,\dots,G_q)$ of $G_{n,x,y}$.
  In particular, $\nu(G_j)\le t_j-1$ for all $j\in[q]$.
  Thus, $y+\floor{x/2}=\nu(G_{n,x,y})\le \sum_j \nu(G_j) \le \Lambda$,
  implying $y\le\Lambda-\floor{x/2}$, hence $(x,y)\in\cA_0$.
\end{proof}

Next we observe that $\phi_{\ell,n}(x,y)=m_\ell(G_{n,x,y})$ is monotone in both $x$ and $y$,
as increasing either corresponds to adding edges to the underlying graph.
Thus, the maximum of $\phi$ over $\cA_0$ is attained on its upper boundary,
defined by $y(x)=\Lambda-\floor{x/2}$. Moreover, the maximum has $x$ odd, 
as if $x$ is even then $\phi_{\ell,n}(x,y(x))<\phi_{\ell,n}(x+1,y(x))=\phi_{\ell,n}(x+1,y(x+1))$.
We thus consider
\[
  g(\kappa)
    =\phi(2\kappa+1,y(2\kappa+1))
    =\phi(2\kappa+1,\Lambda-\kappa), \text{ where } \kappa=\floor{(x-1)/2}.
\]

\begin{claim}\label{cl:g:convex}
  $g(\kappa)$ is convex in $\NN\cap[0,\tmax-1]$.
\end{claim}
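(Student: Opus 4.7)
My approach is direct: I verify convexity by computing the discrete second difference $\Delta^2 g(\kappa) := g(\kappa+2) - 2g(\kappa+1) + g(\kappa)$ and displaying it as a sum of nonnegative terms, which is the standard characterisation of discrete convexity.

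To streamline the calculation I would first introduce the substitutions $u := \Lambda + \kappa + 1$ and $v := \Lambda - \kappa$, so that along the relevant boundary $u = x+y$, $v = y$, $n-x-y = n-u$, and
\[
  g(\kappa) = \binom{u}{\ell} + (n-u)\binom{v}{\ell-1}.
\]
Since $\kappa\mapsto\kappa+1$ sends $(u,v)\mapsto(u+1,v-1)$, a single application of Pascal's identity in its two standard forms $\binom{m+1}{r}-\binom{m}{r}=\binom{m}{r-1}$ and $\binom{m}{r}=\binom{m-1}{r}+\binom{m-1}{r-1}$ yields, after routine cancellation,
\[
  \Delta g(\kappa) := g(\kappa+1) - g(\kappa) = \binom{u}{\ell-1} - \binom{v-1}{\ell-1} - (n-u)\binom{v-1}{\ell-2}.
\]

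Taking a second difference and applying Pascal's identity once more to each piece, the first term collapses to $\binom{u}{\ell-2}$, the second contributes $\binom{v-2}{\ell-2}$, and the coefficient term—after using $\binom{v-1}{\ell-2} = \binom{v-2}{\ell-2} + \binom{v-2}{\ell-3}$ to absorb the shift in $n-u$—contributes $\binom{v-2}{\ell-2} + (n-u)\binom{v-2}{\ell-3}$. Collecting,
\[
  \Delta^2 g(\kappa) = \binom{\Lambda+\kappa+1}{\ell-2} + 2\binom{\Lambda-\kappa-2}{\ell-2} + (n-\Lambda-\kappa-1)\binom{\Lambda-\kappa-2}{\ell-3}.
\]

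The final task, which I anticipate as the only real point of care, is to verify that every term is nonnegative on the relevant range. Discrete convexity of $g$ on $\NN\cap[0,\tmax-1]$ amounts to $\Delta^2 g(\kappa)\ge 0$ for $\kappa\in\{0,\dots,\tmax-3\}$, the cases $\tmax\le 2$ being vacuous. For such $\kappa$ I use two elementary observations: (i) $\Lambda\ge\tmax-1$, since $t_{\max}-1$ is one of the summands of $\Lambda=\sum_j(t_j-1)$; combined with $\kappa\le\tmax-3$ this yields $\Lambda-\kappa-2\ge 0$, so every binomial above is well-defined and nonnegative; and (ii) the standing hypothesis $n\ge\tmax+\Lambda$, which yields $n-\Lambda-\kappa-1\ge 2>0$. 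With all three summands nonnegative, the claim follows. The heart of the argument is purely algebraic bookkeeping with Pascal's identity; no deeper obstacle arises.
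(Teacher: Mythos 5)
Your proposal is correct, and it is essentially the paper's argument: both compute the same first difference $\Delta g(\kappa)=\binom{\Lambda+\kappa+1}{\ell-1}-\binom{\Lambda-\kappa-1}{\ell-1}-(n-\Lambda-\kappa-1)\binom{\Lambda-\kappa-1}{\ell-2}$ via Pascal's identity and then verify discrete convexity by elementary binomial bookkeeping, using $n\ge\tmax+\Lambda$ and $\Lambda\ge\tmax-1$ in the same way. The only (cosmetic) difference is that the paper concludes by noting $\Delta g$ is increasing because its positive part is increasing and its subtracted parts are nonnegative and decreasing, whereas you take one further difference and exhibit $\Delta^2 g(\kappa)=\binom{\Lambda+\kappa+1}{\ell-2}+2\binom{\Lambda-\kappa-2}{\ell-2}+(n-\Lambda-\kappa-1)\binom{\Lambda-\kappa-2}{\ell-3}$ explicitly as a sum of nonnegative terms; your formula and range checks are accurate.
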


\begin{proof}
  For $0\le\kappa\le \tmax-2$,
  let $\Delta g(\kappa)=g(\kappa+1)-g(\kappa)$.
  Then
  \begin{equation*}
    \begin{aligned}
      \Delta g(\kappa)
      &= \phi(2\kappa+3,\Lambda-\kappa-1)
        - \phi(2\kappa+1,\Lambda-\kappa)\\
      &= \binom{\kappa+2+\Lambda}{\ell}+\binom{\Lambda-\kappa-1}{\ell-1}(n-\kappa-2-\Lambda)\\
      &\phantom{{}={}} -
         \binom{\kappa+1+\Lambda}{\ell}-\binom{\Lambda-\kappa}{\ell-1}(n-\kappa-1-\Lambda)\\
      &= \left[\binom{\kappa+2+\Lambda}{\ell}-\binom{\kappa+1+\Lambda}{\ell}\right]\\
      &\phantom{{}={}} -
        \left[
         \binom{\Lambda-\kappa}{\ell-1}(n-\kappa-1-\Lambda)
         -\binom{\Lambda-\kappa-1}{\ell-1}(n-\kappa-2-\Lambda)
         \right]\\
      &= \overbrace{\binom{\kappa+1+\Lambda}{\ell-1}}^A
        -\left[
         \overbrace{\binom{\Lambda-\kappa-1}{\ell-2}}^B
         \overbrace{(n-\kappa-1-\Lambda)}^C
         +\overbrace{\binom{\Lambda-\kappa-1}{\ell-1}}^D
         \right].
    \end{aligned}
  \end{equation*}
  As $n\ge \tmax+\Lambda$ and $\kappa\le \tmax-2$,
  we have $C=n-\kappa-1-\Lambda>0$.
  Now, since $\binom{x}{\ell}$ is monotone increasing in $x$,
  we deduce that $A$ is increasing (in $\kappa$),
  and that $B$, $C$, and $D$ are decreasing (and nonnegative).
  Therefore, $\Delta g(\kappa)=A-BC-D$ is increasing, hence $g(\kappa)$ is convex.
\end{proof}

\begin{proof}[Proof of \cref{thm:grt}]
  As $\cA\subseteq\cA_0$  by \cref{cl:A}, we have 
  $\max_{(x,y)\in\cA}\phi(x,y)\le\max_{(x,y)\in\cA_0}\phi(x,y)$.
  By \Cref{cl:g:convex} and the preceding discussion we have
  $\max_{(x,y)\in\cA_0} \phi(x,y) =
   \max_{\kappa\in\{0,\dots,\tmax-1\}}g(\kappa)=\max\{g(0),g(\tmax-1)\}$.
  Also, $\cA$ contains $(1,\Lambda)$ and $(2\tmax-1,\Lambda-\tmax+1)$,
  as these points correspond to the sparse and the dense constructions
  discussed in the introductions.  We deduce that
  $\max_{(x,y)\in\cA}\phi(x,y)=\max\{g(0),g(\tmax-1)\}$,
  which proves \cref{thm:grt}.
\end{proof}

\section{Concluding remarks}
\paragraph{The inverse problem}
We can reformulate the diagonal case (all $t_i$ equal) 
of our result as the following inverse problem.
Given a graph $G$ with $m$ copies of $K_\ell$,
determine the minimum possible value of $\nu_q(G)$,
defined as the largest integer $k$ such that $G \to_q kK_2$.
While our results give an implicit characterisation of the solution,
it seems difficult to give an explicit formula for general $\ell$.
However, when $\ell=2$, we can give an explicit asymptotic solution, as follows.

For an integer $q\ge 1$ and edge density $\alpha\in[0,1]$, define\footnote{
The limit  $x\to q$ is only needed when $q=3$ due to the removable singularity.
}
\[
  \mathfrak{s}_q(\alpha)=\frac{1-\sqrt{1-\alpha}}{q},\qquad
  \mathfrak{d}_q(\alpha) =
  \lim_{x\to q}\frac{x-1-\sqrt{1-2x+x^2-\alpha(x^2-2x-3)}}{x^2-2x-3}\ ,
\]
and let $M(q)=4(q^2+3q)/(2q+3)^2$.
Here, $\mathfrak{s}_q$ corresponds to the (asymptotic, normalised) size
of the guaranteed monochromatic matching in the sparse construction,
$\mathfrak{d}_q$ corresponds to the size
of the guaranteed monochromatic matching in the dense construction,
and $M(q)$ is the value of $\alpha$ for which
$\mathfrak{s}_q(\alpha)=\mathfrak{d}_q(\alpha)$. The
expression $1-\sqrt{1-\alpha}$ in the definition of $\mathfrak{s}_q(\alpha)$
is the guaranteed size of a matching in the sparse construction;
so $\mathfrak{s}_q$ is obtained from it by the pigeonhole principle.

\begin{theorem}\label{thm:l=2:asymp}
  Let $q \in \NN_+$ and $G$ be a graph on $n$ vertices
  with $\alpha\binom{n}{2}$ edges. Then
  \[
    \nu_q(G) \ge \min\{\mathfrak{s}_q(\alpha),\mathfrak{d}_q(\alpha)\} \cdot n - o(n).
  \]
\end{theorem}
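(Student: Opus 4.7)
The plan is to reduce \cref{thm:l=2:asymp} to the diagonal $\ell=2$ case of \cref{thm:grt} (equivalently, the Omidi--Raeisi formula~\eqref{eq:h:RT}) and then invert two quadratic asymptotics. Given $G$ with $\alpha\binom{n}{2}$ edges, I would set $k=\nu_q(G)+1$, so $G\nto_q kK_2$ by definition of $\nu_q$. If $\nu_q(G)>(n-1)/(q+1)$ then $\nu_q(G)\ge n/(q+1)-O(1)$ and the conclusion holds at once, since $\min\{\mathfrak{s}_q(\alpha),\mathfrak{d}_q(\alpha)\}\le 1/(q+1)$ for every $\alpha\in[0,1]$. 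Otherwise $n\ge(q+1)k-q=\tmax+\Lambda_\vt$ with $\vt=(k,\dots,k)$, so \cref{thm:grt} applies and yields
\[
  \alpha\tbinom{n}{2}=e(G)\le \max\bigl\{\phi_{2,n}(1,q(k-1)),\ \phi_{2,n}(2k-1,(q-1)(k-1))\bigr\}.
\]

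Next I would expand asymptotically. Writing $k=\beta n$ and using $\phi_{2,n}(x,y)=\binom{x+y}{2}+y(n-x-y)$, a routine calculation yields
\[
  \phi_{2,n}(1,q(k-1))\big/\tbinom{n}{2}\to f_s(\beta):=q\beta(2-q\beta),
\]
\[
  \phi_{2,n}(2k-1,(q-1)(k-1))\big/\tbinom{n}{2}\to f_d(\beta):=2(q-1)\beta-(q^2-2q-3)\beta^2.
\]
Both $f_s$ and $f_d$ are strictly increasing on the relevant interval $[0,1/(q+1)]$: the first because it peaks at $1/q$, and the second because for $q\ge 4$ the critical point $(q-1)/((q+1)(q-3))$ exceeds $1/(q+1)$, while $f_d$ is increasing on $[0,\infty)$ when $q\in\{1,2,3\}$ (in particular $f_d(\beta)=4\beta^2$ at $q=1$ and $f_d(\beta)=4\beta$ at $q=3$).

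Combining these yields $\alpha\le \max\{f_s(\beta),f_d(\beta)\}+o(1)$, so $\beta\ge \min\{f_s^{-1}(\alpha),f_d^{-1}(\alpha)\}-o(1)$. To finish, I would compute the two inverses by solving $(q\beta)^2-2(q\beta)+\alpha=0$ and $(q^2-2q-3)\beta^2-2(q-1)\beta+\alpha=0$ and taking the smaller root of each; these recover $\mathfrak{s}_q(\alpha)$ and $\mathfrak{d}_q(\alpha)$ verbatim. The only subtlety worth flagging is the apparent singularity of $\mathfrak{d}_q$ at $q=3$, where $q^2-2q-3$ vanishes and $f_d$ degenerates to a linear function; the limit definition of $\mathfrak{d}_q$ then recovers $f_d^{-1}(\alpha)=\alpha/4$ via L'H\^opital. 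The substantive content of the theorem is carried entirely by \cref{thm:grt}, and there is no real obstacle beyond these routine algebraic checks and the small boundary-case analysis in the first paragraph.
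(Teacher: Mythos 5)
Your reduction to the diagonal $\ell=2$ case of \cref{thm:grt} (setting $k=\nu_q(G)+1$, handling $n<(q+1)k-q$ separately, then inverting the two quadratic asymptotics $f_s,f_d$ via their monotonicity on $[0,1/(q+1)]$) is exactly the intended derivation — the paper states \cref{thm:l=2:asymp} without a written proof, describing $\mathfrak{s}_q$ and $\mathfrak{d}_q$ precisely as the inverted sparse and dense edge counts — and your boundary case and monotonicity checks are correct. One cosmetic slip: for $q\in\{1,2\}$ the coefficient $q^2-2q-3$ is negative, so the root recovering $\mathfrak{d}_q(\alpha)$ is the positive (larger) root of the quadratic rather than the smaller one; this matches the paper's formula anyway and does not affect the argument.
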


\begin{figure}[t!]
\captionsetup{width=0.879\textwidth,font=small}
  \centering
  \begin{subfigure}[t]{0.48\textwidth}
    \centering
    \hspace*{\fill}%
    \includegraphics{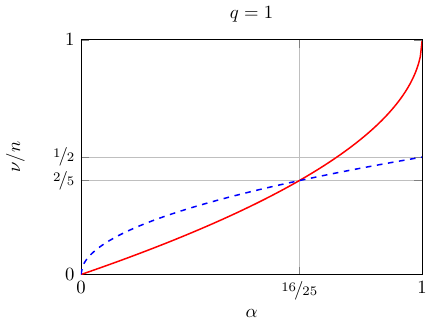}
  \end{subfigure}
  \begin{subfigure}[t]{0.48\textwidth}
    \centering
    \hspace*{\fill}%
    \includegraphics{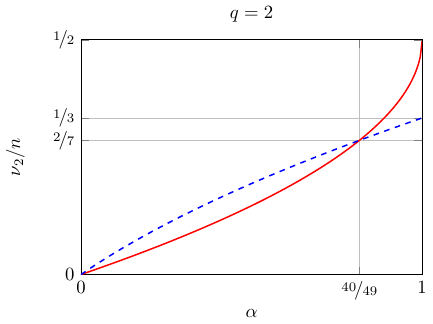}
  \end{subfigure}
  \hfill\\
  \begin{subfigure}[t]{0.48\textwidth}
    \centering
    \hspace*{\fill}%
    \includegraphics{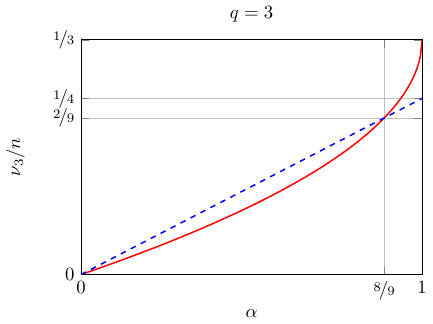}
  \end{subfigure}
  \begin{subfigure}[t]{0.48\textwidth}
    \centering
    \hspace*{\fill}%
    \includegraphics{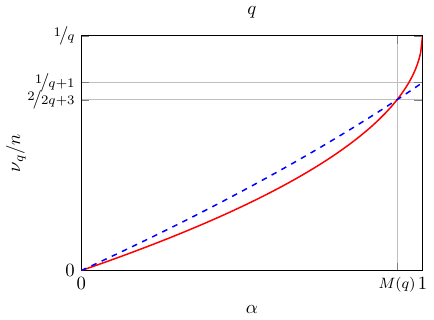}
  \end{subfigure}
  
  \caption{A visualisation of the asymptotics of $\nu_q/n$
  in terms of $\alpha=m_2/\binom{n}{2}$ for
  $q=1$ (the Erd\H{o}s--Gallai edge bound),
  $q=2$, $q=3$, and general $q$ (schematic).
  The solid, red line corresponds to the sparse regime ($\alpha\le M(q)$)---%
  this is the function $\mathfrak{s}_q$,
  while the dashed, blue line corresponds to the dense regime ($\alpha\ge M(q)$)---%
  this is the function $\mathfrak{d}_q$.
  Note that $\mathfrak{s}_q$ is convex,
  while $\mathfrak{d}_q$ is concave for $q\le 2$,
  linear for $q=3$,
  and convex for $q\ge 4$.}
  \label{fig:l2}
\end{figure}

Thus the guaranteed matching size is determined by the lower envelope of two curves;
see \cref{fig:l2}.

\paragraph{Counting other graphs}
A natural direction for future research on generalised Ramsey--Tur\'an problems for matchings
is to consider other enumerated graphs $T$ besides cliques.
We expect that the structural methodology of our proof would extend to this setting,
although our clique-counting arguments are quite delicate in places,
so it may be a challenge to extend these to general graphs.

\paragraph{Hypergraphs}
For the multicolour Ramsey number of matchings,
Alon, Frankl, and Lov\'asz~\cite{AFL86}
generalised the Cockayne--Lorimer Theorem to uniform hypergraphs.
In contrast, the corresponding Tur\'an problem for hypergraphs
(the famous Erdős Matching Conjecture)
is known for large $n$ but
still unresolved in general (see~\cite{FK22} for the current state of the art).
Similarly, generalised Tur\'an results for hypergraph matchings
are known~\cite{LW20} for large $n$ but open in general.
We are not aware of any work on (generalised) 
Ramsey--Tur\'an problems  for hypergraph matchings,
so this is another natural target for further research.
Our proof methodology relies fundamentally on the Gallai--Edmonds decomposition,
for which there is no known analogue for hypergraphs,
so we expect that new ideas and techniques will be required.

\bibliography{library}

\end{document}